\documentclass[preprint,12pt]{elsarticle}

\usepackage[margin=1in]{geometry}  
\usepackage{graphicx}              
\usepackage{amsmath}               
\usepackage{amsfonts}              
\usepackage{amssymb}
\usepackage{amsthm}                
\usepackage{mathtools}		
\usepackage{enumitem}		
\usepackage{hyperref}

\usepackage{xcolor}  
\usepackage{lmodern}

\newtheorem{thm}{Theorem}[section]
\newtheorem{lem}[thm]{Lemma}

\newtheorem{cor}[thm]{Corollary}

\theoremstyle{definition}
\newtheorem{defi}[thm]{Definition}
\newtheorem{exa}[thm]{Example}
\newtheorem{rem}[thm]{Remark}

\DeclareMathOperator{\id}{id}

\newcommand{\cdu}{\mathcal{C}_d^{\Pi_{d-1}}}

\newcommand{\cd}{\mathcal{C}_d}
\newcommand{\qd}{\mathcal{Q}_d}
\newcommand{\md}{\mathcal{M}_d}
\newcommand{\cds}{\hat{\mathcal{C}}_d}
\newcommand{\qds}{\hat{\mathcal{Q}}_d}
\newcommand{\xb}{\mathbf{x}}
\newcommand{\cdus}{\hat{\mathcal{C}}_d^{\Pi_{d-1}}}
\newcommand{\cdb}{\mathcal{C}_2}
\newcommand{\qdb}{\mathcal{Q}_2}
\newcommand{\cdbs}{\hat{\mathcal{C}}_2}

\providecommand{\FD}[1]{{\color{black}#1}}   

\begin{document}


\title{On families of copulas and quasi-copulas homeomorphic to the Hilbert cube
and related results}

\author[FD]{Fabrizio Durante}
\ead{fabrizio.durante@unisalento.it}
\author[JFS]{Juan Fern\'andez S\'anchez}
\ead{jfsjufesa@gmail.com}
\author[WT]{Wolfgang Trutschnig\corref{cor1}}
\ead{wolfgang@trutschnig.net}

\address[FD]{Dipartimento di Matematica e Fisica ``Ennio De Giorgi'', Università del Salento, 73100 Lecce, Italy}
\address[JFS]{Almería, Spain}
\address[WT]{Department for Artificial Intelligence and Human Interfaces, University of Salzburg, 5020 Salzburg, Austria}

\cortext[cor1]{Corresponding author}

\begin{abstract}
Working with tools from infinite-dimensional topology and using measure-theoretic ideas we analyze 
numerous well-known fami\-lies of copu\-las and quasi-copulas from a topological perspective, prove that most of them
are homeomorphic to the Hilbert cube $\mathcal{H}$ and hence have the Fixed Point as well as the Disjoint Cells property. Working with so-called $Z$-sets, we 
then study how large one set is within another.  
Doing so, in order to show that a set $A$ is small/topologically negligible 
within a compact metric space $(\Omega,m)$, we construct a homeomorphism 
$h: [0,1] \times \mathcal{H} \to \Omega$ mapping $\{0\} \times \mathcal{H}$ to $A$. 
The obtained results are to a certain extent surprising since various 
supposedly small families are topologically large.  \\ \\
\medskip
\noindent MSC 2020: 62H05, 54F65.
\end{abstract}

\begin{keyword}
Copula \sep Quasi-copula \sep Hilbert cube \sep topological size
\end{keyword}
\maketitle

\section{Introduction}
\FD{Copulas and quasi-copulas are by now standard tools in probability, statistics and aggregation theory (see \cite{DuSe,Nel}). Over the last decades a large number of subclasses of the family $\cd$ of $d$-dimensional copulas and of the family $\qd$ of $d$-dimensional quasi-copulas have been introduced and studied, each one characterised by some analytic, algebraic or probabilistic property.

A natural question is how ``large'' such a subclass is within the family it is naturally embedded in. Classically this question has been addressed in terms of Baire category: see, for instance, \cite{DurFerTru15JMAA,DurFerTru16FSS,DurFerTru16Demo,DuranteFernandezTrutschnigUbeda}.

Recently, a finer topological point of view has been introduced by working with tools from infinite-dimensional topology. Liu and Yang \cite{LiuYang2023} showed that the space $(\cd,d_\infty)$ is homeomorphic to the Hilbert cube $\mathcal{H}$; analogous results were obtained for Schur-concave copulas \cite{Liu2024a}, for the spaces of diagonals and opposite diagonals \cite{LiuLiuJiang2024}, and for various families of bivariate copulas in \cite{FST.l2}. By Keller's theorem, however, every infinite-dimensional compact convex subset of a normed space is homeomorphic to $\mathcal{H}$, so the mere fact that a family is homeomorphic to $\mathcal{H}$ does not say anything about its size relative to another one. What distinguishes a ``small'' subfamily $A$ from the ambient family $\Omega$ is the way in which $A$ sits inside $\Omega$. In this paper we measure this by means of $Z$-sets: a closed set $A\subseteq\Omega$ is a $Z$-set if $\Omega$ can be mapped into $\Omega\setminus A$ by continuous maps arbitrarily close to the identity. Every $Z$-set is nowhere dense, but not vice versa, so being a $Z$-set is a strong form of topological negligibility. Moreover, if $A\subseteq\Omega$ are both homeomorphic to $\mathcal{H}$ and $A$ is a $Z$-set of $\Omega$, the $Z$-set unknotting theorem yields a homeomorphism $h:[0,1]\times\mathcal{H}\to\Omega$ with $h(\{0\}\times\mathcal{H})=A$, i.e., the pair $(\Omega,A)$ is topologically equivalent to the pair formed by the Hilbert cube and one of its faces. Building on this observation, Theorem~\ref{thm:main.tool} provides a simple and flexible criterion for nested chains $A_1\subset\cdots\subset A_n$ of convex compact sets, which is the main tool of the paper.

In this work, we show that the families of diagonals and of convex diagonals, of copulas and quasi-copulas with given diagonal, of lower semilinear copulas, of biconic copulas, of diagonal and Bertino copulas, and of extreme-value copulas are all homeomorphic to $\mathcal{H}$ and are $Z$-sets in the relevant ambient family ($\cdb$, $\cdbs$, or $\qdb^{\delta}$). Moreover, for the class of  
weakly Schur-concave copulas, we prove that there is a homeomorphism $h:[0,1]^3\times\mathcal{H}\to\cdb$ mapping the corresponding faces onto $\cdb^{wsc}$, $\cdbs^{wsc}$ and $\cdb^{sc}$, respectively. This latter result clarifies the ambit of applicability of a previous result in \cite{Liu2024a}. Finally, in arbitrary dimension we give a short alternative proof of one of the main results in \cite{LiuYang2023}, show that $\cd$ is a $Z$-set of $\qd$, and study exchangeable copulas and quasi-copulas, copulas with independent $(d-1)$-dimensional marginals, spatially homogeneous copulas and $\alpha$-migrative copulas.

Some of these results are, at first glance, surprising. For example, the family of $d$-dimensional copulas all of whose $(d-1)$-dimensional marginals are independence copulas, or the family of extreme-value copulas (which is parametrised by a single convex function of one variable), are topologically as rich as the whole family $\cd$ (or $\cdb$), although they are negligible inside it in the sense of $Z$-sets. 

As by-products we obtain that all the families considered have the fixed point property and the disjoint-cells property, and that each of them is a retract of the ambient family.

The paper is organised as follows. Section~2 collects notation and the necessary tools from infinite-dimensional topology. Section~3 deals with bivariate families related to diagonal sections and to other univariate functions, including extreme-value and (weakly) Schur-concave copulas. Section~4 studies the full families $\cd$ and $\qd$, their exchangeable subclasses and copulas with independent $(d-1)$-dimensional marginals. Sections~5 and~6 are devoted to multivariate spatially homogeneous and $\alpha$-migrative copulas, respectively.}

\section{Notation and preliminaries}

Throughout the paper, $\mathcal{H}=[-1,1]^\mathbb{N}$ denotes the Hilbert cube, 
we will denote elements of $\mathcal{H}$ and vectors by bold symbols and write, e.g., 
$\mathbf{x}=(x_1,x_2,\ldots) \in \mathcal{H}$. Considering the metric $\rho$ on $\mathcal{H}$, given 
by $\rho(\mathbf{x},\mathbf{y})=\sum_{i=1}^\infty \frac{1}{2^i}\vert x_i-y_i \vert $ 
it is straightforward to verify that $\rho$ is a metrization of coordinate-wise convergence 
and that $(\mathcal{H},\rho)$ is a convex and compact metric space.  
The Hilbert cube $(\mathcal{H},\rho)$ is commonly viewed as `prototype' compact metric space, 
since every compact metric space $(\Omega,m)$ is homeomorphic to a compact subset 
of $(\mathcal{H},\rho)$; more generally, a metric space $(\Omega,m)$ is separable if and only if 
it can be embedded in $(\mathcal{H},\rho)$, see, e.g., \cite{vanMill1989}. 

For every metric space $(\Omega,m)$ the Borel $\sigma$-algebra on $\Omega$ 
will be denoted by $\mathcal{B}(\Omega)$ and we will write $\mathcal{P}(\Omega)$ 
for the family of all probability measures on $\mathcal{B}(\Omega)$. 
The Dirac measure at $x \in \Omega$ will be denoted by $\epsilon_x$, the 
indicator function of a set $A \subseteq \Omega$ by $\mathbf{1}_A$. 
We consistently let $d \geq 2$ denote the dimension, (random and scalar) vectors will be denoted 
in bold symbols. The $d$-dimensional Lebesgue measure on $[0,1]^d$ will be denoted by $\lambda_d$ 
(we simply write $\lambda$ for the one-dimensional Lebesgue measure). 

$\cd$ and $\qd$ will denote the family of all $d$-dimensional copulas and quasi-copulas, respectively.
$M$ denotes the minimum copula, $W \in \qd$ the Fr\'echet-Hoeffding lower bound, and $\Pi$ 
the product copula. 
As usual, $d_\infty$ denotes the uniform metric on $\cd$ and $\qd$, it is well known that 
$(\cd,d_\infty)$ and $(\qd,d_\infty)$ are convex and compact metric spaces.
Every copula $C \in \cd$ corresponds to a unique $d$-stochastic measure $\mu_C$ and vice versa, 
the family of all $d$-stochastic measures will be denoted by $\md$. 
We will call a copula $C \in \cd$ singular if its corresponding $d$-stochastic measure is singular
with respect to the Lebesgue measure $\lambda_d=\mu_\Pi$ on $[0,1]^d$. 
Using Lipschitz-continuity of copulas it is straightforward to verify that 
for arbitrary $C,C_1,C_2,\ldots \in \cd$ the following \FD{two} assertions are equivalent:
\begin{enumerate}
\item $\lim_{n \rightarrow \infty} d_\infty(C_n,C)=0$.
\item The sequence $(\mu_{C_n})_{n \in \mathbb{N}}$ converges weakly to $\mu_C$.
\end{enumerate} 
A quasi-copula $Q \in \qd$ is called exchangeable, if for every permutation $\kappa$ of 
$[d]\FD{:=}\{1,\ldots,d\}$ and every $\mathbf{x}=(x_1,\ldots,x_d) \in [0,1]^d$ we have 
$Q(x_1,x_2,\ldots,x_d)=Q(x_{\kappa(1)},x_{\kappa(2)},\ldots,x_{\kappa(d)})$.
We will let $\cds$ and $\qds$ denote the family of all exchangeable copulas and quasi-copulas, 
respectively (throughout the whole paper, every family wearing a hat only consists of 
exchangeable elements). For more background on copulas and quasi-copulas
we refer to \cite{DuSe,Nel}.

Obviously, $\cd$ and $\qd$ are subsets of the Banach space $(\FD{C([0,1]^d)}, \Vert \cdot \Vert_\infty)$
of all real-valued continuous functions on the unit cube $[0,1]^d$. 
Considering that $\cd$ and $\qd$ are convex but obviously not closed w.r.t. linear combinations, we 
will refer to a set $\{Q_1,\ldots,Q_n\}$ of pairwise different quasi-copulas as linearly independent, if 
$\{Q_1,\ldots,Q_n\}$ are linearly independent as elements in $(C([0,1]^d), \Vert \cdot \Vert_\infty)$, 
\FD{i.e.}, if $\sum_{i=1}^n a_i Q_i =\mathbf{0} \in C([0,1]^d)$ for 
some $a_1,\ldots,a_n \in \mathbb{R}$ implies
$a_1=a_2=\ldots=a_n=0$. 


For every $C \in \cd$ and every vector
$\mathbf{j} = (j_1, \ldots, j_l)$ of $l \le d-1$ distinct indices in $[d]$,
we will let $C^{\mathbf{j}}$ denote the marginal copula corresponding to
$\mathbf{j}$, i.e., the copula corresponding to the push-forward
$\mu_{\FD{C^{\mathbf{j}}}} = \mu_C^{\pi_{\mathbf{j}}}$ of $\mu_C$ via the projection 
$\pi_{\mathbf{j}}: [0,1]^d \to [0,1]^l$, given by 
$\pi_{\mathbf{j}}((x_1,\ldots,x_d))=(x_{j_1},\ldots,x_{j_l})$; 
$C^{1:l}$ denotes the marginal copula of the first $l$ coordinates - for $l=1$ obviously
$C^{1:l}$ is no copula, since we have $\mu_C^{\pi_1}=\lambda$, however, we will stick to 
the notation $C^{1:l}$ also in this case. $C^{-j_0}$ will denote 
the marginal copula of all coordinates except coordinate $j_0$.

We call a map 
$K: [0,1]^{m}\times\mathcal{B}([0,1]^l) \rightarrow [0,1]$ with $m,l\in \FD{[d-1]}$ and $m+l=d$, 
a \emph{Markov kernel} from $[0,1]^{m}$ to $[0,1]^l$, if the function 
$\mathbf{x}\mapsto K(\mathbf{x},E)$ is 
$\mathcal{B}([0,1]^{m})$-$\FD{\mathcal{B}([0,1])}$-measurable for every fixed 
$E\in\mathcal{B}([0,1]^l)$ and the map $E\mapsto K(\mathbf{x},E)$ is a probability measure 
on $\mathcal{B}([0,1]^l)$ for every $\mathbf{x}\in[0,1]^{m}$. 
Given \FD{$[0,1]^m$- and $[0,1]^l$-valued random vectors} $\mathbf{X}$ and $\mathbf{Y}$, respectively, 
on a joint probability space $(\Omega,\mathcal{A},\mathbb{P})$, a Markov kernel 
$K$ is called a regular conditional distribution of
$\mathbf{Y}$ given $\mathbf{X}$, if (and only if) for every set 
$E \in \mathcal{B}([0,1]^l)$ the identity 
$$
K(\mathbf{X}(\omega), E) = \mathbb{E}(\mathbf{1}_E \circ \FD{\mathbf{Y}} | \mathbf{X})(\omega)
$$
holds for $\mathbb{P}$-almost every $\omega \in \Omega$.
It is a well-known fact (see \cite{Kallenberg}) that for each pair $(\mathbf{X}, \mathbf{Y})$ 
such a regular conditional distribution $K$ of $\mathbf{Y}$ given $\mathbf{X}$ exists and that it is 
unique for $\mathbb{P}^{\mathbf{X}}$-almost every $\mathbf{x} \in [0,1]^m$.
For $(\mathbf{X}, \mathbf{Y}) \sim C$ (i.e., if $(\mathbf{X}, \mathbf{Y})$ has copula 
$C$ as distribution function restricted to $[0,1]^d$) we will let 
$K_{C}:[0,1]^{m} \times \mathcal{B}([0,1]^l) \to [0,1]$ denote (a version of) the corresponding 
conditional distribution of $\mathbf{Y}$ given $\mathbf{X}$; $K_C$ will simply be referred to as 
(a version of) the $m$-to-$l$ Markov kernel of the copula $C$. \\
For every $G \FD{\in \mathcal{B}([0,1]^d)}$ and $\mathbf{x} \in [0,1]^{m}$ 
define the $\mathbf{x}$-section $G_\mathbf{x}$ of $G$ by $G_{\mathbf{x}}:=\{y\in [0,1]^l: 
(\mathbf{x},\mathbf{y}) \in G\}\in\mathcal{B}([0,1]^{l})$. 
Applying \emph{disintegration} of $\mu_C$ into the marginal $\mu_{C^{1:m}}$ and the 
$m$-to-$l$-Markov kernel $K_C$ of $C$
(see \cite[Section 5]{Kallenberg}\FD{)} the following identity 
holds for all $G \in \mathcal{B}([0,1]^d)$:
\begin{align}\label{eq:DI}
	\mu_C(G) = \int_{\FD{[0,1]}^{m}} K_{C}(\mathbf{x},G_{\mathbf{x}})
	\, \mathrm{d}\mu_{C^{1:m}}(\mathbf{x}).
\end{align}
In accordance with \cite{GJT} we will call a copula $C \in \cd$ \emph{completely dependent} 
(on the first $(d-1)$ coordinates), if
there exists some $\mu_{C^{-d}}$-$\lambda$ preserving transformation $g$ (i.e., a measurable
transformation fulfilling that the push-forward $(\mu_{C^{-d}})^g$ of $\mu_{C^{-d}}$ via $g$ 
coincides with $\lambda$) such that
$K(\mathbf{x},F):=\mathbf{1}_F(g(\mathbf{x}))$ is a regular conditional distribution of $C$. 
For $(\mathbf{X},Y)\sim C$ we have that complete dependence of $C$ is equivalent to the
existence of some $\mu_{C^{-d}}$-$\lambda$ preserving transformation $g$ such that
$Y=g(\mathbf{X})$ almost surely. It is straightforward to verify (see \cite{GJT}) 
that $C \in \cd$ is completely dependent if and only if there exists some 
$\mu_{C^{-d}}$-$\lambda$-preserving transformation $g: [0,1]^{d-1} \to [0,1]$ 
such that the graph 
$$
\Gamma(g)=\{(x_1,\ldots,x_{d-1},g((x_1,\ldots,x_{d-1}))):\, x_1,\ldots,x_{d-1} \in [0,1] \}
$$ 
has full $C$-measure, i.e., if $\mu_C(\Gamma(g))=1$ holds. \\
Note that in the case $m=1,l=d-1$ we have $\mu_{C^{1:m}}=\lambda$, so the integral in 
\eqref{eq:DI} boils down to an integral with respect to $\lambda$. 
Working with $1$-$(d-1)$ kernels allows to define the metric $D_1$ on $\cd$ 
by (for $\mathbf{y}=(y_1,\ldots,y_{d-1}) \in [0,1]^{d-1}$ we write 
$[\mathbf{0},\mathbf{y}] = \times_{i=1}^{d-1}[0,y_i]$)
\begin{equation}\label{eq:D1multi}
D_1(C_1,C_2) = \int_{[0,1]}\int_{[0,1]^{d-1}} \vert K_{C_1}(x,[\mathbf{0},\mathbf{y}]) 
- K_{C_2}(x,[\mathbf{0},\mathbf{y}])\vert \, d\lambda_{d-1}(\mathbf{y})\, d\lambda(x).
\end{equation}
According to \cite{D1multi}, the metric space $(\cd,D_1)$ is separable and complete and for 
copulas $C,C_1,C_2,\ldots$ with $1$-to-$(d-1)$ kernels $K_{C},K_{C_1},K_{C_2},\ldots$ we have
that weak convergence of the probability measures $(K_{C_n}(x,\cdot))_{n \in \mathbb{N}}$ 
to $K_{C}(x,\cdot)$ for $\lambda$-almost every $x \in [0,1]$ implies $D_1$-convergence of 
$(C_n)_{n \in \mathbb{N}}$ to $C$, which in turn implies 
$\lim_{n \to \infty} d_\infty(C_n,C)\FD{=0}$.

In the sequel we will study the interrelation between numerous closed subclasses of $\cd$ and 
$\qd$, and show that many of them are homeomorphic to the Hilbert cube $\mathcal{H}$. 
For the sake of readability, the families are defined in the respective sections.  
One construction we will frequently use is that of an ordinal sum of copulas/quasi-copulas.  
For the general construction we refer to \cite{DuSe,FNU,MS}, here we only recall the ordinal sum of 
two copulas/quasi-copulas: Considering $Q_1,Q_2 \in \qd$ and an arbitrary $a \in \FD{(0,1)}$, define
the ordinal sum $\mathcal{OS}_a(Q_1,Q_2)$ of $Q_1$ and $Q_2$ with respect to the partition 
$[0,a],(a,1]$ induced by $a$ as follows: 

\begin{align}\label{eq:os}
\mathcal{OS}_a(Q_1,Q_2)(\mathbf{u}) = 
\begin{cases}
a\, Q_1\!\left(\tfrac{\min(u_1,a)}{a}, \ldots, \tfrac{\min(u_d,a)}{a}\right), & M(\mathbf{u}) \in [0,a], \\[3mm]
a + (1-a)\, Q_2\!\left(\tfrac{u_1 - a}{1-a}, \ldots, \tfrac{u_d - a}{1-a}\right), & M(\mathbf{u}) \in (a,1]
\end{cases}
\end{align}


It is well-known (and straightforward to verify, see \cite{FNU}) that for $Q_1,Q_2 \in \qd$ also the ordinal sum 
fulfills 
$\mathcal{OS}_a(Q_1,Q_2) \in \qd$ and that for $C_1,C_2 \in \cd$ we have
$\mathcal{OS}_a(C_1,C_2) \in \cd$. Moreover, if either $Q_1$ or $Q_2$ is a \FD{proper} quasi-copula
and $a \in (0,1)$, then $\mathcal{OS}_a(Q_1,Q_2) \in \qd \setminus \cd$, i.e., 
$\mathcal{OS}_a(Q_1,Q_2)$ is a \FD{proper} quasi-copula \FD{(i.e., an element of $\qd\setminus\cd$)}. 

Analogous to \cite{Liu2024a,LiuYang2023,LiuLiuJiang2024} we will work with tools from 
infinite-dimensional topology to establish our main results. 
For the sake of completeness we here recall the main notions
and some results used in the sequel, for more background we refer to the textbooks \cite{vanMill1989} 
and \cite{Sakai2020}.   

\begin{defi}
Let $(\Omega,m)$ be a metric space and $A$ a closed subset of $\Omega$. Then $A$ is called a
\emph{retract} \FD{of $\Omega$} if there exists a continuous mapping $r: \Omega \rightarrow A$ that has 
each $x \in A$ as fixed point. Such a mapping $r$ is called a \emph{retraction}. \\
\FD{Moreover}, $(\Omega,m)$ is an \emph{absolute retract} (AR, for short), if it is a
retract of every metric space containing $\Omega$ as a closed subset.
\end{defi}
\noindent It is well known (and not hard to verify) 
that every retract of an AR is an AR (see, e.g., \cite{Hu1965})\FD{.}

\begin{defi}
A subset $A$ of $(\Omega,m)$ is a called a \emph{$Z$-set} if it is closed and for every
continuous mapping $\varepsilon: \Omega \rightarrow (0,1)$ there exists some continuous function 
$f: \Omega \rightarrow \Omega\setminus A$ such that $m(x,f(x)) < \varepsilon(x)$ holds for all
$x \in \Omega$. 
\end{defi}
In the case $(\Omega,m)$ is compact, it obviously suffices to consider $\varepsilon$ constant.
We interpret $Z$-sets as topologically negligible (and hence topologically small) sets. 
It is straightforward to verify that every $Z$-set in $(\Omega,m)$ is nowhere dense (and hence of 
first Baire category) in $(\Omega,m)$. 
\FD{Moreover}, every singleton in $\mathcal{H}$ is a $Z$-set and 
that for every fixed $i \in \mathbb{{N}}$
so is the set $\{\xb \in \mathcal{H}: x_i \in \{-1,1\}\}$.  

Homeomorphisms between $Z$-sets in $\mathcal{H}$ are extendable - the following result holds (see \cite{Sakai2020,vanMill1989} and the original references therein):

\begin{thm}[$Z$-Set Unknotting]\label{thm:unknot}
Let $A$ and $B$ be $Z$-sets of $\mathcal{H}$ and $h : A \to B$ be a
homeomorphism. Then $h$ is extendable to $\mathcal{H}$, i.e., there exists 
some homeomorphism $h^*: \mathcal{H} \to \mathcal{H}$ which coincides with $h$ on $A$. 
\end{thm}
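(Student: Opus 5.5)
The plan is the classical route via Klee's trick for sets of \emph{infinite deficiency}, combined with a convergence argument that puts arbitrary $Z$-sets into such position. Call a closed set $A\subseteq\mathcal{H}$ of infinite deficiency if there is an infinite set $N\subseteq\mathbb{N}$ such that the projection of $A$ onto every coordinate $i\in N$ is the single point $\{0\}$. The proof would be carried out in two stages.

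\textbf{Stage 1 (Klee's trick).} Let $A$ and $B$ both have infinite deficiency. After a permutation of coordinates, which is a homeomorphism of $\mathcal{H}$, I may write $\mathcal{H}=\mathcal{H}_1\times\mathcal{H}_2$ with $\mathcal{H}_1\cong\mathcal{H}_2\cong\mathcal{H}$ and $A,B\subseteq\mathcal{H}_1\times\{\mathbf{0}\}$. To avoid boundary problems I would use a further splitting so that both $A$ and $B$ lie in $\mathcal{H}_1\times\{\mathbf{0}\}$ with room to spare.
\begin{itemize}
\item By the Dugundji extension theorem, applied coordinatewise with a Tietze-type extension, extend $h:A\to B\subseteq\mathcal{H}_1$ to a continuous map $\tilde h:\mathcal{H}_1\to\mathcal{H}_2$ after identifying $\mathcal{H}_1$ with $\mathcal{H}_2$. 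Likewise extend $h^{-1}$ to a continuous map $\tilde g$.
\item Form the shear homeomorphisms $\varphi(\mathbf{x},\mathbf{y})=(\mathbf{x},\mathbf{y}\oplus\tilde h(\mathbf{x}))$ and $\psi(\mathbf{x},\mathbf{y})=(\mathbf{x}\oplus\tilde g(\mathbf{y}),\mathbf{y})$. Here $\oplus$ is a coordinatewise "addition" homeomorphism of $[-1,1]$ fixing the endpoints, for instance conjugate addition on $(-1,1)$ via $\tanh$, extended continuously, applied to the $\mathbf{0}$-based coordinates.
\item The composite $\psi^{-1}\circ\varphi$-type combination sends $(a,\mathbf{0})\mapsto(a,h(a))\mapsto(\mathbf{0},h(a))$. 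A coordinate swap then finishes the extension of $h$.
\end{itemize}
The care needed is in the "addition" on $[-1,1]$. I would handle it by working with the $\tanh$-conjugated group structure on $(-1,1)$, checking continuity at the boundary, and using the deficiency coordinates to keep everything in the interior.

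\textbf{Stage 2 (moving $Z$-sets to infinite deficiency).} This is the main obstacle. Given a $Z$-set $A$, I would construct a homeomorphism $\Phi$ of $\mathcal{H}$ with $\Phi(A)$ of infinite deficiency.
\begin{itemize}
\item Choose a sequence $i_1<i_2<\cdots$ of coordinates.
\item Inductively build homeomorphisms $\Phi_n=f_n\circ\cdots\circ f_1$. Each $f_n$ is $\varepsilon_n$-close to the identity, leaves the already flattened coordinates $i_1,\ldots,i_{n-1}$ of $\Phi_{n-1}(A)$ unchanged, and makes coordinate $i_n$ of $\Phi_n(A)$ constant.
\item Each $f_n$ comes from the $Z$-set property: a map $\mathcal{H}\to\mathcal{H}\setminus A$ close to the identity shows that $A$ can be pushed off a neighbourhood. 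Because the tail coordinates have small weight in $\rho$, I would realise the push by a fibrewise homeomorphism acting essentially on coordinate $i_n$ and far tail coordinates. This is the standard "infinite codimension" argument.
\end{itemize}
The delicate point is Anderson's convergence criterion. The $\varepsilon_n$ must be chosen so small relative to the moduli of $\Phi_{n-1}^{-1}$ that $\Phi_n$ converges uniformly to a homeomorphism. Concretely, I would require $\varepsilon_n<2^{-n}$ and $\varepsilon_n<\tfrac{1}{n}\inf\{\rho(\Phi_{n-1}(x),\Phi_{n-1}(y)):\rho(x,y)\ge 1/n\}$. Injectivity of the limit then follows by the usual Cauchy/equicontinuity argument, and closedness and constancy of the flattened coordinates pass to the limit.

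Finally, given $Z$-sets $A,B$ and a homeomorphism $h:A\to B$, apply Stage 2 to get homeomorphisms $\Phi,\Psi$ of $\mathcal{H}$ with $\Phi(A)$ and $\Psi(B)$ of infinite deficiency. Next, apply Stage 1 to $\Psi\circ h\circ\Phi^{-1}:\Phi(A)\to\Psi(B)$, obtaining an extension $k$ to all of $\mathcal{H}$. Then $h^*=\Psi^{-1}\circ k\circ\Phi$ is a homeomorphism of $\mathcal{H}$ extending $h$.
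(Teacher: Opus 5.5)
The paper does not prove this theorem; it quotes it from the textbooks of van Mill and Sakai. Your two-stage architecture (extension for sets of infinite deficiency by Klee's trick, then moving $Z$-sets into infinite deficiency) is the classical one. As written, however, both stages have genuine gaps.

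In Stage~1 the coordinatewise `addition' cannot work. The $\tanh$-conjugated operation is $u\oplus v=\frac{u+v}{1+uv}$. For fixed $v\in(-1,1)$ the map $u\mapsto u\oplus v$ is a homeomorphism of $[-1,1]$. But $u\oplus 1=1$ for every $u\neq-1$, and $\oplus$ has no continuous extension at $(1,-1)$: the limit along $u=1$ is $1$, along $v=-1$ it is $-1$. Hence $\mathbf{y}\mapsto\mathbf{y}\oplus\mathbf{v}$ is a homeomorphism only for $\mathbf{v}\in(-1,1)^{\mathbb{N}}$. Consequently $\varphi$ (resp.\ $\psi$) fails to be a homeomorphism as soon as some $h(a)$ (resp.\ some $a$) has a coordinate equal to $\pm1$, because it collapses that coordinate in the fibre over $a$ (resp.\ over $h(a)$). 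Infinite deficiency does not exclude this: it pins only the $\mathcal{H}_2$-coordinates of $A$ and $B$, while $a$ and $h(a)$ are arbitrary in the $\mathcal{H}_1$-coordinates, and `room to spare' changes nothing. No other choice of $\oplus$ helps either. By invariance of domain no homeomorphism of a finite-dimensional cube moves an interior point to a boundary point, so no product of finite-dimensional homeomorphisms sends $\mathbf{0}$ to a point with a coordinate $\pm1$. A standard remedy runs as follows.
\begin{itemize}
\item Pin the deficiency coordinates at an endpoint, say $1$, and pair each free coordinate with a pinned one.
\item Replace translation by a continuous family $t\mapsto\alpha_t$ ($t\in[-1,1]$) of homeomorphisms of $[-1,1]^2$ sliding the boundary point $(1,1)$ along the edge $[-1,1]\times\{1\}$ to $(t,1)$, e.g.\ conjugated disc rotations.
\item Then $\mathbf{p}\mapsto\prod_i\alpha_{p_i}$ moves the base point $((1,1),(1,1),\ldots)$ to $((p_1,1),(p_2,1),\ldots)$ for every $\mathbf{p}\in\mathcal{H}$, jointly continuously together with the inverses, and the resulting Klee shears are homeomorphisms.
\end{itemize}

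Stage~2 is the genuinely hard part of the theorem, and your sketch does not contain the idea that makes it work. The $Z$-set property gives maps $\mathcal{H}\to\mathcal{H}\setminus A$, not homeomorphisms, and you do not say how $f_n$ is obtained from it. As literally described the step fails. Let $S:=\{i_n\}\cup T$ with $T$ a set of tail coordinates. A homeomorphism changing only the coordinates in $S$ leaves the others unchanged, and so does its inverse, so it maps every fibre $\{\mathbf{z}\}\times[-1,1]^S$ onto itself. If some fibre of $\Phi_{n-1}(A)$ is the whole cube $[-1,1]^S$, coordinate $i_n$ cannot become constant. This already happens for $n=1$ and the $Z$-set $A=\{\mathbf{x}: x_j=1\}$ with $j\notin S$. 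So the $f_n$ must also move the remaining coordinates by small amounts, and producing such small homeomorphisms from the $Z$-set property is exactly what is missing. Your Stage~1 cannot supply them, because Klee's composite (which involves swapping $\mathcal{H}_1$ and $\mathcal{H}_2$) is in general far from the identity.

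Two smaller points. First, set $\delta_k:=\inf\{\rho(\Phi_{k-1}(\mathbf{x}),\Phi_{k-1}(\mathbf{y})):\rho(\mathbf{x},\mathbf{y})\ge 1/k\}$. Your conditions $\varepsilon_k<\min\{2^{-k},\delta_k/k\}$ do not force $2\sum_{k\ge n}\varepsilon_k<\delta_n$, which is what the usual injectivity argument for the limit needs; add e.g.\ $\varepsilon_{k+1}\le\varepsilon_k/2$ and $\varepsilon_k<\delta_k/4$. Second, apply Stage~2 to $A$ and $B$ with the same sequence $(i_n)$, so that $\Phi(A)$ and $\Psi(B)$ share the deficiency coordinates that Stage~1 presupposes.
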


\begin{defi}
We say that a metric space $(\Omega,m)$ has the \emph{disjoint-cells property} (DC property, for short) 
if for every $n \in \mathbb{N}$, every continuous
function $f: [0,1]^n \times \{0,1\} \to \Omega$, and every $\varepsilon > 0$, there exists a 
continuous function $g: [0,1]^n \times \{0,1\} \to \Omega$, such that
$$
\sup\{m(f(\xb),g(\xb)):\, \xb \in [0,1]^n \times \{0,1\} \} < \varepsilon \quad \text{and} \quad 
g([0,1]^n \times \{0\}) \cap g([0,1]^n \times \{1\}) = \emptyset.
$$
\end{defi}

The following simple lemma states a sufficient condition for the DC property to hold for compact 
metric spaces:
\begin{lem}
Suppose that $(\Omega,m)$ is a compact metric space. If for every $\varepsilon > 0$ there
exist continuous functions $f,g : \Omega \to \Omega$ such that $f(\Omega)$ 
and $g(\Omega)$ are
disjoint and $m(f(x),x) < \varepsilon$, $m(g(x),x) < \varepsilon$ for all
$x \in \Omega$, then $(\Omega,m)$ has the DC property.
\end{lem}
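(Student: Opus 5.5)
Given $n\in\mathbb{N}$, a continuous $\varphi:[0,1]^n\times\{0,1\}\to\Omega$ and $\varepsilon>0$, we use the hypothesis with $\varepsilon$ itself. This yields continuous maps $f,g:\Omega\to\Omega$ with disjoint images, and each moves every point by less than $\varepsilon$. We then define the new map $\psi:[0,1]^n\times\{0,1\}\to\Omega$ by
\[
\psi(\mathbf{x},0)=f(\varphi(\mathbf{x},0)),\qquad \psi(\mathbf{x},1)=g(\varphi(\mathbf{x},1)).
\]
The set $[0,1]^n\times\{0,1\}$ is the disjoint union of the two clopen pieces $[0,1]^n\times\{0\}$ and $[0,1]^n\times\{1\}$. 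On each piece $\psi$ is a composition of continuous maps, so $\psi$ is continuous.

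Next we check the two requirements of the DC property. Disjointness is immediate: $\psi([0,1]^n\times\{0\})\subseteq f(\Omega)$ and $\psi([0,1]^n\times\{1\})\subseteq g(\Omega)$, and these two images are disjoint by assumption. For closeness, every point $\mathbf{z}$ of the domain satisfies $m(\varphi(\mathbf{z}),\psi(\mathbf{z}))<\varepsilon$, because $\psi(\mathbf{z})$ is $f$ or $g$ applied to $\varphi(\mathbf{z})$.

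The only subtle point is that the definition asks for the \emph{supremum} of these distances to be strictly below $\varepsilon$, whereas we only have a pointwise strict bound. This is where compactness is used. The function $\mathbf{z}\mapsto m(\varphi(\mathbf{z}),\psi(\mathbf{z}))$ is continuous on the compact set $[0,1]^n\times\{0,1\}$, so it attains its maximum, and that maximum is $<\varepsilon$. Alternatively, one can avoid this step by applying the hypothesis with $\varepsilon/2$, which gives $\sup\le\varepsilon/2<\varepsilon$ directly.

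Compactness of $\Omega$ itself is not really needed beyond this remark; it matters only because the hypothesis is phrased with a constant $\varepsilon$. There is no genuine obstacle here: the proof is a direct composition argument.
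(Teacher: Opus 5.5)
Your composition argument is correct and complete: $\psi$ is $f\circ\varphi$ on one clopen half and $g\circ\varphi$ on the other, and either the $\varepsilon/2$ trick or the maximum of a continuous function on the compact set $[0,1]^n\times\{0,1\}$ gives the strict bound on the supremum. The paper states this lemma without proof, and yours is the evident intended argument; note only that the compactness you actually use is that of the domain $[0,1]^n\times\{0,1\}$, and compactness of $\Omega$ plays no role in this implication under the paper's constant-$\varepsilon$ definition of the DC property.
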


The last `ingredient' we need is the so-called small inductive dimension, denoted by `ind'
and defined recursively. 
\begin{defi}[Small inductive dimension]
Let $(\Omega,m)$ be an arbitrary metric space. Then the small inductive dimension 
$\operatorname{ind}(\Omega) \in \{-1,0,1,2,\ldots\} \cup \{\infty\}$ of $\Omega$ 
is defined as follows:  
$\operatorname{ind}\emptyset = -1$. 
$\Omega\FD{\neq\emptyset}$ has small inductive dimension $0$ if there exists a base of open sets consisting of
clopen sets; in this case $\operatorname{ind}\Omega = 0$. For a positive integer
$k$, $\operatorname{ind}\Omega \le k$ if there exists a base $\mathcal{U}$ of open sets of
$\Omega$ with $\operatorname{ind}\partial U \le k-1$ for every $U \in \mathcal{U}$. 
We say $\operatorname{ind}\Omega = k$ if $\operatorname{ind}\Omega \le k$ and
\FD{$\operatorname{ind}\Omega \le k-1$ does not hold}. If \FD{there is no $k\in\{-1,0,1,\ldots\}$ with $\operatorname{ind}\Omega\le k$}, we set $\operatorname{ind}\Omega := \infty$.
\end{defi}
It is well known that the small inductive dimension is monotonic and invariant under homeomorphisms\FD{;} the following lemma holds (see \cite{Edgar2008,HurewiczWallman1941}):
\begin{lem}\label{lem:ind.mono}
If $(\Omega_1,m_1)$ and $(\Omega_2,m_2)$ are homeomorphic, then $\operatorname{ind}\Omega_1 = \operatorname{ind}\Omega_2$. If $\Omega_1 \subseteq \Omega_2$, then 
$\operatorname{ind}\Omega_1 \leq \operatorname{ind}\Omega_2$. 
Moreover, for every \FD{$n\in\mathbb{N}$} we have \FD{$\operatorname{ind}\mathbb{R}^n = n$}. 
\end{lem}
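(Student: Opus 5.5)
The statement collects three classical facts: invariance, monotonicity, and $\operatorname{ind}\mathbb{R}^n=n$. I would prove the first two directly from the recursive definition by induction on $k$. The third I would reduce to two standard theorems of dimension theory for separable metric spaces, as presented in \cite{Edgar2008,HurewiczWallman1941}.

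\emph{Invariance.} I would show by induction on $k\ge -1$ that $\operatorname{ind}\Omega_1\le k$ implies $\operatorname{ind}\Omega_2\le k$ whenever $h:\Omega_1\to\Omega_2$ is a homeomorphism. The case $k=-1$ is trivial, since $\Omega_1=\emptyset$ if and only if $\Omega_2=\emptyset$. For the inductive step, if $\mathcal{U}$ is a base of $\Omega_1$ with $\operatorname{ind}\partial U\le k-1$ for all $U\in\mathcal{U}$, then $\{h(U):U\in\mathcal{U}\}$ is a base of $\Omega_2$. Moreover $\partial h(U)=h(\partial U)$, and $h$ restricts to a homeomorphism $\partial U\to\partial h(U)$, so the induction hypothesis gives $\operatorname{ind}\partial h(U)\le k-1$. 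Applying this to $h$ and $h^{-1}$ yields equality of the dimensions, including the value $\infty$.

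\emph{Monotonicity.} I would prove, again by induction on $k$, that $\operatorname{ind}\Omega_2\le k$ implies $\operatorname{ind}A\le k$ for \emph{every} subspace $A\subseteq\Omega_2$. Quantifying over all subspaces is what makes the induction close. Given a base $\mathcal{U}$ of $\Omega_2$ as in the definition, the sets $U\cap A$ form a base of $A$. Since the closure in $A$ of $U\cap A$ lies in $\overline{U}\cap A$, and $U\cap A$ is open in $A$, we get
\[
\partial_A(U\cap A)\subseteq(\overline{U}\setminus U)\cap A=\partial U\cap A .
\]
This set is a subspace of $\partial U$, which has $\operatorname{ind}\le k-1$, so the induction hypothesis gives $\operatorname{ind}\partial_A(U\cap A)\le k-1$. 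When $k=0$ the same computation shows $\partial_A(U\cap A)=\emptyset$, i.e.\ clopen sets restrict to clopen sets.

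\emph{Upper bound $\operatorname{ind}\mathbb{R}^n\le n$.} I would argue by induction on $n$. The case $n=0$ (a point) is trivial, and $n=1$ follows from the base of open intervals, whose boundaries are two-point sets of dimension $0$. For general $n$ I take the base of open cubes. The boundary of a cube is a finite union of closed faces, each homeomorphic to $[0,1]^{n-1}\subseteq\mathbb{R}^{n-1}$, so each face has $\operatorname{ind}\le n-1$ by invariance, monotonicity and the induction hypothesis. To conclude that the union also has $\operatorname{ind}\le n-1$, I would invoke the countable closed sum theorem for separable metric spaces. This is where the classical theory is used: via the coincidence $\operatorname{ind}=\operatorname{Ind}=\dim$ on separable metric spaces, or directly as in \cite{HurewiczWallman1941}.

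\emph{Lower bound $\operatorname{ind}\mathbb{R}^n\ge n$.} This is the main obstacle, because it is genuinely topological: it is equivalent to Brouwer's invariance of dimension. My plan is to use the equality $\operatorname{ind}=\dim$ for separable metric spaces together with the Lebesgue covering theorem. That theorem says every finite closed cover of $[0,1]^n$ with sufficiently small mesh has order at least $n+1$, which I would prove combinatorially via Sperner's lemma. It gives $\dim[0,1]^n\ge n$, hence $\operatorname{ind}[0,1]^n\ge n$, and monotonicity transfers this to $\mathbb{R}^n$. Combined with the upper bound this yields $\operatorname{ind}\mathbb{R}^n=n$. Since these ingredients are textbook results, in the paper I would state them with references rather than reprove them.
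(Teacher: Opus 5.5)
Your proposal is correct and consistent with the paper, which gives no argument of its own for this lemma and simply refers to the textbooks of Edgar and Hurewicz--Wallman. Your inductive proofs of invariance and monotonicity (quantifying over all subspaces) are the standard ones, and the deep inputs you defer to the literature (the closed sum theorem, $\operatorname{ind}=\dim$ on separable metric spaces, and the Lebesgue covering theorem) are exactly what those references supply.
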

\begin{defi}[Fixed Point Property]
We say that a metric space $(\Omega,m)$ has the Fixed Point property (FP property, for short) if every continuous mapping $f: \Omega \to \Omega$
has at least one fixed point.
\end{defi}
It is well-known that $(\mathcal{H},\rho)$ has the FP property and so has
every metric space $(\Omega,m)$ being homeomorphic to $(\mathcal{H},\rho)$.

The following two theorems are fundamental and underline the importance of the Hilbert cube 
(see \cite{Sakai2020} and the references therein).
\begin{thm}[Toru\'nczyk characterization]\label{thm:torun}
A metric space $(\Omega,m)$ is homeomorphic to $\mathcal{H}$ if and only if it is compact,
an \FD{AR}, and has the DC property.
\end{thm}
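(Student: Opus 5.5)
We would not reprove this classical theorem in full; in the paper it is quoted from \cite{Sakai2020}. The following outlines the standard argument we would reproduce if a self-contained account were needed. It splits into the easy necessity part and the deep sufficiency part.

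\emph{Necessity.} Suppose $(\Omega,m)$ is homeomorphic to $(\mathcal{H},\rho)$. All three properties are topological, so it suffices to check them for $\mathcal{H}$.
\begin{enumerate}[label=(\roman*)]
\item Compactness of $\mathcal{H}$ is Tychonoff's theorem.
\item $\mathcal{H}$ is an AR by Dugundji's extension theorem. It is a convex subset of the locally convex space $\mathbb{R}^\mathbb{N}$, so every continuous map from a closed subset of a metric space into $\mathcal{H}$ extends continuously. Applied to the identity on $\mathcal{H}$, viewed as a closed subset of a larger metric space, this gives the required retraction.
\item For the DC property we use the criterion of the lemma preceding the definition of ind. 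Given $\varepsilon>0$, choose $k$ with $2^{-k}<\varepsilon$. Let $f$ (respectively $g$) replace the $k$-th coordinate of $\xb$ by $-1$ (respectively $+1$). Then $\rho(f(\xb),\xb)\le 2\cdot 2^{-k}$ and likewise for $g$, so both maps are uniformly close to the identity after adjusting $k$. Their images are disjoint, since they differ in coordinate $k$.
\end{enumerate}

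\emph{Sufficiency.} Now let $\Omega$ be a compact AR with the DC property. We would proceed in three steps.
\begin{enumerate}[label=(\arabic*)]
\item Invoke Edwards' theorem: for every compact AR $\Omega$, the product $\Omega\times\mathcal{H}$ is homeomorphic to $\mathcal{H}$. This is itself deep; it is proved by writing $\Omega$ as an inverse limit of compact polyhedra (via West's/Miller's cell-like resolution) and applying Chapman's/Anderson's theory of $\mathcal{H}$-manifolds.
\item With $\Omega\times\mathcal{H}\cong\mathcal{H}$ in hand, it suffices to show that the projection $p:\Omega\times\mathcal{H}\to\Omega$ is a near-homeomorphism, i.e.\ a uniform limit of homeomorphisms. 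By compactness this forces $\Omega\cong\Omega\times\mathcal{H}\cong\mathcal{H}$. The tool is Bing's shrinking criterion. It requires that, for every $\varepsilon>0$, there be a homeomorphism $h$ of $\Omega\times\mathcal{H}$ with $p\circ h$ $\varepsilon$-close to $p$ and such that every fibre $h(p^{-1}(x))$ has diameter less than $\varepsilon$.
\item Verify Bing's criterion. The DC property, iterated and combined with compactness and the AR property (via a standard partition-of-unity/nerve argument), shows that the identity of $\Omega$ is approximable by maps whose images are $Z$-sets. It further shows that maps $[0,1]^n\to\Omega$ can be approximated by $Z$-embeddings, yielding the discrete approximation property. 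Using these $Z$-maps and the $Z$-set unknotting theorem (Theorem~\ref{thm:unknot}) in $\Omega\times\mathcal{H}\cong\mathcal{H}$, we would then shrink the fibres $\{x\}\times\mathcal{H}$ step by step while moving points only slightly in the $\Omega$-direction.
\end{enumerate}

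\emph{Main obstacle.} We expect step (3) to be the hardest: passing from the purely "finite-dimensional" disjoint-cells condition to a controlled shrinking of the infinite-dimensional fibres. This is precisely the content of Toruńczyk's work. A plausible first attempt is to show directly that $\Omega$ satisfies Toruńczyk's strong discrete approximation property, and then quote his characterisation of $\mathcal{H}$-manifolds; a compact, contractible $\mathcal{H}$-manifold is homeomorphic to $\mathcal{H}$.
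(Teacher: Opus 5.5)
Your approach matches the paper's: the paper gives no proof and simply quotes Toru\'nczyk's characterization from Sakai's monograph, and your proof of the easy direction is correct (compactness, the AR property via Dugundji's extension theorem, and the DC property via the two face maps together with the lemma preceding the definition of $\operatorname{ind}$). The sufficiency part is only a roadmap of the standard argument, not a proof: Edwards' theorem $\Omega\times\mathcal{H}\cong\mathcal{H}$ followed by Bing shrinking of the cell-like projection $\Omega\times\mathcal{H}\to\Omega$ using the DC property. That is appropriate, since this direction is cited rather than proved in the paper as well.
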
 
\begin{thm}[Keller--Dobrowolski--Toru\'nczyk]\label{thm:keller}
Let $A$ be a convex subset of a normed space $(N,\Vert \cdot \Vert)$. Then $A$ is homeomorphic to
$\mathcal{H}$ if and only if $A$ is compact and $\operatorname{ind}A=\infty$. 
\end{thm}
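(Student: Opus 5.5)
The plan is to prove both directions separately. The sufficiency direction goes through Toru\'nczyk's characterization (Theorem~\ref{thm:torun}).

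\emph{Necessity.} If $A$ is homeomorphic to $\mathcal{H}$, then $A$ is compact because $\mathcal{H}$ is. For every $n\in\mathbb{N}$ the cube $\mathcal{H}$ contains the closed set $[-1,1]^n\times\{0\}^{\mathbb{N}}$. Its interior in $\mathbb{R}^n$ is an open cube, which is homeomorphic to $\mathbb{R}^n$. By Lemma~\ref{lem:ind.mono} (monotonicity and homeomorphism invariance) we get $\operatorname{ind}\mathcal{H}\geq n$ for every $n$. Hence $\operatorname{ind}A=\operatorname{ind}\mathcal{H}=\infty$.

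\emph{Sufficiency.} Let $A$ be compact and convex with $\operatorname{ind}A=\infty$. By Theorem~\ref{thm:torun} it suffices to check three things: $A$ is compact, $A$ is an AR, and $A$ has the DC property.

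The AR property follows from Dugundji's extension theorem: a convex subset of a normed space is an absolute extensor for metric spaces. Given a metric space $X\supseteq A$ with $A$ closed in $X$, extend $\id_A$ to a continuous map $X\to A$; this extension is a retraction.

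Before the DC property, I record one consequence of $\operatorname{ind}A=\infty$. The affine hull of $A$ is infinite-dimensional. Otherwise $A$ would be a subset of an affine space of some dimension $n$, which is homeomorphic to $\mathbb{R}^n$, and Lemma~\ref{lem:ind.mono} would give $\operatorname{ind}A\leq n$.

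For the DC property I use the sufficient-condition lemma stated earlier. Fix $\varepsilon>0$. By compactness choose a finite $\varepsilon$-net $\{p_1,\ldots,p_k\}\subseteq A$ and let $P=\operatorname{conv}\{p_1,\ldots,p_k\}\subseteq A$. Take a continuous partition of unity $(\phi_i)$ subordinate to the balls $B(p_i,\varepsilon)$ and define the Schauder-type map $r(x)=\sum_i\phi_i(x)p_i$. Then $r\colon A\to P$ is continuous and satisfies $\Vert r(x)-x\Vert<\varepsilon$ for all $x\in A$, since $r(x)$ is a convex combination of points within $\varepsilon$ of $x$.

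Because the affine hull of $A$ is infinite-dimensional, I can pick $a\in A\setminus\operatorname{aff}P$ and then $b\in A\setminus\operatorname{aff}(P\cup\{a\})$. For a small $t\in(0,1)$ set
\[
f(x)=(1-t)r(x)+ta,\qquad g(x)=(1-t)r(x)+tb .
\]
By convexity both maps send $A$ into $A$, and both are within $\varepsilon+t\operatorname{diam}A$ of the identity.

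To see that the images are disjoint, suppose $(1-t)p+ta=(1-t)q+tb$ with $p,q\in P$. Then $b-a=\tfrac{1-t}{t}(p-q)$ lies in the linear span of $P-P$. This gives $b\in a+\operatorname{span}(P-P)\subseteq\operatorname{aff}(P\cup\{a\})$, contradicting the choice of $b$. Choosing $t<\varepsilon/\operatorname{diam}A$ (any $t\in(0,1)$ if $\operatorname{diam}A=0$) and replacing $\varepsilon$ by $\varepsilon/2$ finishes the DC property.

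The only delicate points are the Dugundji step, which I would simply cite, and the translation of $\operatorname{ind}A=\infty$ into infinite-dimensionality of $\operatorname{aff}A$. The latter is the step I expect to need the most care, but it follows directly from Lemma~\ref{lem:ind.mono} as above.
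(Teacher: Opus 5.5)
Your proof is correct. The paper does not prove this statement; it quotes it from \cite{Sakai2020} next to Theorem~\ref{thm:torun} and uses both as black boxes.

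You instead derive it from Theorem~\ref{thm:torun}:
\begin{itemize}
\item \emph{Necessity} is the elementary observation that $\mathcal{H}$ contains copies of $(-1,1)^n$ for every $n$.
\item \emph{The AR property} comes from Dugundji's extension theorem. This is exactly where the normed-space hypothesis is used: beyond locally convex spaces Dugundji's theorem is unavailable, which is why the AR property appears as a hypothesis in the Dobrowolski--Toru\'nczyk version.
\item \emph{Infinite-dimensionality}: you convert $\operatorname{ind}A=\infty$ into infinite-dimensionality of $\operatorname{aff}A$ via Lemma~\ref{lem:ind.mono}.
\item \emph{The DC property} follows from the sufficient-condition lemma of Section~2, using the maps $(1-t)r+ta$ and $(1-t)r+tb$. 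Your verification that their images are disjoint is correct.
\end{itemize}

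Two cosmetic points. The condition $a\notin\operatorname{aff}P$ is never used; only $b\notin\operatorname{aff}(P\cup\{a\})$ matters. Also, $\operatorname{diam}A=0$ cannot occur, since a singleton has small inductive dimension $0$.

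What your route buys is that, within the paper, Theorem~\ref{thm:keller} is no longer an independent input but a corollary of Theorem~\ref{thm:torun}. It also makes clear that $\operatorname{ind}A=\infty$ is used only to find points of $A$ outside finite-dimensional affine subspaces.

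What it does not buy is elementarity. Toru\'nczyk's characterization is a much deeper result than Keller's theorem, which predates it by decades and has classical direct proofs. Simply citing the result, as the paper does, is therefore the more economical choice there.
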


The following theorem will be our main tool for establishing that various subsets of $\cd$ and $\qd$ 
are \FD{homeomorphic} to $\mathcal{H}$. 
\begin{thm}\label{thm:main.tool}
Suppose that $A_1 \subset A_2 \subset \cdots \subset A_n$ are convex, compact subsets
of a normed space $(N, \Vert \cdot \Vert)$, that $\operatorname{ind}A_1 = \infty$, and 
that $A_{i-1}$ is a $Z$-set of $A_i$ for every $i \in \{2,\ldots,n\}$. 
Then there exists a homeomorphism $h : [0,1]^{n-1} \times \mathcal{H} \to A_n$ fulfilling 
\begin{align*}
h(\{0\}\times[0,1]^{n-2}\times\mathcal{H}) &= A_{n-1}, \quad
h(\{0\}\times\{0\}\times[0,1]^{n-3}\times\mathcal{H}) = A_{n-2}, \ldots \\
h(\{0\}\times\{0\}\times\cdots\times\{0\}\times\mathcal{H}) &= A_1 .
\end{align*}
Moreover, every $A_i$ has the DC and the FP property, and $A_j$ is a retract of $A_i$ whenever $j<i$. 
\end{thm}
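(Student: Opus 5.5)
Since $\operatorname{ind}A_1=\infty$ and $A_1\subseteq A_i$, Lemma~\ref{lem:ind.mono} gives $\operatorname{ind}A_i=\infty$ for every $i$. Each $A_i$ is convex and compact, so Theorem~\ref{thm:keller} shows that every $A_i$ is homeomorphic to $\mathcal{H}$. The DC and FP properties then hold at once: the FP property is invariant under homeomorphisms, and the DC property follows from Theorem~\ref{thm:torun}. For the retract statement, $A_i$ is homeomorphic to $\mathcal{H}$ and hence an AR. Each $A_j$ with $j<i$ is a closed subset of $A_i$ and is itself an AR, since it is homeomorphic to $\mathcal{H}$. Hence $A_j$ is a retract of every metric space containing it as a closed subset, in particular of $A_i$.

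The core is the homeomorphism $h$, which I will build by induction on $n$. The case $n=1$ is just Theorem~\ref{thm:keller}. For the inductive step, assume there is a homeomorphism $h_{n-1}:[0,1]^{n-2}\times\mathcal{H}\to A_{n-1}$ with the nested-face property for $A_1,\ldots,A_{n-2}$. Fix any homeomorphism $\phi:A_n\to\mathcal{H}$. Then $\phi(A_{n-1})$ is a $Z$-set of $\mathcal{H}$, because $Z$-sets are preserved by homeomorphisms of the ambient space (for compact spaces one uses uniform continuity of $\phi$ and $\phi^{-1}$ to transport the approximating maps).

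Next I use a standard model. The set $F:=\{0\}\times[0,1]^{n-2}\times\mathcal{H}$ is a $Z$-set in $[0,1]^{n-1}\times\mathcal{H}$: the maps $(t,\mathbf{y})\mapsto(\max(t,\varepsilon),\mathbf{y})$ are $\varepsilon$-close to the identity and miss $F$. Also $[0,1]^{n-1}\times\mathcal{H}$ is homeomorphic to $\mathcal{H}$, for instance by Theorem~\ref{thm:torun} or by reindexing coordinates. Pick a homeomorphism $\psi:[0,1]^{n-1}\times\mathcal{H}\to\mathcal{H}$; then $\psi(F)$ is a $Z$-set of $\mathcal{H}$. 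Define
$$g:=\phi\circ h_{n-1}\circ(\pi\circ\psi^{-1})\big|_{\psi(F)}:\psi(F)\to\phi(A_{n-1}),$$
where $\pi(0,\mathbf{s},\mathbf{y})=(\mathbf{s},\mathbf{y})$ is the obvious identification of $F$ with $[0,1]^{n-2}\times\mathcal{H}$. This $g$ is a homeomorphism between $Z$-sets of $\mathcal{H}$, so by Theorem~\ref{thm:unknot} it extends to a homeomorphism $g^*$ of $\mathcal{H}$. Set $h:=\phi^{-1}\circ g^*\circ\psi$. Then $h$ agrees with $h_{n-1}\circ\pi$ on $F$, so $h(F)=A_{n-1}$. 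The deeper faces $\{0\}^k\times[0,1]^{n-1-k}\times\mathcal{H}$ lie inside $F$, and there $h_{n-1}$ maps them onto $A_{n-k}$ by the inductive hypothesis.

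I expect the main obstacle to be the routine but necessary verifications: that the $Z$-set property transfers under homeomorphisms, and that the face $F$ is a $Z$-set in the product model. Both are straightforward in the compact setting, where it suffices to use a constant $\varepsilon$.
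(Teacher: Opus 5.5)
Your proof is correct and follows the paper's route: Keller's theorem, together with monotonicity of $\operatorname{ind}$, makes every $A_i$ a Hilbert cube, and the homeomorphism $h$ is built by induction on $n$ via $Z$-set unknotting, which the paper only sketches by reference to Liu--Yang and which you carry out explicitly. The only deviation is the retract claim: you use that $A_j$ is an AR sitting closed in $A_i$, whereas the paper transports the face retractions of $[0,1]^{k}\times\mathcal{H}$ through $h$; your argument is equally valid and does not even need the $Z$-set hypothesis.
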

\begin{proof}
First of all notice that Theorem \ref{thm:keller} implies that $A_1$ is homeomorphic to 
the Hilbert cube $\mathcal{H}$ and so is every $A_i$ in the theorem. 
The existence of the stated homeomorphism can be proved by using the afore-mentioned results 
(in particular Theorem \ref{thm:unknot}, 
Lemma \ref{lem:ind.mono}, Theorem \ref{thm:keller}) in combination with the fact that
homeomorphisms preserve $Z$-sets, proceeding as in the proof of the Main Theorem 
in \cite{LiuYang2023} and using induction on $n$. 
Considering the homeomorphism $h$, the fact that each $A_i$ has the DC \FD{and the FP} property is clear \FD{(each $A_i$ is homeomorphic to $\mathcal{H}$)}. 
Moreover, using the fact that $\{0\}\times\{0\}\times\cdots\times\{0\}\times\mathcal{H}$ 
is a retract of $\{0\}\times\{0\}\times\cdots\times\{0\} \times [0,1]\times\mathcal{H}$, which 
is a retract of $\{0\}\times\{0\}\times\cdots\times\{0\} \times [0,1]^2\times\mathcal{H}$ and so on, 
shows that $A_j$ is a retract of $A_i$ whenever $j<i$. 
\end{proof}
\begin{rem}\label{rem:alternative.main}
Theorem \ref{thm:main.tool} also holds without convexity \FD{(and without an ambient normed space) if $A_1 \subset A_2 \subset \cdots \subset A_n$ are compact subsets of a metric space, each $A_i$ is homeomorphic to the Hilbert cube, and $A_{i-1}$ is a $Z$-set of $A_i$ for every $i$.}
\end{rem}

The following simple lemma on convex hulls of finite linearly independent sets will be used for deriving properties of various subsets of $\cd$ and $\qd$: 
\begin{lem}\label{lem:dimension}
Suppose that $Q_1,\ldots,Q_{n+1}$ are $n+1$ linearly independent elements of \FD{a normed space $(N,\Vert\cdot\Vert)$}
and set $\mathcal{G}=\{Q_1,\ldots,Q_{n+1}\}$.
Then the convex hull $\operatorname{convh}(\mathcal{G})$ of $\mathcal{G}$ has 
small inductive dimension equal to $n$.
Moreover, if $\mathcal{A}$ is a convex subset of \FD{$N$} containing infinitely many
linearly independent elements, then $\operatorname{ind}\mathcal{A} = \infty$. 
\end{lem}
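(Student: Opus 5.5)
The first claim reduces to the finite-dimensional case. Set $V=\operatorname{span}(\mathcal{G})$. Since $Q_1,\ldots,Q_{n+1}$ are linearly independent, $V$ has dimension $n+1$. Every finite-dimensional normed space is linearly homeomorphic to a Euclidean space, because all norms on it are equivalent. So $V$ is linearly homeomorphic to $\mathbb{R}^{n+1}$.

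Consider the affine hull $E=\{\sum_i a_iQ_i:\ \sum_i a_i=1\}$. It is the translate by $Q_{n+1}$ of the $n$-dimensional subspace spanned by $Q_1-Q_{n+1},\ldots,Q_n-Q_{n+1}$, so $E$ is homeomorphic to $\mathbb{R}^n$. Under this identification, $\operatorname{convh}(\mathcal{G})$ corresponds to a nondegenerate $n$-simplex $S\subseteq\mathbb{R}^n$, namely the image of the standard simplex under an affine bijection. By Lemma \ref{lem:ind.mono}, monotonicity gives $\operatorname{ind}\operatorname{convh}(\mathcal{G})\le\operatorname{ind}\mathbb{R}^n=n$.

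For the reverse inequality:
\begin{itemize}
\item $S$ has nonempty interior in $\mathbb{R}^n$; for instance, the barycenter is an interior point.
\item So $S$ contains a closed ball, which in turn contains an open ball.
\item The open ball is homeomorphic to $\mathbb{R}^n$.
\item Monotonicity and homeomorphism invariance (Lemma \ref{lem:ind.mono}) then give $n=\operatorname{ind}\mathbb{R}^n\le\operatorname{ind}S$.
\end{itemize}
Since $S$ is homeomorphic to $\operatorname{convh}(\mathcal{G})$, we obtain $\operatorname{ind}\operatorname{convh}(\mathcal{G})=n$.

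For the second claim, let $\mathcal{A}$ be convex and contain infinitely many linearly independent elements. Fix any $n\in\mathbb{N}$ and pick $n+1$ linearly independent elements of $\mathcal{A}$. By convexity, their convex hull lies in $\mathcal{A}$. The first part and monotonicity then give $\operatorname{ind}\mathcal{A}\ge n$. Because $n$ is arbitrary, no finite $k$ satisfies $\operatorname{ind}\mathcal{A}\le k$, hence $\operatorname{ind}\mathcal{A}=\infty$.

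The only step needing care is the lower bound $\operatorname{ind}S\ge n$. This requires identifying the simplex, with its subspace topology from $N$, with a subset of $\mathbb{R}^n$ that contains an open ball. The point to justify is that the norm topology on the finite-dimensional subspace $V$ coincides with the Euclidean one, and this is exactly the equivalence of norms in finite dimensions. Everything else is bookkeeping with Lemma \ref{lem:ind.mono}.
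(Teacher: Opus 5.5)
Your proof is correct and follows essentially the same route as the paper: identify $\operatorname{convh}(\mathcal{G})$ with a nondegenerate $n$-simplex, get $\operatorname{ind}\le n$ from its embedding in $\mathbb{R}^n$ and $\operatorname{ind}\ge n$ from a copy of $\mathbb{R}^n$ inside it, then use monotonicity for the second claim. The differences are cosmetic. The paper gets the homeomorphism from the standard simplex because $\mathbf{p}\mapsto\sum_i p_iQ_i$ is a continuous bijection from a compact space onto a Hausdorff space, so it needs no equivalence of norms. It also uses the open simplex rather than an open ball as the copy of $\mathbb{R}^n$.
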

\begin{proof}
Consider the unit simplex 
$S = \{\mathbf{p}=(p_1,\ldots,p_{n+1})\in [0,1]^{n+1}: \sum_{i=1}^{n+1} p_i = 1\}
\subset \mathbb{R}^{n+1}$ and define the mapping $f: S \to \mathcal{Q}_d$ by 
$f(\mathbf{p}) = \sum_i p_i Q_i$. Since $Q_1,\ldots,Q_{n+1}$ are linearly independent, 
$f: S \to f(S)$ is \FD{a continuous bijection from a compact onto a Hausdorff space and hence} a homeomorphism and $f(S)$ is the convex hull of $\mathcal{G}$. 
\FD{Moreover}, $S$ is homeomorphic to a subset of $\mathbb{R}^n$, so  
$\operatorname{ind}S \le n$ holds.
On the other hand, the open unit simplex 
$\{(p_1,\ldots,p_{n+1})\in \FD{(0,1)}^{n+1}: \sum_{i=1}^{n+1} p_i = 1\}$
is homeomorphic to $\mathbb{R}^n$ (see, e.g., \cite{Rockafellar1970}), implying  
$n \le \operatorname{ind}S$. Altogether we obtain $n = \operatorname{ind}S = \operatorname{ind}f(S)$, 
which completes the proof of the first assertion. The second assertion follows from the first one 
in combination with Lemma \ref{lem:ind.mono}.
\end{proof}

\section{Bivariate copulas generated by one-dimensional functions}

The families we consider in this section all have in common that they are \FD{expressed in terms of a one-dimensional function. In particular, we will consider copulas that are expressed in terms of their} diagonals. Recall that a function $\delta: [0,1] \to [0,1]$ is a diagonal of some bivariate copula, 
if and only if it fulfills the following properties (see, e.g., \cite{DuSe}) 
\begin{enumerate}
    \item[(i)] $\delta(u) \le u$ for all $u \in [0,1]$ and $\delta(1) = 1$,
    \item[(ii)] $\delta$ is non-decreasing,
    \item[(iii)] $\delta$ is 2-Lipschitz, i.e., $|\delta(v) - \delta(u)| \le 2|v-u|$ holds for all $u, v \in [0,1]$.
\end{enumerate}
\FD{We let $\mathcal{D}$ denote the family of all such functions and write $\delta_M(t)=t$, $\delta_\Pi(t)=t^2$, $\delta_W(t)=\max\{2t-1,0\}$. Notice that $\mathcal{D}$ also coincides with the set of all diagonal sections of a quasi-copula.} It is straightforward to verify \FD{(for instance, using Arzel\`a--Ascoli Theorem)} that  
$\mathcal{D}$ is a convex and compact subset of the Banach space $(C[0,1],\Vert \cdot \Vert_\infty)$.
We will let $\cdb^\delta$ and $\qdb^\delta$ denote the family of all copulas and quasi-copulas 
with diagonal $\delta \in \mathcal{D}$. Moreover, $\delta_C \in \mathcal{D}$ 
denotes the diagonal of the copula $C$. 
It is straightforward to verify that both sets are convex and closed, so 
$(\cdb^\delta,d_\infty)$ and $(\qdb^\delta,d_\infty)$ are convex and compact metric spaces.  \\
Before focusing on families of copulas related with diagonals we derive some preliminary results 
on subclasses of diagonals, which will be useful in the sequel.
Doing so, we let $\mathcal{D}_{con}$ denote the family of all convex $\delta \in \mathcal{D}$.
Since uniform limits of sequences of convex functions on $[0,1]$ are convex too, it follows that 
$\mathcal{D}_{con}$ is a convex and compact subset of $(\mathcal{D},\Vert \cdot \Vert_\infty)$.

\begin{lem}\label{lem:diagonals.general}
$\mathcal{D}_{con}$ is a compact, convex subset of $(\mathcal{D},\Vert \cdot \Vert_\infty)$ 
which contains infinitely many linearly independent elements. 
Moreover, $\mathcal{D}_{con}$ is a $Z$-set of $\mathcal{D}$.
\end{lem}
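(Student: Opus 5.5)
Compactness and convexity of $\mathcal{D}_{con}$ have already been noted: convex combinations of convex diagonals are convex diagonals, and uniform limits of convex functions are convex. For infinitely many linearly independent elements I will use the family $\delta_k(t)=t^{k}$ for $k=1,2$ and, more generally, take $\delta_k(t)=\max\{\tfrac{k+1}{k}t-\tfrac1k,\,t^2\}$ or similar piecewise linear convex functions. The simplest choice is $\delta_{a}(t)=\max\{0,\,t-a\}/(1-a)$... but this is not $2$-Lipschitz for large $a$, so I restrict to $a\in(0,\tfrac12]$. Then $\delta_a(t)=\max\{0,(t-a)/(1-a)\}$ is convex and non-decreasing, satisfies $\delta_a\le t$ and $\delta_a(1)=1$, and has slope $1/(1-a)\le 2$. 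For distinct $a_1<\dots<a_n$ in $(0,\tfrac12]$ the functions $\delta_{a_i}$ have kinks at different points $a_i$, so they are linearly independent: a vanishing linear combination must have a second distributional derivative $\sum c_i(1-a_i)^{-1}\epsilon_{a_i}=0$, forcing all $c_i=0$. By Lemma~\ref{lem:dimension}, $\operatorname{ind}\mathcal{D}_{con}=\infty$.

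For the $Z$-set property it suffices, by compactness, to find for every $\varepsilon>0$ a continuous $f:\mathcal{D}\to\mathcal{D}\setminus\mathcal{D}_{con}$ with $\Vert f(\delta)-\delta\Vert_\infty<\varepsilon$. The idea is to insert a small non-convex bump near $t=1$ in a controlled way, uniformly in $\delta$. Fix small $\eta>0$ and set $f(\delta)=g_\eta\circ(\text{compressed }\delta)$. Concretely, let $\delta^\eta(t)=(1-\eta)\,\delta\bigl(t/(1-\eta)\bigr)$ for $t\le 1-\eta$. On $[1-\eta,1]$ I will append a piece that rises from $1-\eta$ to $1$ with slope $2$ on $[1-\eta,1-\eta/2]$ and slope $0$ on $[1-\eta/2,1]$. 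That piece would reach $1-\eta+\eta=1$ at $1-\eta/2$ and then stay at $1$, which violates $\delta(t)\le t$. So instead I use slope $2$ on $[1-\eta,1-\tfrac{2\eta}{3}]$, slope $0$ on $[1-\tfrac{2\eta}{3},1-\tfrac{\eta}{3}]$ and slope $1$ on $[1-\tfrac{\eta}{3},1]$. This gives the values $1-\eta\to1-\tfrac{\eta}{3}\to1-\tfrac{\eta}{3}\to1$, stays $\le t$ (check: at $1-\tfrac{2\eta}{3}$ the value $1-\tfrac{\eta}{3}$ exceeds $t$ — bad). I would fix this by using slopes $1,0,2$ in that order instead: the values run $1-\eta\to1-\tfrac{2\eta}{3}\to1-\tfrac{2\eta}{3}\to1$, with slope $2$ on the final third of length $\eta/3$ adding $\tfrac{2\eta}{3}$. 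This stays $\le t$, is $2$-Lipschitz and is monotone.

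The compressed part $\delta^\eta$ satisfies $\delta^\eta(t)\le t$, is non-decreasing and $2$-Lipschitz, and is continuous at the join since $\delta^\eta(1-\eta)=1-\eta$. The resulting $f(\delta)$ has slope $1$ then $0$ then $2$ on the appended piece. A slope decrease from $1$ to $0$ means $f(\delta)$ is never convex, so $f(\delta)\notin\mathcal{D}_{con}$. Continuity of $f$ in $\Vert\cdot\Vert_\infty$ is clear. For closeness, $\Vert\delta^\eta-\delta\Vert$ on $[0,1-\eta]$ is bounded by $\eta+2\cdot\eta/(1-\eta)$ via the $2$-Lipschitz property. On $[1-\eta,1]$ both functions lie in $[1-2\eta,1]$, since $\delta(t)\ge\delta(1)-2\eta$. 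Hence $\Vert f(\delta)-\delta\Vert_\infty\le C\eta<\varepsilon$ for small $\eta$. The main obstacle is designing the appended piece so that it respects all three diagonal constraints while being non-convex; the slope pattern $1,0,2$ handles this.
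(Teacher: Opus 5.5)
Your proof is correct and essentially follows the paper's route. You exhibit infinitely many linearly independent convex diagonals and apply Lemma~\ref{lem:dimension}; your kinked functions $\max\{0,(t-a)/(1-a)\}$ are the $\gamma_a$ the paper uses later for biconic copulas, whereas here the paper uses $x^a$, $a\in[1,2]$. Your $Z$-set map (compress $\delta$ onto $[0,1-\eta]$, append a fixed non-convex piece) is the paper's $f_b$, i.e.\ the diagonal of an ordinal sum, except that your slope pattern $1,0,2$ corresponds to $M$ followed by $W$ on the last block, while the paper uses just $W$ (pattern $0,2$) and gets non-convexity from the constant stretch together with $f_b(\delta)(0)=0<b$.
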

\begin{proof}
For every $a \in \FD{[1,2]}$ define $\delta_a \in \mathcal{D}_{con}$ by 
\begin{equation}\label{eq:deltaa}
\delta_a(x)=x^a
\end{equation} 
and write $\mathcal{D}^\star:=\{\delta_a: a \in [1,2]\}$. Considering that the family 
$\mathcal{D}^\star$ is linearly independent in $\FD{(C([0,1]),\Vert \cdot \Vert_\infty)}$ \FD{(if $\sum_i e_i x^{a_i}\equiv0$ with $a_1<\dots<a_n$, divide by $x^{a_1}$ and let $x\downarrow0$ to get $e_1=0$, and iterate)}, the first 
assertion follows \FD{(via Lemma~\ref{lem:dimension})}. For proving the second assertion, for every $b \in (0,1)$ define the mapping 
$f_b: \mathcal{D} \to \mathcal{D}$ by
$$
(f_b(\delta))(x):=b\,\delta\left(\tfrac{x}{b} \right)\mathbf{1}_{[0,b]}(x) + 
\left(b + (1-b)\,\delta_W \left(\tfrac{x-b}{1-b} \right)\right) \mathbf{1}_{(b,1]}(x).
$$ 
\FD{Note that $f_b(\delta)$ is the diagonal of $\mathcal{OS}_b(C,W)$ for any $C$ with $\delta_C=\delta$, so $f_b(\delta)\in\mathcal{D}$, and that $f_b$ is continuous.}
Obviously for every $b\in(0,1)$ and every $\delta \in \mathcal{D}$ we have that 
$f_b(\delta)$ is not convex \FD{(it is constant equal to $b$ on $[b,\frac{1+b}{2}]$ although $f_b(\delta)(0)=0<b$)}, so $f_b$ maps $\mathcal{D}$ into $\mathcal{D}\setminus \mathcal{D}_{con}$.
Since for arbitrary $\varepsilon>0$ we can find some sufficiently large $b \in (0,1)$ with
$$
\sup_{\delta \in \mathcal{D}} \Vert f_b(\delta) - \delta \Vert_\infty < \varepsilon,
$$
it follows that $\mathcal{D}_{con}$ is a $Z$-set of $\mathcal{D}$ \FD{(in fact $\Vert f_b(\delta)-\delta\Vert_\infty\le 3(1-b)$ by the $2$-Lipschitz property)}.
\end{proof}

Applying Theorem \ref{thm:main.tool} for the case that $n=2, A_1=\mathcal{D}_{con}, A_2=\mathcal{D}$ 
and $N=C([0,1])$ we obtain the following result:

\begin{thm}\label{thm:main.diagonals}
There exists a homeomorphism $h : [0,1] \times \mathcal{H} \to \mathcal{D}$ with 
$h(\{0\} \times \mathcal{H})= \mathcal{D}_{con}$. Moreover, $\mathcal{D}_{con}$ has the DC and the FP property and
 is a retract of $\mathcal{D}$.
\end{thm}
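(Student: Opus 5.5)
The statement is a direct application of Theorem~\ref{thm:main.tool} with $n=2$, $A_1=\mathcal{D}_{con}$, $A_2=\mathcal{D}$ and ambient normed space $N=(C([0,1]),\Vert\cdot\Vert_\infty)$. The plan is therefore to verify the hypotheses of that theorem one by one and then read off the conclusions.

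\emph{Compactness and convexity.} Both $\mathcal{D}$ and $\mathcal{D}_{con}$ are convex and compact subsets of $C([0,1])$, as observed just before Lemma~\ref{lem:diagonals.general}. For $\mathcal{D}$ this follows from Arzel\`a--Ascoli, since the defining conditions (i)--(iii) are closed under uniform limits and under convex combinations. For $\mathcal{D}_{con}$ it follows because uniform limits of convex functions are convex. The inclusion $\mathcal{D}_{con}\subset\mathcal{D}$ is proper, since for instance $f_b(\delta_M)\notin\mathcal{D}_{con}$.

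\emph{Infinite dimension.} By Lemma~\ref{lem:diagonals.general}, $\mathcal{D}_{con}$ contains infinitely many linearly independent elements, namely the family $\mathcal{D}^\star=\{x^a: a\in[1,2]\}$. The second part of Lemma~\ref{lem:dimension} then gives $\operatorname{ind}\mathcal{D}_{con}=\infty$.

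\emph{$Z$-set condition.} This is the only nontrivial input, and it has already been supplied by Lemma~\ref{lem:diagonals.general}. There the ordinal-sum-type maps $f_b$ send $\mathcal{D}$ into $\mathcal{D}\setminus\mathcal{D}_{con}$ while staying uniformly $3(1-b)$-close to the identity. Because $\mathcal{D}$ is compact, constant $\varepsilon$ suffice in the definition of a $Z$-set, so $\mathcal{D}_{con}$ is a $Z$-set of $\mathcal{D}$.

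\emph{Conclusion.} Theorem~\ref{thm:main.tool} with $n=2$ now yields a homeomorphism $h:[0,1]\times\mathcal{H}\to\mathcal{D}$ with $h(\{0\}\times\mathcal{H})=\mathcal{D}_{con}$. The same theorem gives that $\mathcal{D}_{con}$ is homeomorphic to $\mathcal{H}$, hence has the DC and FP properties. It also gives that $\mathcal{D}_{con}$ is a retract of $\mathcal{D}$; concretely, composing the retraction $[0,1]\times\mathcal{H}\to\{0\}\times\mathcal{H}$, $(t,\xb)\mapsto(0,\xb)$, with $h$ and $h^{-1}$ gives the retraction. No step is a real obstacle. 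The only point to watch is that the hypotheses are used in the exact form Theorem~\ref{thm:main.tool} requires, namely a nested pair of convex compact sets in a common normed space with the smaller one infinite-dimensional.
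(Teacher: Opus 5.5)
Your proposal is correct and matches the paper's argument exactly. The paper obtains Theorem~\ref{thm:main.diagonals} by applying Theorem~\ref{thm:main.tool} with $n=2$, $A_1=\mathcal{D}_{con}$, $A_2=\mathcal{D}$ and $N=C([0,1])$, with the hypotheses supplied by Lemma~\ref{lem:diagonals.general} (via Lemma~\ref{lem:dimension} for $\operatorname{ind}\mathcal{D}_{con}=\infty$), and your explicit retraction $h\circ p\circ h^{-1}$ with $p(t,\xb)=(0,\xb)$ is a valid unpacking of the retract claim.
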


\FD{This latter result should be compared with \cite{LiuLiuJiang2024}, where the space of diagonals is already shown to be homeomorphic to $\mathcal{H}$.}

\subsection{Copulas and quasi-copulas with given diagonal}   

Throughout this subsection we let $\delta \in \mathcal{D}\setminus\{\delta_M\}$ be arbitrary but fixed. 
\begin{lem}
$\cdb^{\delta}$ is a compact, convex subset of $(\cdb,d_\infty)$ which contains infinitely many 
linearly independent elements. Moreover, $\cdb^{\delta}$ is a $Z$-set of $\cdb$.
\end{lem}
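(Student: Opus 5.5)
The plan is to check the three assertions separately. Convexity and compactness are routine. The linearly independent family and the $Z$-set property each need a construction.

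\emph{Convexity and compactness.} The condition $C(t,t)=\delta(t)$ for all $t$ is preserved under convex combinations and under uniform limits. Hence $\cdb^\delta$ is a closed convex subset of the compact convex set $(\cdb,d_\infty)$. It is nonempty, since for instance the Bertino or the Fredricks--Nelsen copula has diagonal $\delta$.

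\emph{Infinitely many linearly independent elements.} Since $\delta\neq\delta_M$, fix $t_0\in(0,1)$ with $\delta(t_0)<t_0$ and fix some $C\in\cdb^\delta$. The rectangle $R=[0,t_0]\times(t_0,1]$ then has mass $\mu_C(R)=t_0-\delta(t_0)=:m>0$. Let $\nu_1,\nu_2$ be the marginals of the normalised restriction $\frac1m\mu_C|_R$. They are absolutely continuous, being bounded by $\lambda/m$, so their distribution functions $F_1,F_2$ are continuous.

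For $D\in\cdb$, let $\mu_D^\ast$ be the push-forward of $\mu_D$ under $(u,v)\mapsto(F_1^{-1}(u),F_2^{-1}(v))$; this is a probability measure on $R$ with marginals $\nu_1,\nu_2$. Define $C_D$ through
\[
\mu_{C_D}=\mu_C|_{R^c}+m\,\mu_D^\ast .
\]
Its marginals coincide with those of $\mu_C$, so $C_D\in\cdb$.

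The diagonal is also unchanged. For $t<t_0$ we have $[0,t]^2\cap R=\emptyset$. For $t\ge t_0$ we have $[0,t]^2\cap R=[0,t_0]\times(t_0,t]$, whose mass depends only on the second marginal, and that marginal is unchanged. Hence $C_D\in\cdb^\delta$.

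The map $D\mapsto C_D$ is affine. The push-forward $D\mapsto\mu_D^\ast$ is injective, because $F_i(F_i^{-1}(u))=u$ by continuity of $F_i$. It follows that linearly independent $D_1,\dots,D_n$ yield linearly independent $C_{D_1},\dots,C_{D_n}$, since affine dependence among the $C_{D_i}$ transfers back. Taking for instance infinitely many linearly independent FGM or Frank copulas as the $D_i$ gives the claim, and Lemma~\ref{lem:dimension} then yields $\operatorname{ind}\cdb^\delta=\infty$.

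\emph{$Z$-set property.} Closedness is already shown, and by compactness constant $\varepsilon$ suffices. The idea is to imitate Lemma~\ref{lem:diagonals.general} with ordinal sums, but the choice of the second summand must depend on $\delta$:
\begin{itemize}
\item If $\delta(t)=t$ on some interval $[b_0,1]$, use $f_b(C)=\mathcal{OS}_b(C,W)$ with $b\ge b_0$. Its diagonal equals $b<t=\delta(t)$ for $t\in(b,\tfrac{1+b}{2}]$, so $f_b(C)\notin\cdb^\delta$.
\item Otherwise, $\delta$ differs from the identity on every interval $(b,1]$. Use $f_b(C)=\mathcal{OS}_b(C,M)$, whose diagonal is the identity on $(b,1]$ and therefore cannot equal $\delta$.
\end{itemize}
In both cases $f_b$ is continuous. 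Since both $f_b(C)$ and $C$ are $2$-Lipschitz in each direction and nearly agree near the diagonal square $[0,b]^2$, a direct estimate gives $d_\infty(f_b(C),C)\le c(1-b)$ uniformly in $C$, for some absolute constant $c$. Letting $b\uparrow1$ shows that $\cdb^\delta$ is a $Z$-set of $\cdb$.

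The main obstacle I expect is the linear independence step. The delicate points are that the quantile push-forward is injective and affine, and that the modification on $R$ genuinely leaves the diagonal unchanged.
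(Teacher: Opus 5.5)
Your proof is correct. The convexity and compactness part and the $Z$-set part coincide with the paper's argument. The paper uses the same case distinction: $\mathcal{OS}_b(C,W)$ when $\delta$ is the identity near $1$, and $\mathcal{OS}_b(C,M)$ otherwise, with a uniform $O(1-b)$ bound. The paper's bound is $3(1-b)$; note that on $[0,b]^2$ the ordinal sum is the rescaled copy $b\,C(u/b,v/b)$ rather than $C$ itself, which is where the Lipschitz estimate is needed.

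You genuinely differ from the paper in the linear-independence step. The paper simply invokes \cite[Proposition 2, first proof]{Durante2011}. You instead build an affine injection $D\mapsto C_D$ of all of $\cdb$ into $\cdb^\delta$, replacing $\mu_C|_R$ on the off-diagonal rectangle $R=[0,t_0]\times(t_0,1]$ by a quantile-transported copy of $\mu_D$ with the same marginals $\nu_1,\nu_2$. This generalises the construction the paper itself uses in Lemma~\ref{lem:aux.copula}, which is exactly your case $D=\Pi$ (there $\mu_\Pi^\ast=\nu_1\otimes\nu_2$). Your route is self-contained and yields more: $\cdb^\delta$ contains an affinely embedded copy of $\cdb$, so $\operatorname{ind}\cdb^\delta=\infty$ follows at once from Lemma~\ref{lem:ind.mono}. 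The paper's route is shorter but relies on an external reference.

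There are two small points to fix.

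\begin{itemize}
\item FGM copulas all have the form $\Pi+\theta\,uv(1-u)(1-v)$, so they span only a two-dimensional space and cannot supply infinitely many linearly independent $D_i$. Use instead, for example, the mutually singular completely dependent copulas supported on the graphs of $x\mapsto(x+a)\bmod 1$, $a\in[0,1)$. Their linear independence follows from disjointness of supports as in the proofs of Lemmas~\ref{lem:mini.infty} and~\ref{lem:main.cdsh}. Alternatively, give an actual proof for the Frank family.
\item The transfer of independence deserves one explicit line, since an injective affine map need not preserve linear independence in general. Here it works because evaluating $\sum_i e_iC_{D_i}=0$ at $(1,1)$ forces $\sum_i e_i=0$. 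The constant part $\mu_C|_{R^c}$ then cancels, and injectivity of $D\mapsto\mu_D^\ast$ gives $\sum_i e_iD_i=0$.
\end{itemize}
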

\begin{proof}
Compactness and convexity are obvious. The fact that $\cdb^{\delta}$ contains infinitely many 
linearly independent elements is a direct consequence of \FD{\cite[Proposition 2, first proof]{Durante2011}}. Finally, \FD{we show that $\cdb^\delta$ is a $Z$-set of $\cdb$ using ordinal sums. If there is $c<1$ with $\delta(t)=t$ for all $t\in[c,1]$ set $R:=W$, otherwise set $R:=M$. For $b\in(0,1)$ put $f_b(C):=\mathcal{OS}_b(C,R)$. On $(b,1]$ the diagonal of $f_b(C)$ equals $t$ (if $R=M$) or equals $b$ on $(b,\frac{1+b}{2}]$ (if $R=W$). In the first case $\delta$ does not coincide with the identity on $(b,1]$ by the choice of $R$; in the second case $\delta(t)=t>b$ on $(\max\{b,c\},\frac{1+b}{2}]$ for $b$ close to $1$. Hence, for all $b$ sufficiently close to $1$, $f_b$ maps $\cdb$ into $\cdb\setminus\cdb^\delta$, is continuous, and fulfills $\sup_{C\in\cdb}d_\infty(f_b(C),C)\le 3(1-b)$ (Lipschitz property of copulas). Therefore $\cdb^{\delta}$ is a $Z$-set of $\cdb$.}
\end{proof}
Applying Theorem \ref{thm:main.tool} for the case that $n=2, A_1=\cdb^{\delta}, A_2=\cdb$ 
and $N=C([0,1]^2)$ we obtain the following result:
\begin{thm}\label{thm:main.biv.fixed.diag}
Let $\delta \in \mathcal{D}\setminus \{\delta_M\}$ be arbitrary but fixed. 
Then \FD{there} exists a homeomorphism $h : [0,1] \times \mathcal{H} \to \cdb$ with 
$h(\{0\} \times \mathcal{H})= \cdb^{\delta}$. Moreover, $\cdb^{\delta}$ has the 
DC and the FP property and
 is a retract of $\FD{\cdb}$.
\end{thm}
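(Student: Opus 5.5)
This is a direct application of Theorem~\ref{thm:main.tool} with $n=2$, $A_1=\cdb^{\delta}$, $A_2=\cdb$ and ambient normed space $N=C([0,1]^2)$ with $\Vert\cdot\Vert_\infty$. All hypotheses are supplied by the preceding lemma, so the plan is to check them one at a time and then read off the conclusions.

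\emph{Step 1: the sets are convex and compact.} Both $\cdb^{\delta}$ and $\cdb$ are convex, compact subsets of $N$. For $\cdb$ this is recalled in Section~2. For $\cdb^{\delta}$, note that the map $C\mapsto\delta_C$ is affine and $d_\infty$-continuous. Hence $\cdb^\delta$ is a closed convex subset of the compact set $\cdb$.

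\emph{Step 2: infinite dimension.} The preceding lemma gives that $\cdb^{\delta}$ contains infinitely many linearly independent elements. By the second assertion of Lemma~\ref{lem:dimension}, this yields $\operatorname{ind}\cdb^{\delta}=\infty$.

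\emph{Step 3: the $Z$-set condition.} The same lemma shows that $\cdb^{\delta}$ is a $Z$-set of $\cdb$. The construction is the ordinal-sum map $f_b(C)=\mathcal{OS}_b(C,R)$, with $b$ close to $1$. Here the assumption $\delta\neq\delta_M$ is essential: if $\delta=\delta_M$, then $\cdb^\delta=\{M\}$, which has finite dimension, so Step~2 would fail. Since $\cdb$ is compact, constant $\varepsilon$ suffices in the definition of a $Z$-set. This is exactly the uniform bound $3(1-b)$ provided there.

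\emph{Step 4: conclusions.} Theorem~\ref{thm:main.tool} now gives a homeomorphism $h:[0,1]\times\mathcal{H}\to\cdb$ with $h(\{0\}\times\mathcal{H})=\cdb^{\delta}$. It also gives that $\cdb^\delta$ has the DC and FP properties and is a retract of $\cdb$.

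No real obstacle remains, since everything is packaged in earlier results. The only point needing care is Step~3: one must confirm that $f_b$ avoids $\cdb^\delta$ for every $C$ simultaneously. This requires the case distinction on whether $\delta$ equals the identity near $1$, and the lemma already handles it.
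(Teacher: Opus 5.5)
Your proposal is correct and follows the paper's argument: the paper also applies Theorem~\ref{thm:main.tool} with $n=2$, $A_1=\cdb^{\delta}$, $A_2=\cdb$, $N=C([0,1]^2)$. The preceding lemma supplies compactness, convexity, infinitely many linearly independent elements and the $Z$-set property (via $C\mapsto\mathcal{OS}_b(C,R)$ with $R\in\{M,W\}$), and your placement of the remark on $\delta\neq\delta_M$ is slightly off: it is only the dimension hypothesis that fails for $\delta_M$, since $\cdb^{\delta_M}=\{M\}$, while the $Z$-set construction works there too.
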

Next we focus on the interrelation between $\cdb^{\delta}$ and  $\qdb^{\delta}$ and start with the 
following lemma.

\FD{\begin{lem}\label{lem:aux.copula}
Let $\delta\in\mathcal{D}\setminus\{\delta_M\}$. Then there exist $P\in\cdb^\delta$, points 
$0<x_1<y_1<1$ and some $r_0>0$ such that, for every $r\in(0,r_0]$, setting $x_2:=x_1+r$, 
$y_2:=y_1+r$, the square $R_r:=[x_1,x_2]\times[y_1,y_2]$ is contained in 
$\{(x,y)\in(0,1)^2:\ x<y\}$ and, for all $x\in(x_1,x_2]$ and $y\in(y_1,y_2]$, the rectangles
\begin{equation}\label{eq:aux_lemma_3.5}
[0,x_1]\times(y_1,y],\quad (x_1,x]\times[0,y_1],\quad (x_1,x_2]\times(y_2,1],\quad (x_2,1]\times(y_1,y_2]
\end{equation}
have positive $\mu_P$-measure.
\end{lem}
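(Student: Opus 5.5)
The plan is to start from an arbitrary copula in $\cdb^\delta$ and modify it only inside a rectangle lying strictly above the main diagonal, in a way that preserves both marginals and the diagonal. The modified copula should have enough mass near a chosen square $R_r$ to make all four strips in \eqref{eq:aux_lemma_3.5} positive.

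\textbf{Step 1: a rectangle carrying mass.} Since $\delta\neq\delta_M$, there is $t_0\in(0,1)$ with $\delta(t_0)<t_0$. For any $C\in\cdb^\delta$ (for instance the Bertino copula of $\delta$) we have $\mu_C([0,t_0]\times(t_0,1])=t_0-\delta(t_0)>0$. Cover this set by countably many closed rectangles $[p,q]\times[r,s]$ with $q<r$. One of them, call it $T$, has $m:=\mu_C(T)>0$, and $T\subseteq\{x<y\}$.

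\textbf{Step 2: the key invariance.} Let $\sigma$ be a finite signed measure supported in $T=[p,q]\times[r,s]$ with both marginals equal to zero. Then $\mu_C+\sigma$ has the same marginals as $\mu_C$. It also has the same diagonal, because $\sigma([0,t]^2)=0$ for every $t$:
\begin{itemize}
\item if $t<r$, then $[0,t]^2$ misses $T$;
\item if $t\ge r>q$, then $[0,t]^2\cap T=[p,q]\times[r,\min(t,s)]$ is a full vertical strip of $T$, so its $\sigma$-measure is the second marginal of $\sigma$ evaluated on $[r,\min(t,s)]$, which is $0$.
\end{itemize}
Hence any measure obtained from $\mu_C$ by replacing $\mu_C|_T$ with another measure on $T$ having the same two marginals $\nu_1,\nu_2$ (each of total mass $m$) is again doubly stochastic with diagonal $\delta$.

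\textbf{Step 3: choosing the replacement, and the main obstacle.} The natural first choice is to replace $\mu_C|_T$ by $\frac1m\,\nu_1\otimes\nu_2$, which gives
$$
\mu_P=\mu_C|_{T^c}+\tfrac1m\,\nu_1\otimes\nu_2 .
$$
The difficulty is that positivity of all four strips for \emph{every} $x\in(x_1,x_2]$ and $y\in(y_1,y_2]$ forces the following:
\begin{itemize}
\item $x_1$ must be a right-accumulation point of the support of the first marginal of $\mu_P$ restricted to heights $\le y_1$;
\item $y_1$ must be a right-accumulation point of the support in the second coordinate restricted to $[0,x_1]$;
\item similarly, the strips $(x_1,x_2]\times(y_2,1]$ and $(x_2,1]\times(y_1,y_2]$ must carry mass.
\end{itemize}
If $\nu_1$ or $\nu_2$ is purely atomic, this can fail. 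To get around this, I would first pass from $C$ to a convex combination $\frac12(C+C')$ with $C'\in\cdb^\delta$ chosen so that $\nu_1,\nu_2$ have a nonatomic part. Alternatively, I would enlarge $T$ to a union of adjacent rectangles above the diagonal and use a piecewise product on them.

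Once $\nu_1,\nu_2$ have nondegenerate (for instance, Lebesgue-absolutely-continuous) parts, choose $x_1$ in the interior of the support of $\nu_1$ and $y_1$ in the interior of the support of $\nu_2$, with $x_1<y_1$, and take $r_0$ so small that $R_{r_0}\subseteq T$. Then each of the four strips meets $T$ in a product set $A\times B$ with $\nu_1(A)>0$ and $\nu_2(B)>0$, so $\frac1m\,\nu_1\otimes\nu_2$ already gives it positive mass:
\begin{itemize}
\item $[0,x_1]\times(y_1,y]$ contains $[p,x_1]\times(y_1,y]$;
\item $(x_1,x]\times[0,y_1]$ contains $(x_1,x]\times[r,y_1]$;
\item $(x_1,x_2]\times(y_2,1]$ contains $(x_1,x_2]\times(y_2,s]$;
\item $(x_2,1]\times(y_1,y_2]$ contains $(x_2,q]\times(y_1,y_2]$.
\end{itemize}
For these containments to be nondegenerate, $x_1,y_1$ must also be chosen so that $p<x_1$, $x_2<q$, $r<y_1$ and $y_2<s$ with positive mass on each side. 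The expected main obstacle is exactly guaranteeing this nonatomic support inside a rectangle above the diagonal; everything else is the bookkeeping of Step 2.
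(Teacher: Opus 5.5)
Your Steps 1 and 2 coincide with the paper's construction. The paper works directly with $R=[0,t_0]\times(t_0,1]$ and replaces $\mu_C|_R$ by $\frac1m\nu_1\otimes\nu_2$; your countable cover is harmless but unnecessary.

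\textbf{The gap is Step 3.} The actual choice of $x_1,y_1,r_0$ is the whole content of the lemma, and you leave it open.
\begin{itemize}
\item The obstacle you name cannot occur. Since $\mu_C$ has uniform marginals, $\nu_1(A)\le\mu_C(A\times[0,1])=\lambda(A)$ and likewise $\nu_2\le\lambda$. So $\nu_1,\nu_2$ are automatically absolutely continuous, with $1$-Lipschitz distribution functions.
\item The selection rule you propose, taking $x_1$ in the interior of $\operatorname{supp}\nu_1$, does not follow from absolute continuity and can be impossible. Let $F\subseteq[0,\frac12]$ be a fat Cantor set, and let $C$ be the completely dependent copula of a $\lambda$-preserving map that sends $F$ into $(\frac12,1]$, sends $[0,\frac12]\setminus F$ onto $[0,\frac12-\lambda(F)]$ via $x\mapsto\lambda([0,x]\setminus F)$, sends $(\frac12,\frac12+\lambda(F)]$ down by $\lambda(F)$, and fixes the rest. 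Then all mass of $\mu_C$ strictly above the diagonal sits over $F$. Hence for every rectangle $T\subseteq\{x<y\}$ the first marginal of $\mu_C|_T$ is supported in $F$, and its support has empty interior.
\item Your fallback of passing to $\frac12(C+C')$ would need an explicit $C'\in\cdb^\delta$ whose marginals on $T$ have supports with nonempty interior. None is provided.
\end{itemize}

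\textbf{The missing idea} is to choose the points through the distribution functions rather than through interiors of supports.
\begin{itemize}
\item By continuity, let $x_1$ be the largest point with $\nu_1([p,x_1])=\frac m2$, and $y_1$ the largest point with $\nu_2([r,y_1])=\frac m2$.
\item Pick $r_0>0$ with $\nu_1([p,x_1+r_0])<m$ and $\nu_2([r,y_1+r_0])<m$.
\item Then $\nu_1([p,x_1])>0$, and $\nu_1((x_1,x])>0$ for every $x>x_1$ by maximality of $x_1$. Also $\nu_1((x_2,q])>0$, and the same holds for $\nu_2$.
\item The ordering $p<x_1<x_2<q<r<y_1<y_2<s$ follows automatically; for instance $x_1-p\ge\nu_1([p,x_1])>0$ because $\nu_1\le\lambda$.
\end{itemize}
With these choices, your four containments give each of the four rectangles in the statement positive $\frac1m\nu_1\otimes\nu_2$-measure. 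This is exactly the paper's argument.
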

\begin{proof}
In fact, fix $t_0\in(0,1)$ with $\delta(t_0)<t_0$ and some $C\in\cdb^\delta$, set 
$R:=[0,t_0]\times(t_0,1]$, $m:=\mu_C(R)=t_0-\delta(t_0)>0$, and let $\nu_1,\nu_2$ be the 
marginals of the restriction of $\mu_C$ to $R$. Replacing this restriction by 
$\frac1m\nu_1\otimes\nu_2$ changes neither the marginals nor the mass of the squares $[0,t]^2$ 
(the set $[0,t]^2\cap R$ is empty for $t\le t_0$ and equals $[0,t_0]\times(t_0,t]$, with mass 
$\nu_2((t_0,t])$ under both measures, for $t>t_0$). Hence the resulting doubly stochastic 
measure is $\mu_P$ for some $P\in\cdb^\delta$. Let $x_1$ (resp.\ $y_1$) be the largest point with 
$\nu_1([0,x_1])=\frac m2$ (resp.\ $\nu_2((t_0,y_1])=\frac m2$), so that $0<x_1<t_0<y_1<1$, and 
choose $r_0>0$ with $x_1+r_0<t_0$, $\nu_1([0,x_1+r_0])<m$ and $\nu_2((t_0,y_1+r_0])<m$. 
Then $R_r\subseteq(0,t_0)\times(t_0,1)$, and since $\mu_P\ge\frac1m\nu_1\otimes\nu_2$ on $R$, 
each rectangle in \eqref{eq:aux_lemma_3.5} contains a subrectangle of $R$ of positive 
$\nu_1\otimes\nu_2$-measure. 
\end{proof}}

\FD{
\begin{lem}\label{lem:Zset.diag.quasi}
Let $\delta \in \mathcal{D}\setminus\{\delta_M\}$. Then $\cdb^{\delta}$ is a $Z$-set of $\qdb^{\delta}$.
\end{lem}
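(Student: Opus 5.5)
The plan is to verify the definition of a $Z$-set directly. Both $\cdb^{\delta}$ and $\qdb^{\delta}$ are convex and compact, so by the remark after the definition of $Z$-sets it suffices to do the following. For every $\varepsilon>0$ we construct a continuous map $f:\qdb^{\delta}\to\qdb^{\delta}\setminus\cdb^{\delta}$ with $\sup_Q d_\infty(f(Q),Q)<\varepsilon$. The ordinal-sum maps used for $\cdb^{\delta}\subset\cdb$ cannot be reused, because they change the diagonal. Instead we will stay inside $\qdb^\delta$ by making the maps affine:
\[
f_s(Q):=(1-s)\,Q+s\,\tilde P,\qquad s\in(0,1),
\]
where $\tilde P\in\qdb^\delta\setminus\cdb^\delta$ is a fixed proper quasi-copula with diagonal $\delta$. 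Such an $f_s$ is continuous and maps $\qdb^\delta$ into $\qdb^\delta$ by convexity, and $d_\infty(f_s(Q),Q)\le s$. The whole task is therefore to choose $\tilde P$ so that $f_s(Q)$ is never a copula, at least for all small $s$.

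\emph{Construction of $\tilde P$ via Lemma~\ref{lem:aux.copula}.} Take $P$, $x_1<y_1$, $r_0$ and the squares $R_r$ from Lemma~\ref{lem:aux.copula}. Modify $\mu_P$ by a signed mass transfer $\nu_r$ of total size $\theta$, chosen as follows.
\begin{itemize}
\item It removes mass $\theta$ from the square $R_r$, so that $\tilde P:=P+\nu_r$ gives $R_r$ negative volume, at most $-\theta$.
\item It adds compensating mass $\theta$ on the rectangles $(x_1,x_2]\times(y_2,1]$ and $(x_2,1]\times(y_1,y_2]$, and uses the strips $[0,x_1]\times(y_1,y]$ and $(x_1,x]\times[0,y_1]$ so that all marginals are unchanged.
\item It is supported in $\{x<y\}$, away from the squares $[0,t]^2$, so the diagonal stays $\delta$.
\end{itemize}
Because the four rectangles in \eqref{eq:aux_lemma_3.5} have positive $\mu_P$-measure, $\tilde P$ keeps the boundary conditions and the monotonicity and $1$-Lipschitz property in each variable, provided $\theta$ is small compared with those masses. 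Hence $\tilde P$ is a quasi-copula. It is not a copula, since $V_{\tilde P}(R_r)<0$.

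\emph{Key inequality.} $f_s(Q)$ is a proper quasi-copula as soon as
\[
V_{f_s(Q)}(R_r)=(1-s)V_Q(R_r)+sV_{\tilde P}(R_r)<0 .
\]
Here $V_Q(R_r)$ can be as large as order $r$ for a copula $Q$. So we must balance three quantities: $s$, which must be small to keep $d_\infty(f_s(Q),Q)\le s$ small; the side length $r$ of the square; and the negative mass $\theta=\theta(r)$ that $\tilde P$ can carry on $R_r$. I expect this to be the main obstacle.

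The Lipschitz constraint only bounds $\theta$ by the masses of the adjacent strips, which are themselves of order $r$. A single square therefore gives $V_{\tilde P}(R_r)\approx -c\,r$, compared with $V_Q(R_r)\le r$. I would first try to make $c$ large relative to $(1-s)/s$ by exploiting the positive density of $\frac1m\nu_1\otimes\nu_2$ near $(x_1,y_1)$. If this is not possible for small $s$, the fallback is to let $\tilde P$ depend on $Q$ continuously. Concretely, in a small neighbourhood of $R_r$ we would replace the $Q$-volumes by those of $\tilde P$ through a continuous cut-and-paste, keeping the diagonal and marginals. This works because $\delta$ is untouched off the diagonal, and $\|f(Q)-Q\|_\infty\le 2r$ by the Lipschitz property. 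Once a continuous, diagonal-preserving map into $\qdb^\delta\setminus\cdb^\delta$ that moves points by at most $O(r)$ (or $O(s)$) is in hand, letting $r\to0$ (or $s\to0$) shows that $\cdb^\delta$ is a $Z$-set of $\qdb^\delta$.
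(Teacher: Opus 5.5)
There is a genuine gap. Your main route, the affine maps $f_s(Q)=(1-s)Q+s\tilde P$ certified by the single square $R_r$, cannot produce maps arbitrarily close to the identity. For every quasi-copula $\tilde P$ you have
\[
V_{\tilde P}(R_r)=\bigl[\tilde P(x_2,y_2)-\tilde P(x_1,y_2)\bigr]-\bigl[\tilde P(x_2,y_1)-\tilde P(x_1,y_1)\bigr]\ge 0-r .
\]
So the constant $c$ in $V_{\tilde P}(R_r)\approx -c\,r$ is at most $1$, however you exploit the density of $\frac1m\nu_1\otimes\nu_2$. Your key inequality for all $Q$ therefore forces $\sup_{Q\in\cdb^\delta}V_Q(R_r)<\frac{s}{1-s}\,r$, and nothing in your construction controls this supremum. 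It can in fact equal $r$. For $\delta=\delta_W$, every copula carried by $[0,\frac12]\times[\frac12,1]\cup[\frac12,1]\times[0,\frac12]$ with uniform marginals has diagonal $\delta_W$. The squares of Lemma~\ref{lem:aux.copula} then lie in the first block, so some $Q\in\cdb^{\delta_W}$ charges $R_r$ with mass $r$, and you would need $s>\frac12$.

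The description of $\nu_r$ is also incomplete. Removing $\theta$ from $R_r$ and adding it above and to the right leaves a surplus $\theta$ in the marginals on $(x_2,1]$ and $(y_2,1]$. The left and bottom strips cannot absorb this surplus.

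Your fallback, a $Q$-dependent local modification near $R_r$, is the right idea and is what the paper does. As stated, though, it is not a proof, because it omits the two ingredients that make it work.

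First, ``replacing the $Q$-volumes by those of $\tilde P$'' is not well defined for a general $Q\in\qdb^\delta$, since quasi-copulas need not induce signed measures. Moreover, transplanting volumes of a fixed $\tilde P$ changes $Q$ outside the modified region unless the marginal increments match those of $Q$, and those depend on $Q$. The paper instead works with function values. It sets $Q_r=(1-r)Q+rP$ and replaces $Q_r$ on $R_r$ by the envelope
\[
E=\max\{Q_r(x_1,y),\,Q_r(x,y_1),\,Q_r(x_2,y)-(x_2-x),\,Q_r(x,y_2)-(y_2-y)\}.
\]
This envelope agrees with $Q_r$ on $\partial R_r$, is non-decreasing and $1$-Lipschitz, leaves the diagonal untouched, and depends $1$-Lipschitzly on $Q$.

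Second, you need strictness. On a corner square $S$ the envelope gives $V(S)=-\min\{A-a,B-a\}$, which is $0$ whenever $Q$ is flat along the left or bottom edge of $R_r$. The pre-mixing with $P$ is what forces $A-a>0$ and $B-a>0$, and what makes the last two terms of the max inactive near $(x_1,y_1)$. That, and not a mass transfer, is the role of the positive $\mu_P$-masses of the rectangles in \eqref{eq:aux_lemma_3.5}.
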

\begin{proof}
Since $\qdb^\delta$ is compact, it suffices to construct, for all sufficiently small $r>0$, 
a continuous map $f_r:\qdb^\delta\to\qdb^\delta\setminus\cdb^\delta$ with 
$d_\infty(f_r(Q),Q)\le 3r$ for every $Q\in\qdb^\delta$. 
Let $P$, $x_1,y_1$, $r_0$ and $R_r=[x_1,x_2]\times[y_1,y_2]$ be as in Lemma~\ref{lem:aux.copula} and fix $r\in(0,r_0]$.

For $Q\in\qdb^\delta$ let $Q_r:=(1-r)Q+rP\in\qdb^\delta$ and let $f_r(Q)$ 
coincide with $Q_r$ outside $R_r$ and with
$$
E(x,y):=\max\big\{Q_r(x_1,y),\,Q_r(x,y_1),\,Q_r(x_2,y)-(x_2-x),\,Q_r(x,y_2)-(y_2-y)\big\}
$$
on $R_r$. As a maximum of such functions, $E$ is non-decreasing and $1$-Lipschitz in each argument; 
by the same properties of $Q_r$ we have $E=Q_r$ on $\partial R_r$ and 
$Q_r(x_1,y_1)\le E\le Q_r\le Q_r(x_2,y_2)$ on $R_r$. Since $R_r$ lies in the interior of 
$[0,1]^2$ and does not meet the diagonal, $f_r(Q)$ is a quasi-copula with diagonal $\delta$. 
Moreover, $d_\infty(f_r(Q),Q)\le r+(x_2-x_1)+(y_2-y_1)=3r$, and $f_r$ is continuous since 
$d_\infty(f_r(Q),f_r(Q'))\le d_\infty(Q,Q')$.

Now, $f_r(Q)$ is not a copula. In fact, for every quasi-copula the four quantities
$Q(x_1,y)-Q(x_1,y_1)$, $Q(x,y_1)-Q(x_1,y_1)$, $(x_2-x_1)-(Q(x_2,y)-Q(x_1,y))$ and 
$(y_2-y_1)-(Q(x,y_2)-Q(x,y_1))$ are non-negative, and for $P$ they are bounded from below by 
the $\mu_P$-measures of the rectangles in \eqref{eq:aux_lemma_3.5}. Hence they are positive for $Q_r$ whenever 
$x>x_1$, $y>y_1$, and there is some $\kappa>0$ with 
$Q_r(x_2,y)-(x_2-x)\le Q_r(x_1,y)+(x-x_1)-\kappa$ and $Q_r(x,y_2)-(y_2-y)\le Q_r(x,y_1)+(y-y_1)-\kappa$ 
on $R_r$. Thus, for $\eta\in(0,\min\{\kappa,r\})$, on $S:=[x_1,x_1+\eta]\times[y_1,y_1+\eta]$ we have 
$E(x,y)=\max\{Q_r(x_1,y),Q_r(x,y_1)\}$, and with $a:=Q_r(x_1,y_1)$, $A:=Q_r(x_1,y_1+\eta)$, 
$B:=Q_r(x_1+\eta,y_1)$ we get
$$
V_{f_r(Q)}(S)=\max\{A,B\}-A-B+a=-\min\{A-a,\,B-a\}<0,
$$
where $V_{f_r(Q)}(S)$ denotes the $f_r(Q)$-volume of $S$. Hence $f_r(Q)\notin\cdb^\delta$, 
which completes the proof.
\end{proof}}


Having this, applying Theorem \ref{thm:main.tool} yields the following result:
\begin{thm}\label{thm:main.biv.fixed.diag2}
Let $\delta \in \mathcal{D}\setminus \{\delta_M\}$ be arbitrary but fixed. 
Then \FD{there} exists a homeomorphism $h : [0,1] \times \mathcal{H} \to \qdb^\delta$ with 
$h(\{0\} \times \mathcal{H})= \cdb^{\delta}$. Moreover, $\cdb^{\delta}$ has the 
DC and the FP property and  is a retract of $\qdb^\delta$.
\end{thm}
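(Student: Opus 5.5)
The plan is to apply Theorem~\ref{thm:main.tool} with $n=2$, $A_1=\cdb^{\delta}$, $A_2=\qdb^{\delta}$ and $N=C([0,1]^2)$ equipped with $\Vert\cdot\Vert_\infty$. For this we need to check four hypotheses: (a) both sets are convex and compact, (b) $\cdb^\delta\subset\qdb^\delta$, (c) $\operatorname{ind}\cdb^\delta=\infty$, and (d) $\cdb^\delta$ is a $Z$-set of $\qdb^\delta$.

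Points (a) and (b) are immediate from the discussion at the beginning of this section. There $\cdb^\delta$ and $\qdb^\delta$ were observed to be convex and closed subsets of the compact spaces $(\cdb,d_\infty)$ and $(\qdb,d_\infty)$, and every copula is a quasi-copula. The inclusion is strict, since Lemma~\ref{lem:Zset.diag.quasi} produces elements of $\qdb^\delta\setminus\cdb^\delta$.

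For (c), the lemma preceding Theorem~\ref{thm:main.biv.fixed.diag} gives infinitely many linearly independent elements in $\cdb^\delta$, and it is convex. The second part of Lemma~\ref{lem:dimension} therefore yields $\operatorname{ind}\cdb^\delta=\infty$. For (d) we invoke Lemma~\ref{lem:Zset.diag.quasi} directly; this is where the real work lies, namely the local modification $E$ on the small square $R_r$ off the diagonal, which produces negative volume. Since that lemma is already established, no step here is a genuine obstacle.

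Theorem~\ref{thm:main.tool} then gives a homeomorphism $h:[0,1]\times\mathcal{H}\to\qdb^\delta$ with $h(\{0\}\times\mathcal{H})=\cdb^\delta$. It also gives that $\cdb^\delta$ has the DC and FP properties, being homeomorphic to $\mathcal{H}$, and that $\cdb^\delta$ is a retract of $\qdb^\delta$. For the retract statement one can also argue directly: compose $h^{-1}$, the retraction $(t,\xb)\mapsto(0,\xb)$ of $[0,1]\times\mathcal{H}$ onto $\{0\}\times\mathcal{H}$, and $h$.
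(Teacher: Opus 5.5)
Your proposal is correct and matches the paper's argument: the paper derives the theorem directly from Theorem~\ref{thm:main.tool} with $n=2$, $A_1=\cdb^{\delta}$, $A_2=\qdb^{\delta}$, using the lemma preceding Theorem~\ref{thm:main.biv.fixed.diag} together with Lemma~\ref{lem:dimension} to get $\operatorname{ind}\cdb^{\delta}=\infty$, and Lemma~\ref{lem:Zset.diag.quasi} for the $Z$-set property. Your explicit retraction $h\circ(\text{projection onto }\{0\}\times\mathcal{H})\circ h^{-1}$ is a valid direct check of the retract claim.
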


\subsection{Lower semilinear copulas}
The study of lower semilinear copulas (LSL, for short) goes back to \cite{DuranteKolesarovaMesiarSempi2008}. LSL copulas are in one-to-one correspondence with 
the following subclass $\mathcal{D}^{LSL}$ of $\mathcal{D}$: 
\[
\mathcal{D}^{LSL} := \left\{ \delta \in \mathcal{D} : \varphi_\delta \text{ non-decreasing}, \ \eta_\delta \text{ non-increasing} \right\}. \FD{\label{eq:DLSL}}
\]
Thereby the functions $\varphi_\delta, \eta_\delta: (0,1] \to [0,\infty)$ are given by
\[
\varphi_\delta(x) := \tfrac{\delta(x)}{x}, \qquad \eta_\delta(x) := \tfrac{\delta(x)}{x^2}.
\]
For every fixed $\delta \in \FD{\mathcal{D}^{LSL}}$ the corresponding LSL copula $S_\delta$ is then defined as
\begin{equation}\label{eq:LSL}
S_\delta(x,y) :=
\begin{cases}
y\,\tfrac{\delta(x)}{x} & \text{if } y \le x, \\[2mm]
x\,\tfrac{\delta(y)}{y} & \text{otherwise.}
\end{cases}
\end{equation}
We will denote the family of all LSL copulas by $\cdb^{LSL}$. 
Apart from their handy analytic form \eqref{eq:LSL}, according to \cite[Theorem 4.1]{MT} 
$\cdb^{LSL}$ is one of the very few (nonparametric) classes closed with respect to the 
so-called Markov product (a.k.a. star product) of copulas, i.e., for any pair of diagonals 
$\delta_1,\delta_2 \in \mathcal{D}^{LSL}$ the Markov product $S_{\delta_1}*S_{\delta_2}$ 
is an LSL copula too. 

\begin{lem}\label{lem:lsl}
$\cdb^{LSL}$ is a compact, convex subset of $(\cdbs,d_\infty)$ which contains infinitely many 
linearly independent elements. Moreover, $\cdb^{LSL}$ is a $Z$-set of $\cdbs$.
\end{lem}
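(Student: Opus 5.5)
The plan is to transfer everything to the level of diagonals via the map $\Phi:\mathcal{D}^{LSL}\to\cdb$, $\Phi(\delta)=S_\delta$, given by \eqref{eq:LSL}, and to prove the $Z$-set property directly with ordinal sums, as in Lemma~\ref{lem:diagonals.general}.

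\textbf{Properties of $\Phi$.} First, \eqref{eq:LSL} is symmetric in $(x,y)$, so $\cdb^{LSL}\subseteq\cdbs$. Second, $\Phi$ is affine (indeed linear in $\delta$), so $\cdb^{LSL}$ is convex as soon as $\mathcal{D}^{LSL}$ is. Third, $\Phi$ is Lipschitz. For $y\le x$ we have
$$
|S_{\delta}(x,y)-S_{\delta'}(x,y)|=\tfrac{y}{x}\,|\delta(x)-\delta'(x)|\le \Vert\delta-\delta'\Vert_\infty ,
$$
and symmetrically for $y>x$. Hence $d_\infty(S_\delta,S_{\delta'})\le\Vert\delta-\delta'\Vert_\infty$.

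\textbf{Convexity and compactness of $\mathcal{D}^{LSL}$.} The maps $\delta\mapsto\varphi_\delta$ and $\delta\mapsto\eta_\delta$ are linear. Monotonicity of a prescribed type is preserved under convex combinations, so $\mathcal{D}^{LSL}$ is convex. Monotonicity is also preserved under pointwise limits on $(0,1]$, and uniform convergence $\delta_n\to\delta$ implies pointwise convergence of $\varphi_{\delta_n}$ and $\eta_{\delta_n}$ there. Thus $\mathcal{D}^{LSL}$ is a closed subset of the compact set $\mathcal{D}$, hence compact. Consequently $\cdb^{LSL}=\Phi(\mathcal{D}^{LSL})$ is compact as a continuous image; in particular it is closed in $\cdbs$.

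\textbf{Linear independence.} For $a\in[1,2]$ the diagonals $\delta_a(x)=x^a$ from \eqref{eq:deltaa} lie in $\mathcal{D}^{LSL}$, because $\varphi_{\delta_a}(x)=x^{a-1}$ is non-decreasing and $\eta_{\delta_a}(x)=x^{a-2}$ is non-increasing. Suppose $\sum_i e_iS_{\delta_{a_i}}\equiv0$. Restricting to the diagonal, where $S_\delta(t,t)=\delta(t)$, gives $\sum_ie_i t^{a_i}\equiv0$, so all $e_i=0$ by the argument in Lemma~\ref{lem:diagonals.general}. Lemma~\ref{lem:dimension} then yields infinitely many linearly independent elements.

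\textbf{$Z$-set property.} For $b\in(0,1)$ define $f_b:\cdbs\to\cdbs$ by $f_b(C)=\mathcal{OS}_b(C,W)$. This map is continuous, and it preserves exchangeability because $C$ and $W$ are symmetric and \eqref{eq:os} is symmetric in the arguments. By the Lipschitz property of copulas, $d_\infty(f_b(C),C)\le 3(1-b)$. The diagonal of $f_b(C)$ is the function $f_b(\delta_C)$ from Lemma~\ref{lem:diagonals.general}, which is constantly equal to $b$ on $[b,\frac{1+b}{2}]$. Hence its $\varphi$-function equals $b/x$ on that interval and is strictly decreasing there. So $f_b(C)$ is not LSL: if it were, its diagonal would lie in $\mathcal{D}^{LSL}$. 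Therefore $f_b$ maps $\cdbs$ into $\cdbs\setminus\cdb^{LSL}$. Since $\cdbs$ is compact, constant $\varepsilon$ suffices, and choosing $b$ close to $1$ shows that $\cdb^{LSL}$ is a $Z$-set.

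The only mildly delicate point is closedness of $\mathcal{D}^{LSL}$, which is needed for the $Z$-set property. It is handled by the pointwise-limit argument above, since the monotonicity conditions only involve $x\in(0,1]$.
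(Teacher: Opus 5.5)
Your proposal is correct and follows essentially the same route as the paper. It transfers everything to diagonals via the affine injection $\delta\mapsto S_\delta$ on the closed convex set $\mathcal{D}^{LSL}$, obtains linear independence by restricting $\sum_i e_iS_{\delta_{a_i}}$ to the diagonal with $\delta_a(x)=x^a$, and proves the $Z$-set property with $C\mapsto\mathcal{OS}_b(C,W)$, whose diagonal is constant on $[b,\frac{1+b}{2}]$ so that $\varphi_\delta$ is strictly decreasing there. Your extra details, namely the bound $d_\infty(S_\delta,S_{\delta'})\le\Vert\delta-\delta'\Vert_\infty$ and the pointwise-limit argument for closedness of $\mathcal{D}^{LSL}$, only make explicit what the paper calls obvious.
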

\begin{proof}
Obviously $\mathcal{D}^{LSL}$ is a closed and convex subset of 
$(\mathcal{D},\Vert \cdot \Vert_\infty)$, moreover the mapping 
$\iota: \mathcal{D}^{LSL} \to \cdb^{LSL}$ assigning each diagonal its LSL copula 
is injective and continuous and preserves convex combinations, so 
$\cdb^{LSL}$ is a convex and compact subset of $(\cdbs,d_\infty)$. \\
We will work with the family $\mathcal{D}^\star \subset \mathcal{D}^{LSL}$ and 
show that $\{S_{\delta_a}: a \in (1,2)\}$ is linearly independent: 
Let $a_1,\ldots,a_n \in (1,2)$ be pairwise different and assume that there are constants 
$e_1,\ldots,e_n \in \mathbb{R}$ such that we have 
$\sum_{i=1}^n e_i \FD{S}_{\delta_{a_i}} = \mathbf{0} \in C([0,1]^2)$. 
Then for every $x \in [0,1]$ we have 
$$
0=\sum_{i=1}^n e_i \FD{S}_{\delta_{a_i}}(x,x) = \sum_{i=1}^n e_i \delta_{a_i}(x)=
\sum_{i=1}^n e_i x^{a_i}. 
$$  
Since the family $\mathcal{D}^\star$ is linearly independent in 
$C([0,1],\Vert \cdot \Vert_\infty)$ we conclude that $e_1=e_2=\ldots=e_n=0$, and it remains
to show that $\cdb^{LSL}$ is a $Z$-set of $\cdbs$. 
The latter, however, is easy to show since, e.g., for every $b \in (0,1)$, the 
mapping $f_b: \cdbs \to \cdbs$, defined by 
$f_b(\FD{C})=\mathcal{OS}_b(C,W)$, maps $\cdbs $ into $\cdbs \setminus \cdb^{LSL}$ \FD{(on $(b,\frac{1+b}{2}]$ the diagonal of $f_b(C)$ is constant, so $\varphi_\delta$ is strictly decreasing there)}.
For sufficiently large $b$, obviously $f_b$ is arbitrarily close to the identity on 
$\FD{\cdbs}$, so the proof is complete. 
\end{proof}

Applying Theorem \ref{thm:main.tool} once more yields the following: 
\begin{thm}\label{thm:lsl}
There exists a homeomorphism $h : [0,1] \times \mathcal{H} \to \cdbs$ with 
$h(\{0\} \times \mathcal{H})= \cdb^{LSL}$. Moreover, $\cdb^{LSL}$ has the DC and the FP property and
 is a retract of $\cdbs$.
\end{thm}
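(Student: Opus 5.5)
Theorem~\ref{thm:lsl} is an immediate consequence of Theorem~\ref{thm:main.tool} applied with $n=2$, $A_1=\cdb^{LSL}$, $A_2=\cdbs$ and $N=C([0,1]^2)$ with the supremum norm. The plan is to check the hypotheses of Theorem~\ref{thm:main.tool} one by one, most of which are already supplied by Lemma~\ref{lem:lsl}.

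\emph{Convexity and compactness.} The set $\cdbs$ is convex, since exchangeability is preserved under convex combinations of copulas. It is compact, because it is the closed subset of the compact space $(\cdb,d_\infty)$ cut out by the pointwise-closed condition $C(x,y)=C(y,x)$. Lemma~\ref{lem:lsl} gives that $\cdb^{LSL}\subset\cdbs$ is convex and compact. Note that LSL copulas are symmetric by \eqref{eq:LSL}, so the inclusion indeed holds.

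\emph{Infinite dimension.} Lemma~\ref{lem:lsl} also shows that $\cdb^{LSL}$ contains infinitely many linearly independent elements, namely the $S_{\delta_a}$ with $a\in(1,2)$. The second assertion of Lemma~\ref{lem:dimension} then gives $\operatorname{ind}\cdb^{LSL}=\infty$.

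\emph{$Z$-set property.} The remaining hypothesis is that $\cdb^{LSL}$ is a $Z$-set of $\cdbs$. This is the last statement of Lemma~\ref{lem:lsl}, witnessed by the maps $f_b(C)=\mathcal{OS}_b(C,W)$. These maps preserve exchangeability, since both $C$ and $W$ are symmetric. They satisfy $d_\infty(f_b(C),C)\le 3(1-b)$. Finally, they avoid $\cdb^{LSL}$, because the diagonal of $f_b(C)$ is constant on $(b,\frac{1+b}{2}]$, so $\varphi_\delta$ fails to be non-decreasing there.

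\emph{Conclusion.} Theorem~\ref{thm:main.tool} now yields the homeomorphism $h:[0,1]\times\mathcal{H}\to\cdbs$ with $h(\{0\}\times\mathcal{H})=\cdb^{LSL}$. It also yields the DC and FP properties of $\cdb^{LSL}$ and that $\cdb^{LSL}$ is a retract of $\cdbs$.

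There is no real obstacle here. The only point needing a moment's care is confirming that the ambient family $\cdbs$, and not $\cdb$, is compact and convex, and that the perturbing maps stay inside $\cdbs$; both are routine.
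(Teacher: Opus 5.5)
Your proposal is correct and matches the paper's approach: the paper also obtains Theorem~\ref{thm:lsl} by applying Theorem~\ref{thm:main.tool} with $n=2$, $A_1=\cdb^{LSL}$, $A_2=\cdbs$, taking compactness, convexity, infinitely many linearly independent elements and the $Z$-set property from Lemma~\ref{lem:lsl}. Your extra checks (compactness and convexity of $\cdbs$, symmetry of $S_\delta$, and that $\mathcal{OS}_b(C,W)$ stays exchangeable) are routine details the paper leaves implicit.
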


\subsection{A family of copulas with convex diagonals}

In \cite{Durante2011} it was shown that every $\delta \in \mathcal{D}_{con}$ induces 
a copula $H_\delta \in \cdbs$, given by
\begin{equation}\label{eq:gdelta}
H_\delta(x,y)=\min \left\{x,y,\delta(\tfrac{x+y}{2}) \right\}.
\end{equation}
We will let $\cdb^{c\delta}$ denote the family of all copulas $H_\delta$ with 
$\delta \in \mathcal{D}_{con}$. It is straightforward to verify that no copula $H_\delta$ can 
have full support\FD{: for $\delta=\delta_W$ we have $H_{\delta_W}=W$, and for $\delta\neq\delta_W$ convexity yields $\delta(\frac{1+x_0}{2})>x_0$ for some $x_0\in(0,1)$, so there is $(x,y)$ with $0<x$, $y<1$ and $H_\delta(x,y)=x$, i.e., $\mu_{H_\delta}([0,x]\times(y,1])=0$.}
Obviously the mapping assigning each $\delta \in \mathcal{D}_{con}$ 
the copula $H_\delta \in \cdb^{c\delta}$ is continuous
and bijective. Using compactness of $\mathcal{D}_{con}$ it therefore follows that 
$(\mathcal{D}_{con},\Vert \cdot \Vert_\infty)$ and $(\cdb^{c\delta},d_\infty)$ are homeomorphic, 
implying compactness of $(\cdb^{c\delta},d_\infty)$. 
We want to show that $(\cdb^{c\delta},d_\infty)$ is homeomorphic to the Hilbert cube 
$(\mathcal{H},\rho)$. Doing so, however, we can not proceed like in the previous sections since 
$\cdb^{c\delta}$ is not convex. 
In fact, considering $\delta:=\frac{1}{2}(\delta_M + \delta_W) \in \mathcal{D}_{con}$, using  
$\FD{H}_{\delta_W}=W,\FD{H}_{\delta_M}=M$, and setting $(x_0,y_0)=(\frac{1}{4},\frac{1}{2})$ yields
$$
\tfrac{1}{2}(M+W)(\tfrac{1}{4},\tfrac{1}{2}) = \tfrac{1}{8} \neq \tfrac{3}{16}=
M(\tfrac{1}{4},\tfrac{3}{16}) = \min \left\{\tfrac{1}{4},\tfrac{1}{2},\delta(\tfrac{3}{8}) \right\}
=H_\delta(\tfrac{1}{4},\tfrac{1}{2}).
$$
\FD{Since the diagonal of $\frac12(M+W)$ is $\delta$ and the only element of $\cdb^{c\delta}$ with diagonal $\delta$ is $H_\delta$, it follows that $\frac12(M+W)\notin\cdb^{c\delta}$.} We will therefore argue via $\mathcal{D}_{con}$ to prove the following lemma: 
\begin{lem}\label{lem:cdelta}
$\cdb^{c\delta}$ is homeomorphic to $\mathcal{H}$ and a $Z$-set of $\cdbs$.
\end{lem}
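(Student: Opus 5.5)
The homeomorphism part is immediate from what precedes the statement. The map $\Phi:\mathcal{D}_{con}\to\cdb^{c\delta}$, $\delta\mapsto H_\delta$, is continuous and bijective on a compact space, hence a homeomorphism. By Theorem~\ref{thm:main.diagonals} (or directly by Lemma~\ref{lem:diagonals.general} together with Theorem~\ref{thm:keller}), $\mathcal{D}_{con}$ is homeomorphic to $\mathcal{H}$, and therefore so is $\cdb^{c\delta}$. Closedness of $\cdb^{c\delta}$ in $\cdbs$ follows from compactness. Note also that $\cdb^{c\delta}\subseteq\cdbs$, since $H_\delta$ is symmetric in $x,y$.

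For the $Z$-set property it suffices, since $\cdbs$ is compact, to produce for every $\varepsilon>0$ a continuous $f:\cdbs\to\cdbs\setminus\cdb^{c\delta}$ with $\sup_C d_\infty(f(C),C)<\varepsilon$. I would reuse the ordinal sum trick from Lemma~\ref{lem:lsl} and set $f_b(C):=\mathcal{OS}_b(C,W)$ for $b\in(0,1)$. This map is continuous, preserves exchangeability, and satisfies $d_\infty(f_b(C),C)\le 3(1-b)$ by the Lipschitz property, as in the earlier lemmas. It remains to check that $f_b(C)\notin\cdb^{c\delta}$.

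Every element of $\cdb^{c\delta}$ has a convex diagonal, because the diagonal of $H_\delta$ is $\min\{t,\delta(t)\}=\delta(t)$. The diagonal of $f_b(C)$, however, is exactly $f_b(\delta_C)$ from the proof of Lemma~\ref{lem:diagonals.general}. That function is constant equal to $b$ on $[b,\frac{1+b}{2}]$ while vanishing at $0$, so it is not convex. Hence $f_b(C)\notin\cdb^{c\delta}$ for every $C$. Choosing $b$ close to $1$ completes the $Z$-set argument.

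The only delicate point is the non-convexity of $\cdb^{c\delta}$, which prevents a direct appeal to Theorem~\ref{thm:main.tool}. That point is handled by transporting through $\Phi$ rather than using convexity. The $Z$-set part needs no convexity at all, since the definition only involves maps close to the identity. If one then wants the unknotting consequence, Remark~\ref{rem:alternative.main} applies, because $\cdbs$ is convex and compact with infinite ind and hence homeomorphic to $\mathcal{H}$.
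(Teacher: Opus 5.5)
Your proof is correct. The homeomorphism part is the paper's argument: transport through $\delta\mapsto H_\delta$ and apply Theorem~\ref{thm:main.diagonals}. For the $Z$-set part, however, you use a different map and a different obstruction.

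The paper takes the affine map $f_b(C)=bC+(1-b)\Pi$, whose distance to the identity is trivially at most $1-b$. It then argues that $f_b(C)$ has full support while no $H_\delta$ does. The latter fact is the observation made just before the lemma: for convex $\delta\neq\delta_W$, some rectangle $[0,x]\times(y,1]$ carries no $\mu_{H_\delta}$-mass.

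You instead use $C\mapsto\mathcal{OS}_b(C,W)$ and look only at diagonals. Every $H_\delta$ has diagonal $\delta$, which is convex. The diagonal of $\mathcal{OS}_b(C,W)$ is the non-convex function $f_b(\delta_C)$ from Lemma~\ref{lem:diagonals.general}, so no image lies in $\cdb^{c\delta}$.

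What your route buys is that it needs no support analysis of $H_\delta$. It also proves more: the whole closed set $\{C\in\cdbs:\delta_C\in\mathcal{D}_{con}\}$, which contains $\cdb^{c\delta}$, is a $Z$-set of $\cdbs$. It is exactly the argument the paper uses for $\cdb^{bic}$ in Lemma~\ref{lem:bic}.

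The paper's route pays for the small full-support check but gains portability. Its $\Pi$-mixture argument carries over verbatim to $\cdb^{diag}$ and $\cdb^{Ber}$ in Lemma~\ref{lem:bertino}. There a diagonal-based obstruction would fail, since those families realize every $\delta\in\mathcal{D}$.
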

\begin{proof}
According to Theorem \ref{thm:main.diagonals} $\mathcal{D}_{con}$ is homeomorphic to 
the Hilbert cube $\mathcal{H}$.  Hence, using the fact that 
$(\mathcal{D}_{con},\Vert \cdot \Vert_\infty)$ and $(\cdb^{c\delta},d_\infty)$ are 
homeomorphic, directly yields that $(\cdb^{c\delta},d_\infty)$ is homeomorphic to 
$(\mathcal{H},\rho)$ too.\\
For showing that $\cdb^{c\delta}$ is a $Z$-set of $\cdbs$ we proceed as follows: 
for every 
$b \in (0,1)$ consider the mapping $f_b: \cdbs \to \cdbs$, given by
$f_b(C)=bC + (1-b)\Pi$. Then for every $b \in (0,1)$ and every $C \in \cdbs$ the copula 
$f_b(C)$ has full support, so it can not be an element of the family $\cdb^{c\delta}$ \FD{(as $b\to1$, $d_\infty(f_b(C),C)\le 1-b$)}, i.e., 
$f_b$ maps $\cdbs$ into $\cdbs \setminus \cdb^{c\delta} $. Proceeding as in the proofs of the 
previous lemmas yields the desired results.  
\end{proof}

Having this, applying Theorem \ref{thm:main.tool} (in the version sketched in 
Remark \ref{rem:alternative.main}) yields the following result:
\begin{thm}\label{thm:main.cdelta}
There exists a homeomorphism $h : [0,1] \times \mathcal{H} \to \cdbs$ with 
$h(\{0\} \times \mathcal{H})= \cdb^{c\delta}$. Moreover, $\cdb^{c\delta}$ has the DC and the FP property and
 is a retract of $\cdbs$.
\end{thm}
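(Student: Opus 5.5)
The plan is to apply Theorem~\ref{thm:main.tool} in the non-convex form of Remark~\ref{rem:alternative.main}, with $n=2$, $A_1=\cdb^{c\delta}$ and $A_2=\cdbs$, both viewed as compact subsets of the metric space $(\cdbs,d_\infty)$. Lemma~\ref{lem:cdelta} already gives the two properties of $A_1$ that matter: it is homeomorphic to $\mathcal{H}$, and it is a $Z$-set of $\cdbs$. Compactness of $A_1$ was established before Lemma~\ref{lem:cdelta}, via the homeomorphism with $\mathcal{D}_{con}$. It therefore remains to show that $\cdbs$ itself is homeomorphic to $\mathcal{H}$ and to make sure that the argument behind Theorem~\ref{thm:main.tool} goes through without convexity.

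For $\cdbs$ I would use Theorem~\ref{thm:keller}. The set $\cdbs$ is convex and closed in $C([0,1]^2)$, since exchangeability is preserved under convex combinations and uniform limits. It is therefore compact. It contains the LSL copulas $S_{\delta_a}$, $a\in(1,2)$, which were shown in the proof of Lemma~\ref{lem:lsl} to be linearly independent. Lemma~\ref{lem:dimension} then gives $\operatorname{ind}\cdbs=\infty$, and hence $\cdbs\cong\mathcal{H}$.

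The homeomorphism is obtained as follows. Fix a homeomorphism $\phi:\cdbs\to\mathcal{H}$. Homeomorphisms preserve $Z$-sets, so $\phi(\cdb^{c\delta})$ is a $Z$-set of $\mathcal{H}$ that is homeomorphic to $\mathcal{H}$. The face $\{0\}\times\mathcal{H}$ of $[0,1]\times\mathcal{H}\cong\mathcal{H}$ is also a $Z$-set, since it is the image of a set of the form $\{x_i=-1\}$ in $\mathcal{H}$. I would choose any homeomorphism $g:\{0\}\times\mathcal{H}\to\phi(\cdb^{c\delta})$ and extend it by Theorem~\ref{thm:unknot} to a homeomorphism $g^*:[0,1]\times\mathcal{H}\to\mathcal{H}$. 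Setting $h:=\phi^{-1}\circ g^*$ then gives $h(\{0\}\times\mathcal{H})=\cdb^{c\delta}$.

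The remaining claims follow directly. The DC and FP properties of $\cdb^{c\delta}$ follow from $\cdb^{c\delta}\cong\mathcal{H}$. The retract property follows because $\{0\}\times\mathcal{H}$ is a retract of $[0,1]\times\mathcal{H}$ via $(t,\mathbf{x})\mapsto(0,\mathbf{x})$, and transporting this retraction through $h$ gives a retraction of $\cdbs$ onto $\cdb^{c\delta}$. The main point needing care is that convexity was used in Theorem~\ref{thm:main.tool} only to identify the $A_i$ with $\mathcal{H}$. Here that identification is supplied by Lemma~\ref{lem:cdelta} for $A_1$ and by Keller's theorem for the convex set $A_2$, so the unknotting argument applies unchanged.
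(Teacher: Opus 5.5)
Your proposal is correct and is essentially the paper's argument: the paper obtains the theorem by applying Theorem~\ref{thm:main.tool} in the form of Remark~\ref{rem:alternative.main} with $A_1=\cdb^{c\delta}$ and $A_2=\cdbs$, using Lemma~\ref{lem:cdelta}. You simply unpack that citation by hand: you verify explicitly, via the linearly independent LSL copulas and Theorem~\ref{thm:keller}, that $\cdbs$ is homeomorphic to $\mathcal{H}$, and you carry out the $Z$-set unknotting step (Theorem~\ref{thm:unknot}) and the retraction yourself, both of which the paper leaves implicit.
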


\subsection{Biconic copulas}

For every $\delta \in \mathcal{D}$ define the function $F_\delta: [0,1]^2 \to [0,1]$ by
\begin{equation}\label{eq:bic}
F_\delta(x,y)=
\begin{cases}
\delta\left(\tfrac{x}{x+1-y}\right)(x+1-y), & \text{ if } x\le y,\\[3mm]
\delta\left(\tfrac{y}{y+1-x}\right)(y+1-x), & \text{ if } x> y.
\end{cases}
\end{equation}
According to \cite{JwaidDeBaetsDeMeyer2012} $F_\delta$ is a bivariate copula if and only if
$\delta$ is convex. Letting $\cdb^{bic}$ denote the family of all biconic copulas \FD{$F_\delta$, $\delta\in\mathcal{D}_{con}$}, we
obviously have that $F_\delta \in \cdb^{\delta}$ for every $\delta \in \mathcal{D}_{con}$.  
As a direct consequence, the mapping $\iota: \mathcal{D}_{con} \to \cdb^{bic}$ is 
injective and preserves convex combinations. Since $\iota$ obviously is continuous 
it follows that $(\cdb^{bic},d_\infty)$ and $(\mathcal{D}_{con},\Vert \cdot \Vert_\infty)$
are homeomorphic compact metric spaces. 

\begin{lem}\label{lem:bic}
$\cdb^{bic}$ is a compact, convex subset of $(\cdb,d_\infty)$ containing infinitely many 
linearly independent elements. Moreover, $\cdb^{bic}$ is a $Z$-set of $\cdb$.
\end{lem}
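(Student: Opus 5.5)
We need three things: convexity and compactness, infinitely many linearly independent elements, and the $Z$-set property.

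For the first two we use the map $\iota:\mathcal{D}_{con}\to\cdb^{bic}$, $\delta\mapsto F_\delta$. By \eqref{eq:bic}, $F_\delta$ depends affinely on $\delta$ pointwise, so $\iota$ preserves convex combinations. Since $\mathcal{D}_{con}$ is convex (Lemma \ref{lem:diagonals.general}), so is its image $\cdb^{bic}$. Continuity of $\iota$ follows from
$$
|F_\delta(x,y)-F_{\delta'}(x,y)|\le 2\Vert\delta-\delta'\Vert_\infty,
$$
because $x+1-y\le 2$. Hence $\cdb^{bic}=\iota(\mathcal{D}_{con})$ is compact. For linear independence we take the family $\{F_{\delta_a}: a\in[1,2]\}$ with $\delta_a$ as in \eqref{eq:deltaa}. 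Suppose $\sum_i e_iF_{\delta_{a_i}}=\mathbf 0$. Restricting to the diagonal, where $F_\delta(x,x)=\delta(x)$, gives $\sum_i e_i x^{a_i}\equiv 0$. Linear independence of $\mathcal{D}^\star$ then forces all $e_i=0$, exactly as in the proof of Lemma \ref{lem:lsl}.

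For the $Z$-set property we proceed as for $\cdb^{c\delta}$: we use perturbations that produce full support. Set $f_b(C):=bC+(1-b)\Pi$ for $b\in(0,1)$. Then $f_b$ maps $\cdb$ continuously into $\cdb$, and $d_\infty(f_b(C),C)\le 1-b$. The remaining task, and the main obstacle, is to show that no biconic copula has full support, so that $f_b(\cdb)\subseteq\cdb\setminus\cdb^{bic}$.

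The first attempt is the following observation. $F_\delta$ is linear along each segment joining $(0,1)$ to a point of the diagonal. On the region $x\le y$, write $x=ts$ and $y=1-s+ts$ with $s=x+1-y$ and $t=x/s$. Then $F_\delta=s\,\delta(t)$, which is linear in $s$ for fixed $t$. So the density of $\mu_{F_\delta}$ vanishes on the open triangle $\{x<y\}$ apart from mass concentrated on those segments. Checking the second mixed derivative of the $1$-homogeneous-type function $s\delta(t)$ confirms this: a function homogeneous of degree one in the two "cone coordinates" has zero Hessian determinant. Consequently $\mu_{F_\delta}$ restricted to $\{x<y\}$ is singular. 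This already contradicts full support only if it forces zero mass on some open set, which does not follow immediately.

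Since singularity alone may not give a contradiction, the fallback is to avoid the support argument altogether. Instead we reuse the ordinal-sum map $f_b(C):=\mathcal{OS}_b(C,W)$ from Lemma \ref{lem:diagonals.general}. Its diagonal $f_b(\delta_C)$ is never convex, and every element of $\cdb^{bic}$ has a convex diagonal $\delta\in\mathcal{D}_{con}$. Hence $f_b$ maps $\cdb$ into $\cdb\setminus\cdb^{bic}$, with $\sup_C d_\infty(f_b(C),C)\le 3(1-b)$. Letting $b\to1$ yields the $Z$-set property. This variant needs nothing beyond the earlier lemma, so this is the route we adopt for the final step.
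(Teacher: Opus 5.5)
Your adopted argument is correct and is essentially the paper's: compactness and convexity via the continuous affine map $\delta\mapsto F_\delta$ on $\mathcal{D}_{con}$, and linear independence by restricting to the diagonal. You use $\delta_a(x)=x^a$ where the paper uses the piecewise linear $\gamma_a(x)=\frac{x-a}{1-a}\mathbf{1}_{[a,1]}(x)$, which is an immaterial difference. The $Z$-set property then follows from $C\mapsto\mathcal{OS}_b(C,W)$, whose diagonal is never convex, exactly as in the paper.

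You should delete the abandoned full-support detour, because its claims are false and not merely incomplete. A vanishing Hessian determinant does not force $\partial^2F_\delta/\partial x\partial y=0$. Indeed, $F_{\delta_\Pi}(x,y)=x^2/(x+1-y)$ for $x\le y$ has density $2x(1-y)/(x+1-y)^3>0$ on the open triangle $\{0<x<y<1\}$, and symmetrically below the diagonal. So biconic copulas can have full support, and the map $C\mapsto bC+(1-b)\Pi$ cannot be justified that way.
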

\begin{proof}
For every $a \in (0,\frac{1}{2})$ let $\gamma_a \in \mathcal{D}$ be defined by
$$
\gamma_a(x)= \tfrac{x-a}{1-a}\, \mathbf{1}_{[a,1]}(x).
$$
It is straightforward to show that $\{\gamma_a: \, a \in (0,\frac{1}{2})\}$ is linearly independent.
In fact, suppose that for $n \in \mathbb{N}$ and $0< a_1 < a_2 < \ldots < a_n < \frac{1}{2}$
there are constants $e_1,\ldots,e_n \in \mathbb{R}$ such that 
$\sum_{i=1}^n e_i \gamma_{a_i}= \mathbf{0} \in C([0,1])$. Considering $x_1=\frac{1}{2}(a_1+a_2)$,  
we have $0=\sum_{i=1}^n e_i \gamma_{a_i}(x_1)=e_1 \gamma_{a_1}(x_1)$, so taking into account
$\gamma_{a_1}(x_1)>0$ yields $e_1=0$. Proceeding analogously, consi\-dering $x_2=\frac{1}{2}(a_2+a_3),
x_3=\frac{1}{2}(a_3+a_4),\ldots$\FD{, $x_n=\frac{1}{2}(a_n+1)$} eventually shows $e_1=e_2=\ldots=e_n=0$. \FD{Since the diagonal of $\sum_i e_iF_{\gamma_{a_i}}$ is $\sum_i e_i\gamma_{a_i}$, the copulas $F_{\gamma_{a_i}}$ are linearly independent as well.}  \\
It remains to \FD{show} that $\cdb^{bic}$ is a $Z$-set of $\cdb$, which, however, working with 
the mapping $C \mapsto \mathcal{OS}_b(C,W)$ for every $b \in (0,1)$, is straightforward 
to accomplish \FD{since the diagonal of $\mathcal{OS}_b(C,W)$ is $f_b(\delta_C)$, which is not convex (see the proof of Lemma~\ref{lem:diagonals.general})}.  
\end{proof}

Applying Theorem \ref{thm:main.tool} we obtain the following result:
\begin{thm}\label{thm:main.bic}
There exists a homeomorphism $h : [0,1] \times \mathcal{H} \to \cdb$ with 
$h(\{0\} \times \mathcal{H})= \cdb^{bic}$. Moreover, $\cdb^{bic}$ has the 
DC and the FP property and
 is a retract of $\cdb$.
\end{thm}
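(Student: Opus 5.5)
This follows directly from Theorem~\ref{thm:main.tool} with $n=2$, $A_1=\cdb^{bic}$, $A_2=\cdb$ and $N=C([0,1]^2)$ equipped with $\Vert\cdot\Vert_\infty$. We must check the hypotheses: both sets are convex and compact, $\operatorname{ind}\cdb^{bic}=\infty$, and $\cdb^{bic}$ is a $Z$-set of $\cdb$. Lemma~\ref{lem:bic} supplies all of these except the dimension statement, which we obtain from Lemma~\ref{lem:dimension}.

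\textbf{Step 1 (ambient set).} The set $\cdb$ is a convex, compact subset of $(C([0,1]^2),\Vert\cdot\Vert_\infty)$, as recalled in Section~2.

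\textbf{Step 2 (the family $\cdb^{bic}$).} By Lemma~\ref{lem:bic}, $\cdb^{bic}$ is convex and compact, and it contains infinitely many linearly independent elements, namely the copulas $F_{\gamma_a}$ for $a\in(0,\tfrac12)$. Lemma~\ref{lem:dimension} then gives $\operatorname{ind}\cdb^{bic}=\infty$.

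Convexity deserves care, because $\delta\mapsto F_\delta$ is not obviously affine on the level of functions. It is, however: for fixed $(x,y)$, formula \eqref{eq:bic} is $\delta(t)\,s$ with $t,s$ depending only on $(x,y)$. Hence convex combinations of biconic copulas are biconic with the corresponding convex combination of diagonals, and $\mathcal{D}_{con}$ is convex, so the combination stays in $\cdb^{bic}$. This is the only place where anything must genuinely be verified rather than cited, and it is immediate from \eqref{eq:bic}.

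\textbf{Step 3 ($Z$-set property).} Lemma~\ref{lem:bic} gives this via the maps $C\mapsto\mathcal{OS}_b(C,W)$. These are continuous, satisfy $d_\infty(\mathcal{OS}_b(C,W),C)\le 3(1-b)$, and produce the diagonal $f_b(\delta_C)$, which is not convex. Since every $F_\delta$ has diagonal $\delta$, the copula $\mathcal{OS}_b(C,W)$ therefore lies outside $\cdb^{bic}$.

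\textbf{Step 4 (conclusion).} Theorem~\ref{thm:main.tool} yields the homeomorphism $h:[0,1]\times\mathcal{H}\to\cdb$ with $h(\{0\}\times\mathcal{H})=\cdb^{bic}$. It also gives the DC and FP properties of $\cdb^{bic}$ and shows that $\cdb^{bic}$ is a retract of $\cdb$.

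Alternatively, one can combine the homeomorphism $\mathcal{D}_{con}\cong\cdb^{bic}$ with Theorem~\ref{thm:main.diagonals} and then apply Remark~\ref{rem:alternative.main}. No step here is a real obstacle.
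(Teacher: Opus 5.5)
Your proposal is correct and follows the paper's route: the paper likewise applies Theorem~\ref{thm:main.tool} with $n=2$, $A_1=\cdb^{bic}$, $A_2=\cdb$, $N=C([0,1]^2)$. It relies on Lemma~\ref{lem:bic} for convexity, compactness, the linearly independent family $F_{\gamma_a}$ (whence $\operatorname{ind}\cdb^{bic}=\infty$ by Lemma~\ref{lem:dimension}) and the $Z$-set property via $C\mapsto\mathcal{OS}_b(C,W)$, exactly as you do.
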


\subsection{Diagonal and Bertino copulas}

For every diagonal $\delta \in \mathcal{D}$, the diagonal copula $E_\delta$ and 
the Bertino copula $B_\delta$ are defined, \FD{respectively}, by 
\begin{eqnarray}
E_\delta(x,y) &:=& \min\left\{x,\, y,\, \tfrac{\delta(x)+\delta(y)}{2}\right\}, \\
B_\delta(x,y) &:=& M(x,y) - \min_{t\in[\min\{x,y\},\,\max\{x,y\}]}\big(t-\delta(t)\big).
\end{eqnarray}
We will let $\cdb^{diag}$ and $\cdb^{Ber}$ denote the family of all diagonal and Bertino copulas, 
respectively.  
It is well known (see \cite{p14} and original references therein) that 
$E_\delta$ is the maximal element in $\cdbs^{\delta}$ \FD{(the family of exchangeable copulas with diagonal $\delta$)} and $B_\delta$ the minimal element 
in the larger class $\cdb^{\delta}$. Following \cite{p14}, $E_\delta$ assigns all mass
to the graphs of (at most) two non-decreasing functions, $B_\delta$ to the graphs of (at most) three
non-decreasing functions, one being the identity. As a direct consequence, 
both $E_\delta$ and $B_\delta$ are singular for every $\delta \in \mathcal{D}$.  
Obviously both mappings $\delta \mapsto E_\delta$ and $\delta \mapsto B_\delta$ are injective
and continuous, implying that the three metric spaces $(\cdb^{diag},d_\infty),(\cdb^{Ber},d_\infty)$
and $(\mathcal{D},\Vert \cdot \Vert_\infty)$ are homeomorphic. In particular,  
$(\cdb^{diag},d_\infty),(\cdb^{Ber},d_\infty)$ are compact.   

\begin{lem}\label{lem:bertino}
$\cdb^{diag}$ and $\cdb^{Ber}$ are homeomorphic to the Hilbert cube $\mathcal{H}$ and 
both are $Z$-sets of $\cdbs$.
\end{lem}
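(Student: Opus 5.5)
The plan is to argue in two separate steps: first identify the topological type of each family, then show that each is a $Z$-set of $\cdbs$.

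\emph{Homeomorphism with $\mathcal{H}$.} As observed just before the statement, the maps $\delta\mapsto E_\delta$ and $\delta\mapsto B_\delta$ are continuous bijections from the compact space $(\mathcal{D},\Vert\cdot\Vert_\infty)$ onto $\cdb^{diag}$ and $\cdb^{Ber}$, respectively. Since both targets are Hausdorff, these maps are homeomorphisms. By Theorem~\ref{thm:main.diagonals}, $\mathcal{D}\cong[0,1]\times\mathcal{H}\cong\mathcal{H}$; alternatively, Theorem~\ref{thm:keller} applies directly, because $\mathcal{D}$ is a compact convex set which, by Lemma~\ref{lem:dimension}, has $\operatorname{ind}=\infty$ (it contains $\mathcal{D}^\star$). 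Hence both families are homeomorphic to $\mathcal{H}$. Convexity of $\cdb^{diag}$ or $\cdb^{Ber}$ is not needed, and it would fail anyway because of the minima involved.

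\emph{$Z$-set property.} I follow the proof of Lemma~\ref{lem:cdelta}. For $b\in(0,1)$ put $f_b(C):=bC+(1-b)\Pi$.
\begin{itemize}
\item For $C\in\cdbs$, $f_b(C)$ is again exchangeable.
\item $f_b$ is continuous and satisfies $d_\infty(f_b(C),C)\le 1-b$.
\item $\mu_{f_b(C)}\ge(1-b)\lambda_2$, so $\mu_{f_b(C)}$ is not singular with respect to $\lambda_2$.
\end{itemize}
On the other hand, as recalled above (from \cite{p14}), every $E_\delta$ and every $B_\delta$ concentrates its mass on finitely many graphs of non-decreasing functions, which are $\lambda_2$-null sets, so these copulas are singular. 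Therefore $f_b$ maps $\cdbs$ into $\cdbs\setminus(\cdb^{diag}\cup\cdb^{Ber})$.

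It remains to check two hypotheses. First, $\cdb^{Ber}\subseteq\cdbs$: $B_\delta$ is symmetric in $x,y$ by its defining formula, and $E_\delta$ is symmetric as well. Second, both sets are closed in $\cdbs$, which holds because they are compact images of $\mathcal{D}$. Since $\cdbs$ is compact, constant $\varepsilon$ suffices in the definition of a $Z$-set, and letting $b\to1$ completes the argument.

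The main point requiring care is the singularity of $E_\delta$ and $B_\delta$, which I take from \cite{p14} as stated in the text. If a self-contained argument is preferred, I would instead note that neither copula has full support whenever $\delta\neq\delta_M$. Alternatively, the image can be handled more directly: $f_b(C)$ has a positive density part, whereas $E_\delta$ and $B_\delta$ have zero Lebesgue-absolutely-continuous component.
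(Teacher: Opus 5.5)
Your proof is correct and follows the paper's argument essentially verbatim. The first part identifies $\cdb^{diag}$ and $\cdb^{Ber}$ with $\mathcal{D}$ via $\delta\mapsto E_\delta$ and $\delta\mapsto B_\delta$ and then invokes Theorem~\ref{thm:main.diagonals}; the second uses the perturbation $f_b(C)=bC+(1-b)\Pi$, whose non-trivial absolutely continuous component keeps it outside the singular families $\cdb^{diag}\cup\cdb^{Ber}$, and your explicit checks that both families lie in $\cdbs$ and are closed are left implicit in the paper.
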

\begin{proof}
We already know that $(\cdb^{diag},d_\infty),(\cdb^{Ber},d_\infty)$
and $(\mathcal{D},\Vert \cdot \Vert_\infty)$ are homeomorphic. 
Considering that, according to Theorem \ref{thm:main.diagonals},  
$(\mathcal{D},\Vert \cdot \Vert_\infty)$ is homeomorphic to the Hilbert cube $(\mathcal{H},\rho)$, 
therefore proves the first assertion.\\
Again working with the transformation $f_b(C)=bC + (1-b)\Pi$ as considered in the 
proof of Lemma \ref{lem:cdelta}, for every $b \in (0,1)$ and every $C \in \cdbs$ we have 
that $f_b(C) \in \cdbs$ \FD{has a non-trivial absolutely continuous component}, so it can neither be a diagonal copula nor a Bertino copula \FD{(both are singular)}. 
In other words, $f_b$ maps $\cdbs$ into $\cdbs\setminus(\cdb^{diag} \cup \cdb^{Ber})$ and 
it follows as in the proofs of the previous lemmas that $\cdb^{diag}$ and $\cdb^{Ber}$ are
$Z$-sets of $\cdbs$.
\end{proof}

Again using Theorem \ref{thm:main.tool} (in the version sketched in 
Remark \ref{rem:alternative.main}) we obtain the following result:
\begin{thm}\label{thm:main.diag.bertino} There exists a homeomorphism $h : [0,1] \times \mathcal{H} \to \cdbs$ with 
$h(\{0\} \times \mathcal{H})= \cdb^{diag}$. Moreover, $\cdb^{diag}$ has the 
DC and the FP property and
is a retract of $\cdbs$. 
The same two assertions also hold for $\cdb^{diag}$ replaced by $\cdb^{Ber}$.
\end{thm}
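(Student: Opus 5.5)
The statement follows from Remark~\ref{rem:alternative.main}, the non-convex version of Theorem~\ref{thm:main.tool}, applied with $n=2$, $A_1=\cdb^{diag}$ (respectively $A_1=\cdb^{Ber}$) and $A_2=\cdbs$. The remark needs four facts: both sets are compact, both are homeomorphic to $\mathcal{H}$, $A_1\subset A_2$, and $A_1$ is a $Z$-set of $A_2$. Lemma~\ref{lem:bertino} provides most of these. It shows that $\cdb^{diag}$ and $\cdb^{Ber}$ are homeomorphic to $\mathcal{H}$, which also gives compactness, and that both are $Z$-sets of $\cdbs$. So the plan reduces to supplying the facts about the ambient space $\cdbs$ and the inclusion.

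\textbf{The ambient space.} I would show $\cdbs\cong\mathcal{H}$ via Theorem~\ref{thm:keller}. Exchangeability is preserved under convex combinations and uniform limits, so $\cdbs$ is a closed, convex subset of the compact space $(\cdb,d_\infty)$, hence compact. It remains to check $\operatorname{ind}\cdbs=\infty$, and by Lemma~\ref{lem:dimension} it suffices that $\cdbs$ contains infinitely many linearly independent elements. The LSL copulas $S_{\delta_a}$, $a\in(1,2)$, from the proof of Lemma~\ref{lem:lsl} do this: they are exchangeable by \eqref{eq:LSL}, and they were shown there to be linearly independent. Alternatively, Theorem~\ref{thm:lsl} already yields a homeomorphism onto $\cdbs$ from $[0,1]\times\mathcal{H}\cong\mathcal{H}$.

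\textbf{The inclusions.} Both $\cdb^{diag}$ and $\cdb^{Ber}$ must lie in $\cdbs$. For $E_\delta$ this is immediate, since its defining formula is symmetric in $x,y$. For $B_\delta$ it is also immediate: the formula depends only on $\min\{x,y\}$ and $\max\{x,y\}$, so it is symmetric.

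\textbf{Applying the remark.} With these facts, Remark~\ref{rem:alternative.main} gives the homeomorphism $h:[0,1]\times\mathcal{H}\to\cdbs$ with $h(\{0\}\times\mathcal{H})=\cdb^{diag}$, respectively $=\cdb^{Ber}$. The DC and FP properties follow because each set is homeomorphic to $\mathcal{H}$. The retraction is $h\circ(\text{projection onto }\{0\}\times\mathcal{H})\circ h^{-1}$.

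The only point needing care is that Lemma~\ref{lem:bertino} establishes the $Z$-set property inside $\cdbs$, not inside $\cdb$. This is consistent with the choice of $\cdbs$ as the ambient space, and the map $f_b(C)=bC+(1-b)\Pi$ used there indeed maps $\cdbs$ into itself. I expect no genuine obstacle beyond this bookkeeping.
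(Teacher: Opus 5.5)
Your proposal is correct and follows the paper's route: the paper obtains the theorem in one line, by applying Theorem~\ref{thm:main.tool} in the form of Remark~\ref{rem:alternative.main} with $n=2$, $A_1=\cdb^{diag}$ (respectively $A_1=\cdb^{Ber}$) and $A_2=\cdbs$, on the basis of Lemma~\ref{lem:bertino}. Your explicit checks that $\cdbs\cong\mathcal{H}$, that both families consist of exchangeable copulas, and your formula for the retraction supply details the paper leaves implicit.
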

We have already shown that the metric spaces 
$(\cdb^{diag},d_\infty), (\cdb^{Ber},d_\infty),(\cdb^{bic},d_\infty),
(\cdb^{c\delta},d_\infty)$ as well as $(\mathcal{D}, \Vert \cdot \Vert_\infty)$ and 
$(\mathcal{D}_{con}, \Vert \cdot \Vert_\infty)$ 
are homeomorphic to the Hilbert cube $\mathcal{H}$. As a direct consequence, the following 
corollary holds:
\begin{cor}
There exists a homeomorphism $h_1: \mathcal{D} \to \mathcal{D}_{con}$ such that 
the mapping $E_\delta \mapsto F_{h_1(\delta)}$ 
is a homeomorphism of $(\cdb^{diag},d_\infty)$ and $(\cdb^{bic},d_\infty)$ and 
$E_\delta \mapsto H_{h_1(\delta)}$ 
is a homeomorphism of $(\cdb^{diag},d_\infty)$ and $(\cdb^{c\delta},d_\infty)$.
\end{cor}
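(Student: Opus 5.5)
The plan is to reduce everything to the two diagonal spaces $\mathcal{D}$ and $\mathcal{D}_{con}$. By Theorem~\ref{thm:main.diagonals}, $\mathcal{D}_{con}$ is homeomorphic to $\mathcal{H}$. By the same theorem (or \cite{LiuLiuJiang2024}), $\mathcal{D}$ is homeomorphic to $\mathcal{H}$ as well. Hence there is a homeomorphism $h_1:\mathcal{D}\to\mathcal{D}_{con}$, and I take $h_1$ to be any such map; no special choice is needed.

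Next I set up three parametrizations:
\[
\iota_E:\mathcal{D}\to\cdb^{diag},\ \delta\mapsto E_\delta, \qquad \iota_F:\mathcal{D}_{con}\to\cdb^{bic},\ \delta\mapsto F_\delta, \qquad \iota_H:\mathcal{D}_{con}\to\cdb^{c\delta},\ \delta\mapsto H_\delta .
\]
Each map is surjective by the definition of the respective family. Each is injective because the diagonal of the image copula is $\delta$. For $\iota_E$ this holds since $E_\delta(x,x)=\min\{x,\delta(x)\}=\delta(x)$. For $\iota_H$ it holds since $H_\delta(x,x)=\min\{x,\delta(x)\}=\delta(x)$. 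For $\iota_F$, the diagonal of $F_\delta$ is $\delta(x)\cdot 1=\delta(x)$, because $\frac{x}{x+1-x}=x$.

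Continuity is elementary, and for $\iota_E$ and $\iota_H$ it is even $1$-Lipschitz. The map $\min$ is $1$-Lipschitz in each argument, so $d_\infty(E_\delta,E_{\delta'})\le\Vert\delta-\delta'\Vert_\infty$ and $d_\infty(H_\delta,H_{\delta'})\le\Vert\delta-\delta'\Vert_\infty$. For $F_\delta$ the factor $x+1-y$ lies in $[0,2]$, so $d_\infty(F_\delta,F_{\delta'})\le 2\Vert\delta-\delta'\Vert_\infty$. Each $\iota$ is therefore a continuous bijection from a compact space onto a Hausdorff space, and hence a homeomorphism, as already noted in the text preceding the corollary.

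With these maps the claimed homeomorphisms are the compositions
\[
\iota_F\circ h_1\circ\iota_E^{-1}:\cdb^{diag}\to\cdb^{bic}, \qquad \iota_H\circ h_1\circ\iota_E^{-1}:\cdb^{diag}\to\cdb^{c\delta}.
\]
Both are compositions of homeomorphisms. By construction they send $E_\delta$ to $F_{h_1(\delta)}$ and to $H_{h_1(\delta)}$, respectively, and they are well defined because $\iota_E^{-1}$ exists.

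The only step requiring any care is the existence of $h_1$, that is, that $\mathcal{D}$ itself (and not only $\mathcal{D}_{con}$) is homeomorphic to $\mathcal{H}$. This follows directly from Theorem~\ref{thm:main.diagonals}, since $\mathcal{D}\cong[0,1]\times\mathcal{H}\cong\mathcal{H}$. Everything else is bookkeeping.
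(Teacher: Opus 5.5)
Your proposal is correct and follows the paper's own (implicit) argument. The paper obtains the corollary as a direct consequence of two facts: $\mathcal{D}$ and $\mathcal{D}_{con}$ are homeomorphic to $\mathcal{H}$ (Theorem~\ref{thm:main.diagonals}), and $\delta\mapsto E_\delta$, $\delta\mapsto F_\delta$, $\delta\mapsto H_\delta$ are homeomorphisms onto their respective families; this is exactly your composition $\iota_F\circ h_1\circ\iota_E^{-1}$ (respectively $\iota_H\circ h_1\circ\iota_E^{-1}$), with your injectivity checks via the diagonal and your Lipschitz bounds spelling out what the paper calls obvious.
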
 

\subsection{Bivariate Extreme-Value Copulas}
We quickly recall the definition and properties of Extreme Value copulas. 
A copula $C \in \FD{\cdb}$ is called Extreme Value copula (EVC) if there exists some copula $B \in \FD{\cdb}$ such that
$$
C(x,y)  = \lim_{n \rightarrow \infty} B^n(x^\frac{1}{n},y^\frac{1}{n})
$$
for all $x,y \in [0,1]$. Throughout this \FD{subsection} the space of all bivariate EVCs will be denoted by 
$\cdb^{ev}$. It is well-known (see \cite{DiTru,TSFS} and the original references 
therein) that $(\cdb^{ev},d_\infty)$ is a compact metric space and that every EVC  
corresponds to a unique Pickands dependence function, i.e., a convex function $A: [0,1] \to [0,1]$
fulfilling $\max\{1-x,x\} \leq A(x) \leq 1$ for every $x \in [0,1]$. 
Letting $\mathcal{A}$ denote the family of all Pickands dependence functions, the 
one-to-one correspondence between $\cdb^{ev}$ and $\mathcal{A}$ is established via
    \begin{equation}\label{eq:map_eq_pick_copula}
    C(x,y) = (xy)^{A\left(\frac{\log(x)}{\log(xy)}\right)}, \quad x,y \in (0,1).
    \end{equation}
Letting $C_A$ denote the (unique) EVC induced by $A \in \mathcal{A}$, 
the afore-mentioned one-to-one correspondence is easily seen to be continuous, i.e., 
the mapping $\kappa: \mathcal{A} \to \cdb^{ev}$, defined by $\kappa(A)=C_A$, is a 
\FD{homeomorphism} between $(\mathcal{A},\Vert \cdot \Vert_\infty)$ and $(\cdb^{ev},d_\infty)$. 
Following \cite{Pick}, every Pickands dependence function $A$ can \FD{further} be 
identified with a so-called spectral measure $\nu$ on the unit simplex. 
Projecting $\nu$ onto $[0,1]$, according to \cite{DiTru}, $A$ corresponds to a unique
probability measure $\vartheta \in \mathcal{P}([0,1])$ fulfilling 
$$
\int_{[0,1]} x d\vartheta(x)= \tfrac{1}{2}.
$$
We will let $\mathcal{P}_\mathcal{A}$ denote the family of all these measures and refer to them 
as Pickands dependence measures.  
The one-to-one correspondence between $\mathcal{A}$ and $\mathcal{P}_\mathcal{A}$ is established via 
the identity 
\begin{equation}\label{eq:id_pick_meas}
A(t) = 1-t + 2\int_{[0,t]}\vartheta([0,z]) \mathrm{d}\lambda(z),\quad t \in [0,1].
\end{equation}
Considering the Wasserstein metric $m_W$ on $\mathcal{P}_\mathcal{A}$, 
it is straightforward to verify that the corresponding mapping 
$\Upsilon: (\mathcal{P}_\mathcal{A},m_W) \to (\mathcal{A},\Vert \cdot \Vert_\infty)$ is a 
homeomorphism that preserves convex combinations. 

A version of the following \FD{lemma} has already been proved in \cite{FST.l2} - 
we include the (slightly simplified) proof for the sake of completeness. 
\begin{lem}\label{lem:pick}
$\mathcal{A}$ is a compact, convex subset of $(C([0,1]),\Vert \cdot \Vert_\infty)$ 
that contains infinitely many linearly independent elements. 
Moreover, $\mathcal{A}$ and $\cdb^{ev}$ are homeomorphic to the Hilbert cube $\mathcal{H}$, and
$\cdb^{ev}$ is a $Z$-set of $\cdb$. 
\end{lem}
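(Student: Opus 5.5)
Compactness and convexity of $\mathcal{A}$ come directly from its definition. Convexity holds because the defining conditions (convexity of $A$ and $\max\{1-x,x\}\le A(x)\le 1$) are preserved under convex combinations. Compactness follows from Arzelà--Ascoli: every $A\in\mathcal{A}$ is convex with $A(0)=A(1)=1$ and $A\ge\max\{1-x,x\}$, so its one-sided derivatives lie in $[-1,1]$ and $A$ is $1$-Lipschitz. Moreover, uniform limits of convex functions satisfying the bounds satisfy them again, so $\mathcal{A}$ is closed in $C([0,1])$.

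For the infinitely many linearly independent elements I would use the Pickands measures. For $a\in(0,\frac12)$ let $\vartheta_a:=\frac12(\epsilon_a+\epsilon_{1-a})\in\mathcal{P}_\mathcal{A}$, whose mean is $\frac12$. Identity \eqref{eq:id_pick_meas} gives
\[
A_a(t)=1-t+2\int_0^t\vartheta_a([0,z])\,\mathrm{d}z,
\]
which is piecewise linear with kinks exactly at $a$ and $1-a$. Now suppose $\sum_i e_iA_{a_i}\equiv 0$ with $0<a_1<\dots<a_n<\frac12$. Comparing right derivatives on $(a_1,a_2)$ and on $(0,a_1)$ gives $e_1=0$, since only $A_{a_1}$ has a kink at $a_1$, and iterating yields $e_1=\dots=e_n=0$. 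Alternatively, one can evaluate the differences $A_{a_i}-(1-t)$, which vanish on $[0,a_i]$ and are positive just to the right of $a_i$, exactly as in the $\gamma_a$ argument of Lemma~\ref{lem:bic}. Then Lemma~\ref{lem:dimension} gives $\operatorname{ind}\mathcal{A}=\infty$, Theorem~\ref{thm:keller} gives $\mathcal{A}\cong\mathcal{H}$, and the homeomorphism $\kappa$ transfers this to $\cdb^{ev}$. Closedness of $\cdb^{ev}$ in $\cdb$ is immediate, since it is compact.

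For the $Z$-set property I would reuse the map $f_b(C)=bC+(1-b)\Pi$ from Lemma~\ref{lem:cdelta}, or failing that the ordinal-sum map $C\mapsto\mathcal{OS}_b(C,W)$, and show that its image avoids $\cdb^{ev}$. The ordinal sum is the safer choice. Every EVC $C_A$ has diagonal $\delta(t)=t^{2A(1/2)}$ with $2A(\frac12)\in[1,2]$, so its diagonal is either the identity or strictly increasing on $(0,1]$. The diagonal of $\mathcal{OS}_b(C,W)$, by contrast, is constant on $[b,\frac{1+b}{2}]$ (cf.\ the proof of Lemma~\ref{lem:diagonals.general}), so $\mathcal{OS}_b(C,W)\notin\cdb^{ev}$. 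The map is continuous and satisfies $\sup_C d_\infty(\mathcal{OS}_b(C,W),C)\le 3(1-b)\to0$ as $b\to1$, and since $\cdb$ is compact, constant $\varepsilon$ suffices.

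The only step needing real care is verifying the explicit diagonal of EVCs, $C_A(t,t)=t^{2A(1/2)}$, which follows at once from \eqref{eq:map_eq_pick_copula}. The rest of the argument is routine.
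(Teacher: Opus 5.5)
Your proof is correct. For compactness, convexity and the Hilbert-cube part it is essentially the paper's argument. The paper uses the same measures $\vartheta_a=\frac12(\epsilon_a+\epsilon_{1-a})$: it first proves linear independence of the measures via disjoint supports and then transfers it to $\Upsilon(\vartheta_{a_i})$ by differentiating \eqref{eq:id_pick_meas}. Your explicit formula $A_a(t)=\max\{1-t,1-a,t\}$ and the comparison of derivatives across the kink at $a_1$ is the same idea made concrete.

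The genuine difference is the $Z$-set step. The paper perturbs by $aC+(1-a)W$, which puts mass at least $1-a$ on the anti-diagonal $\{(x,1-x)\}$. It then shows that no EVC charges this line, using the Markov-kernel characterization of the atoms of $K_{C_A}(x,\cdot)$ from \cite{DiTru,TSFS} together with disintegration. You instead use $\mathcal{OS}_b(C,W)$, whose diagonal is constant equal to $b$ on $[b,\frac{1+b}{2}]$. You combine this with the observation, read off directly from \eqref{eq:map_eq_pick_copula}, that $\delta_{C_A}(t)=t^{2A(1/2)}$ with $2A(\frac12)\in[1,2]$, so every EVC has a strictly increasing diagonal.

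Your route is more elementary and self-contained, since it needs no kernel theory. It is also the same ordinal-sum device the paper itself uses for $\mathcal{D}_{con}$ and $\cdb^{bic}$. The paper's route instead exhibits a measure-theoretic obstruction (EVCs put zero mass on the anti-diagonal) and uses an affine perturbation.

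One small correction: the first map you mention, $bC+(1-b)\Pi$, is not merely less safe, it fails outright. $\Pi=C_A$ with $A\equiv 1$ is an EVC and is fixed by that map, so committing to the ordinal sum was necessary, not just prudent.
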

\begin{proof}
Showing that $\mathcal{A}$ contains infinitely many linearly independent elements 
can be done as follows:  
For every $a \in (0,\frac{1}{2})$ define the measure 
$\vartheta_a:=\frac{1}{2}(\epsilon_a + \epsilon_{1-a}) \in \mathcal{P}_\mathcal{A}$.
Obviously this family fulfills that for $a\neq b$ the supports of $\vartheta_a$ and $\vartheta_b$ 
are disjoint. 
As a direct consequence, the family $\{\vartheta_a: a \in (0,\frac{1}{2})\}$ is linearly 
independent in the 
vector space $\mathcal{S}([0,1])$ of all finite signed measures on $\mathcal{B}([0,1])$. 
In fact, if for $0<a_1<a_2 < \ldots < a_n < \frac{1}{2}$ there exist some 
constants $e_1,e_2,\ldots,e_n \in \mathbb{R}$ such that 
$$
\vartheta(G)=\sum_{i=1}^n e_i \vartheta_{a_i}(\FD{G})= 0
$$
for every $G \in \mathcal{B}([0,1])$ then, using disjointness of the supports
it follows immediately that $e_1=e_2=\ldots=e_n=0$. \\
\FD{This transfers to the Pickands functions $A_i:=\Upsilon(\vartheta_{a_i})$: if $\sum_i e_iA_i\equiv0$, differentiating \eqref{eq:id_pick_meas} gives $\sum_i e_i\,(2\vartheta_{a_i}([0,t])-1)=0$ for a.e.\ $t$; letting $t\uparrow1$ yields $\sum_ie_i=0$, hence $\sum_i e_i\vartheta_{a_i}=0$ and $e_1=\dots=e_n=0$.} 
Having this, applying Theorem \ref{thm:keller} yields that 
$(\mathcal{A},\Vert \cdot \Vert_\infty)$ is homeomorphic to $(\mathcal{H},\rho)$ 
and the same holds true for $(\cdb^{ev},d_\infty)$. \\
It remains to prove that $\cdb^{ev}$ is a $Z$-set of $\cdb$. 
For every $a \in (0,1)$ define the mapping: $\kappa_a: \cdb \to \cdb$ by 
$\kappa_a(C):=a\,C + (1-a)W$. We want to show that $\kappa_a(C) \in \cdb \setminus \cdb^{ev}$ 
for every $a \in (0,1)$ and every $C \in \cdb$. For $G:=\{(x,1-x): x \in [0,1]\}$ 
we obviously have $\mu_{\kappa_a(C)}(G)>0$ for every $a \in (0,1)$. No EVC, however, assigns
positive mass to $G$ - in fact, according to \cite{DiTru,TSFS} the ($1$-to-$1$) Markov Kernel 
$K_{C_A}$ of every EVC $C_A$ with corresponding Pickands dependence measure $\vartheta$ 
fulfills the following equivalence for every $x,y \in (0,1)$:
\begin{align}
K_{C_A}(x,\{y\}) >0 \quad \Longleftrightarrow \quad t:=\tfrac{\ln(x)}{\ln(xy)} \text{ fulfills }
\vartheta(\{t\})>0
\end{align} 
In particular, $K_{C_A}(x,\{1-x\}) >0 $ if and only if 
$\vartheta(\{\frac{\ln(x)}{\ln(x(1-x))}\})>0$. Considering that, firstly, 
$x \mapsto \frac{\ln(x)}{\ln(x(1-x))}$ is a strictly decreasing homeomorphism of $[0,1]$, and that, 
secondly, $\vartheta$ can at most have countably many point masses, it follows that 
there are at most countably many $x \in (0,1)$ with $K_{C_A}(x,\{1-x\}) >0 $. 
Using disintegration of $G$ therefore yields
\begin{align*}
\mu_{C_A}(G)&= \int_{[0,1]} K_{C_A}(x,G_x) d\lambda(x) = \int_{[0,1]} K_{C_A}(x,\{1-x\}) d\lambda(x)=0.
\end{align*}  
In other words, $\kappa_a$ maps $\cdb$ into $\cdb \setminus \cdb^{ev}$ for every $a \in (0,1)$.
For sufficiently large $a$ the mapping $\kappa_a$ is arbitrarily close to the identity mapping on 
$\cdb$, so $\cdb^{ev}$ is a $Z$-set of $\cdb$ and the proof is complete. 
\end{proof}

Using Theorem \ref{thm:main.tool} (in the version sketched in 
Remark \ref{rem:alternative.main}) with $n=2,A_1=\cdb^{ev},A_2=\cdb$, we obtain the following result:
\begin{thm}\label{thm:main.evc}
There exists a homeomorphism $h : [0,1] \times \mathcal{H} \to \cdb$ with 
$h(\{0\} \times \mathcal{H})= \cdb^{ev}$. Moreover, $\FD{\cdb^{ev}}$ has the 
DC and the FP property and
is a retract of $\cdb$. 
\end{thm}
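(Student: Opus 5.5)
The statement is Theorem~\ref{thm:main.evc}. I would obtain it by applying Theorem~\ref{thm:main.tool} in the non-convex form of Remark~\ref{rem:alternative.main}, with $n=2$, $A_1=\cdb^{ev}$ and $A_2=\cdb$. Convexity is not available here: $\cdb^{ev}$ is not closed under convex combinations, since a mixture of two EVCs is in general not max-stable. So the hypotheses of the remark have to be checked directly. There are three of them: both sets are compact subsets of the metric space $(\cdb,d_\infty)$, each is homeomorphic to $\mathcal{H}$, and $\cdb^{ev}$ is a $Z$-set of $\cdb$.

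The first step is compactness. This holds because $\cdb$ is compact and $\cdb^{ev}=\kappa(\mathcal{A})$ is the continuous image of the compact set $\mathcal{A}$. The second step is that both sets are homeomorphic to $\mathcal{H}$. For $\cdb$ this follows from Theorem~\ref{thm:keller}: $\cdb$ is a compact convex subset of $C([0,1]^2)$, and it contains infinitely many linearly independent elements, for instance all of $\cdb^{bic}$ by Lemma~\ref{lem:bic}. Lemma~\ref{lem:dimension} then gives $\operatorname{ind}\cdb=\infty$. For $\cdb^{ev}$, Lemma~\ref{lem:pick} already states that it is homeomorphic to $\mathcal{H}$. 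The third step, the $Z$-set property, is also contained in Lemma~\ref{lem:pick}. Its proof uses the maps $\kappa_a(C)=aC+(1-a)W$. These satisfy $d_\infty(\kappa_a(C),C)\le 1-a$ uniformly in $C$ and take values in $\cdb\setminus\cdb^{ev}$, because $\kappa_a(C)$ puts mass $1-a>0$ on the anti-diagonal, whereas every EVC puts mass zero there. By compactness of $\cdb$, constant $\varepsilon$ suffice.

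With these three checks, Remark~\ref{rem:alternative.main} yields a homeomorphism $h:[0,1]\times\mathcal{H}\to\cdb$ with $h(\{0\}\times\mathcal{H})=\cdb^{ev}$. Concretely, one fixes homeomorphisms $\phi:\cdb\to\{0\}\times\mathcal{H}\subset[0,1]\times\mathcal{H}$ and $\psi:[0,1]\times\mathcal{H}\to\cdb$. Since $[0,1]\times\mathcal{H}\cong\mathcal{H}$, Theorem~\ref{thm:unknot} applies to the $Z$-sets $\psi^{-1}(\cdb^{ev})$ and $\{0\}\times\mathcal{H}$; homeomorphisms preserve $Z$-sets, and $\{0\}\times\mathcal{H}$ is a $Z$-set of $[0,1]\times\mathcal{H}$. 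Unknotting extends the homeomorphism between these two $Z$-sets to a self-homeomorphism of $[0,1]\times\mathcal{H}$, and composing with $\psi$ gives $h$. The remaining conclusions follow as in Theorem~\ref{thm:main.tool}. The FP and DC properties hold because $\cdb^{ev}\cong\mathcal{H}$. The retraction is $h\circ p\circ h^{-1}$, where $p(t,\mathbf{x})=(0,\mathbf{x})$ is the retraction of $[0,1]\times\mathcal{H}$ onto $\{0\}\times\mathcal{H}$.

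The only substantive step is the $Z$-set property, and in particular the claim that EVCs do not charge the anti-diagonal $G=\{(x,1-x)\}$. I would take this from Lemma~\ref{lem:pick}, which shows it through the atom characterization of the Markov kernels: an atom of $K_{C_A}(x,\cdot)$ at $1-x$ requires $\vartheta$ to have an atom at the point $\tfrac{\ln x}{\ln(x(1-x))}$, so only countably many $x$ are affected, and integrating $K_{C_A}(x,\{1-x\})$ over $x$ gives $\mu_{C_A}(G)=0$. Everything else is bookkeeping with earlier results.
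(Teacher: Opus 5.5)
Your proposal is correct and follows the paper's route: Remark~\ref{rem:alternative.main} with $n=2$, $A_1=\cdb^{ev}$, $A_2=\cdb$, with Lemma~\ref{lem:pick} supplying both $\cdb^{ev}\cong\mathcal{H}$ and the $Z$-set property via $\kappa_a(C)=aC+(1-a)W$. Two small slips: in your unknotting sketch the homeomorphism $\phi$ must go from $\cdb^{ev}$ (not $\cdb$) onto $\{0\}\times\mathcal{H}$, and $\kappa_a(C)$ puts mass at least (not exactly) $1-a$ on the anti-diagonal.
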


\subsection{Schur-concave and weakly Schur-concave copulas}

We quickly recall the notions of Schur-concavity and weak Schur-concavity for bivariate 
copulas (see \cite{DurantePapini2007,DuSe,Liu2024a}). A copula $C \in \cdb$ is called 
\emph{Schur-concave} (SC, for short) if 
$$
C(x,y) \leq C\big(\lambda x + (1-\lambda) y,\, (1-\lambda) x + \lambda y\big)
$$
holds for all $x,y,\lambda \in [0,1]$. Following \cite{DurantePapini2007}, 
$C \in \cdb$ is called \emph{weakly Schur-concave} (WSC, for short) if 
$$
C(x,y) \leq C\left(\tfrac{x+y}{2},\tfrac{x+y}{2}\right)=\delta_C\left(\tfrac{x+y}{2}\right)
$$
holds for all $x,y \in [0,1]$. We will let $\cdb^{sc}$ and $\cdb^{wsc}$ denote the family of all 
SC and WSC copulas, respectively, and write $\cdbs^{wsc}$ for the family of all exchangeable 
elements of $\cdb^{wsc}$. Choosing $\lambda=0$ and $\lambda=\frac12$ in the definition shows that 
every SC copula is exchangeable and WSC, so we have
$$
\cdb^{sc} \subseteq \cdbs^{wsc} \subseteq \cdb^{wsc} \subseteq \cdb .
$$
Since both properties are expressed by families of inequalities which are linear in $C$ and 
stable under pointwise limits, $\cdb^{sc}$, $\cdbs^{wsc}$ and $\cdb^{wsc}$ are convex and closed, 
hence compact, subsets of $(\cdb,d_\infty)$. In \cite{Liu2024a} it was shown that 
$(\cdb^{sc},d_\infty)$ is homeomorphic to the Hilbert cube $\mathcal{H}$. 
Contrary to Schur-concave copulas, however, weakly Schur-concave copulas are not necessarily 
exchangeable (see \cite{DurantePapini2007}); in fact, the maximal attainable asymmetry of WSC 
copulas w.r.t.\ $d_\infty$ is $\frac{1}{4}$, which is not too far away from the global bound 
$\frac13$ for all bivariate copulas established in \cite{KM,N}. As a consequence, 
\cite[Remark 5.4]{Liu2024a} is not correct and the chain of inclusions underlying 
\cite[Problem 5.1]{Liu2024a} does not hold. In the sequel we show that the natural corrected 
version of this problem, involving the chain displayed above, has a positive answer.

We start with a simple observation, which provides a large supply of (weakly) Schur-concave 
copulas.
\begin{lem}\label{lem:schur.mixture}
For every $D \in \cdb$ the copula $\frac12(M+D)$ is WSC. Moreover, $\frac12(M+D)$ is SC if and 
only if $D$ is exchangeable.
\end{lem}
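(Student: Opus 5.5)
The plan is to verify both assertions directly from the definitions, using only two facts: every copula is non-decreasing and $1$-Lipschitz in each argument, and $M(x,y)=\min\{x,y\}$. Fix $D\in\cdb$ and put $C:=\frac12(M+D)$, which is a copula by convexity of $\cdb$.

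\emph{WSC.} Since $\delta_C(t)=\frac12(t+\delta_D(t))$, the inequality $C(x,y)\le\delta_C(\frac{x+y}{2})$ is equivalent to
$$\min\{x,y\}+D(x,y)\le t+\delta_D(t),\qquad t:=\tfrac{x+y}{2}.$$
First assume $x\le y$ and write $h:=\frac{y-x}{2}\ge0$, so that $x=t-h$ and $y=t+h$. Monotonicity in the first argument gives $D(x,y)\le D(t,y)$. The $1$-Lipschitz property in the second argument gives $D(t,y)\le D(t,t)+h=\delta_D(t)+h$. Hence $\min\{x,y\}+D(x,y)\le (t-h)+\delta_D(t)+h$, which is the claim. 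The case $x>y$ is identical with the roles of the arguments exchanged; this step does not need exchangeability of $D$.

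\emph{SC $\Rightarrow$ exchangeable.} Suppose $C$ is SC. Taking $\lambda=0$ in the definition gives $C(x,y)\le C(y,x)$ for all $x,y$. Swapping $x$ and $y$ gives the reverse inequality, so $C$ is exchangeable. Since $M$ is symmetric, $D=2C-M$ is exchangeable as well.

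\emph{Exchangeable $\Rightarrow$ SC.} Let $D$ be exchangeable, so $C$ is exchangeable too. By this symmetry we may assume $x\le y$. The point $\big(\lambda x+(1-\lambda)y,(1-\lambda)x+\lambda y\big)$ has coordinates summing to $x+y$ and both lying in $[x,y]$. By exchangeability of $C$ we may reorder it as $(u,v)$ with $x\le u\le v\le y$ and $u+v=x+y$. It then suffices to show $x+D(x,y)\le u+D(u,v)$. Monotonicity gives $D(x,y)\le D(u,y)$. The Lipschitz property in the second argument gives $D(u,y)\le D(u,v)+(y-v)$. Finally $y-v=u-x$, and adding $x$ to both sides yields the claim.

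The only delicate point is the bookkeeping in this last step. Exchangeability is used precisely to move the point on the anti-diagonal segment into the normalised position $(u,v)$ with $u\le v$; once that is done, the argument is the same estimate as in the WSC case.
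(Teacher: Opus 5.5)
Your proof is correct and takes essentially the same approach as the paper. Both use monotonicity in one argument and the $1$-Lipschitz property in the other to show that, when moving along an anti-diagonal toward the diagonal, the loss in $D$ is at most the gain $h$ in $M$. The paper packages this as monotonicity of $h\mapsto \frac12(M+D)(x+h,y-h)$ on $[0,\frac{y-x}{2}]$, which yields WSC at the endpoint and SC for exchangeable $D$ at once, whereas you run the same estimate twice and, unlike the paper, treat the case $x>y$ explicitly.
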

\begin{proof}
Let $x\le y$, $h\in[0,\frac{y-x}{2}]$, and set $x_h:=x+h$, $y_h:=y-h$, so that $x_h \leq y_h$. 
Since $D$ is non-decreasing and $1$-Lipschitz in each argument we have 
$D(x_h,y_h)-D(x,y) \geq D(x,y_h)-D(x,y) \geq -h$, whereas $M(x_h,y_h)-M(x,y)=h$. Hence 
$h \mapsto \frac12(M+D)(x_h,y_h)$ is non-decreasing on $[0,\frac{y-x}{2}]$; for $h=\frac{y-x}{2}$ 
this yields weak Schur-concavity. If $D$ is exchangeable, so is $\frac12(M+D)$, and the 
monotonicity just shown is exactly Schur-concavity. Conversely, every SC copula is exchangeable, 
and $\frac12(M+D)$ is exchangeable if and only if $D$ is. 
\end{proof}

\begin{lem}\label{lem:schur.os}
Let $\theta \in (0,1)$ and $C_1,C_2 \in \cdb^{wsc}$. Then $\mathcal{OS}_\theta(C_1,C_2) \in \cdb^{wsc}$, 
and $\mathcal{OS}_\theta(C_1,C_2)$ is exchangeable whenever $C_1$ and $C_2$ are. 
Moreover, if $C_2$ is not SC (not exchangeable, not WSC, respectively), then 
$\mathcal{OS}_\theta(C_1,C_2)$ is not SC (not exchangeable, not WSC, respectively).
\end{lem}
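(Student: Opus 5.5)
We write $O:=\mathcal{OS}_\theta(C_1,C_2)$ and work directly with the explicit formula \eqref{eq:os} for $d=2$. For $\min\{x,y\}\le\theta$ it reads $O(x,y)=\theta C_1(\min\{x,\theta\}/\theta,\min\{y,\theta\}/\theta)$; for $\min\{x,y\}>\theta$ it reads $O(x,y)=\theta+(1-\theta)C_2(\tfrac{x-\theta}{1-\theta},\tfrac{y-\theta}{1-\theta})$. Exchangeability is immediate, since the formula is symmetric in $(x,y)$ whenever $C_1$ and $C_2$ are. The ``moreover'' part also reduces to the upper block. The affine map $\phi(s)=\theta+(1-\theta)s$ sends $[0,1]^2$ onto $[\theta,1]^2$, and $O(\phi(s),\phi(t))=\phi(C_2(s,t))$. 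Moreover $\phi$ commutes with averaging, $\tfrac{\phi(s)+\phi(t)}2=\phi(\tfrac{s+t}2)$, and also with convex combinations $\lambda\phi(s)+(1-\lambda)\phi(t)$. Hence any violation of SC, exchangeability or WSC by $C_2$ at points of $[0,1]^2$ transports verbatim to a violation by $O$ at the image points in $[\theta,1]^2$. Since $\phi$ is increasing, the strict inequalities are preserved.

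The main work is showing that $O$ is WSC. Fix $(x,y)$ with $x\le y$ (the case $x>y$ is analogous), and let $m=\tfrac{x+y}2$. Note that $\delta_O(m)=O(m,m)$. We split into three cases.

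\emph{Case 1: $y\le\theta$.} Then $m\le\theta$, and the claim is the WSC inequality for $C_1$ at $(x/\theta,y/\theta)$, scaled by $\theta$.

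\emph{Case 2: $x>\theta$.} Then $m>\theta$, and the claim is the WSC inequality for $C_2$, transported by $\phi$ as above.

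\emph{Case 3: $x\le\theta<y$.} This mixed case is the main obstacle. Here $O(x,y)=\theta C_1(x/\theta,1)=x$. We need $\delta_O(m)\ge x$.
\begin{itemize}
\item If $m>\theta$, then $\delta_O(m)\ge\theta\ge x$, because $O\ge\theta$ on $(\theta,1]^2$.
\item If $m\le\theta$, then $\delta_O(m)=\theta\,\delta_{C_1}(m/\theta)$. By WSC of $C_1$ at the point $(x/\theta,1)$, we get $\delta_{C_1}\bigl(\tfrac{x/\theta+1}{2}\bigr)\ge C_1(x/\theta,1)=x/\theta$. Now $m=\tfrac{x+y}2\ge\tfrac{x+\theta}2$, so $m/\theta\ge\tfrac{x/\theta+1}{2}$. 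Since diagonals are non-decreasing, $\theta\,\delta_{C_1}(m/\theta)\ge x$.
\end{itemize}

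This closes the WSC case. It also explains why WSC of $C_1$ itself, rather than mere copula properties, is needed: it controls the behaviour at the boundary point $(x/\theta,1)$.
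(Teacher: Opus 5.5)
Your proof is correct and follows the paper's argument: the same three-case split for weak Schur-concavity, and the same affine-transport argument via $u\mapsto\theta+(1-\theta)u$ for the ``moreover'' part, with one cosmetic difference in the subcase $m\le\theta$ of Case 3, where the paper uses the Fr\'echet bound $\delta_{C_1}(s)\ge 2s-1$ directly to get $\theta\,\delta_{C_1}(m/\theta)\ge x+y-\theta\ge x$. Your closing remark is mistaken, though: WSC at a boundary point $(u,1)$ holds for every copula, since $C(u,1)=u\le\delta_C(\tfrac{u+1}{2})$ by that same bound, so WSC of $C_1$ is genuinely needed only in Case 1.
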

\begin{proof}
Set $O:=\mathcal{OS}_\theta(C_1,C_2)$, let $x \leq y$ and $t:=\frac{x+y}{2}$. If $y \leq \theta$ 
or $x > \theta$, then $O(x,y) \le O(t,t)$ follows directly from the WSC property of $C_1$ or 
$C_2$, respectively, since the affine rescalings involved in \eqref{eq:os} preserve midpoints. 
If $x\le\theta<y$, then $O(x,y)=x$; for $t>\theta$ we get $O(t,t)\ge\theta\ge x$, and for 
$t \le \theta$ we get 
$O(t,t)=\theta\,\delta_{C_1}(\frac{t}{\theta})\ge \theta\,(\frac{2t}{\theta}-1)=x+y-\theta \geq x$. 
Hence $O \in \cdb^{wsc}$. Exchangeability of $O$ for exchangeable $C_1,C_2$ is obvious. 
Finally, on $[\theta,1]^2$ the copula $O$ is an affinely rescaled copy of $C_2$, and the 
rescaling $u\mapsto\theta+(1-\theta)u$ maps lines $\{u+v=\mathrm{const}\}$, midpoints and the 
reflection $(u,v)\mapsto(v,u)$ onto the corresponding objects in $[\theta,1]^2$. Therefore every 
violation of SC, exchangeability or WSC by $C_2$ yields a violation of the same property by $O$.
\end{proof}

The following example shows that $\cdb^{sc}$ is a proper subset of $\cdbs^{wsc}$.
\begin{exa}\label{ex:wsc.not.sc}
For $a \in [\frac{2}{3},1)$ set $c:=1-a \in (0,\frac13]$ and consider the $\lambda$-preserving 
transformation $T_a:[0,1]\to[0,1]$, given by
$$
T_a(x)=(x+a)\mathbf{1}_{[0,c)}(x) + x\,\mathbf{1}_{[c,a]}(x) + (x-a)\mathbf{1}_{(a,1]}(x).
$$
Since $T_a \circ T_a=\id$ holds $\lambda$-almost everywhere, the completely dependent copula 
$C_a$ assigning all mass to the graph $\Gamma(T_a)$ is exchangeable. 
Splitting the mass of $C_a$ according to the three segments of $\Gamma(T_a)$, for $x\le y$ we get
$$
C_a(x,y)=\big(\min\{x,y-a\}\big)^+ + \big(\min\{x,a\}-c\big)^+ + (x-a)^+,
\qquad 
\delta_{C_a}(t)=\big(\min\{t,a\}-c\big)^+ + 2\,(t-a)^+ .
$$
In particular, $C_a(\frac c2,1-\frac c2)=\frac c2>0=C_a(c,a)$; since $(c,a)$ lies on the 
segment joining $(\frac c2,1-\frac c2)$ and $(1-\frac c2,\frac c2)$, $C_a$ is not SC. \\
Nevertheless, $C_a$ is WSC. In fact, let $x \le y$ and $t:=\frac{x+y}{2}$. 
If $x \ge a$, then $C_a(x,y)=x+y-a-c=\delta_{C_a}(t)$. 
If $c\le x<a$ and $t\ge a$, then $C_a(x,y)\le (y-a)+(x-c)=\delta_{C_a}(t)$; if $c \leq x<a$ and 
$t<a$, then $\delta_{C_a}(t)=t-c$ and $C_a(x,y)\le (y-a)^+ + x-c \le t-c$, since $y-a<y-t$ 
whenever $y>a$. 
Finally, let $x<c$, so that $C_a(x,y)=(\min\{x,y-a\})^+$, which vanishes for $y \leq a$. 
If $y>a$ and $t \ge a$, then $\delta_{C_a}(t) \ge a-c \ge c > x$ because $a\ge\frac23$. 
If $y>a$ and $t<a$, then $\delta_{C_a}(t)\ge t-c$, and we have $t-c\ge x$ if $y\ge x+2c$ as well 
as $t-c\ge y-a$ if $y\le x+4a-2$; since $4a-2 \ge 2c$ (again because $a\ge\frac23$), one of these 
two cases always applies. Altogether $C_a(x,y)\le\delta_{C_a}(\frac{x+y}{2})$ for $x \leq y$, 
and by exchangeability for all $x,y\in[0,1]$. \\
In other words, for every $a \in [\frac{2}{3},1)$ the copula $C_a$ is an element of 
$\cdbs^{wsc}\setminus\cdb^{sc}$. 
Notice that the bound $a\geq\frac{2}{3}$ is sharp: for $a \in (\frac12,\frac23)$ and sufficiently 
small $x>0$ we have $C_a(x,x+a)=x>\big(x+\frac{3a}{2}-1\big)^+=\delta_{C_a}(x+\frac a2)$.
\end{exa}

\begin{lem}\label{lem:schur}
$\cdb^{sc}$, $\cdbs^{wsc}$ and $\cdb^{wsc}$ are compact, convex subsets of $(\cdb,d_\infty)$ and 
$\operatorname{ind}\cdb^{sc}=\infty$. Moreover, $\cdb^{sc}$ is a $Z$-set of $\cdbs^{wsc}$, 
$\cdbs^{wsc}$ is a $Z$-set of $\cdb^{wsc}$, and $\cdb^{wsc}$ is a $Z$-set of $\cdb$.
\end{lem}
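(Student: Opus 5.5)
Compactness and convexity were already noted before the statement: each family is cut out of $\cdb$ by inequalities that are linear in $C$ and stable under pointwise limits. So the work lies in (a) finding infinitely many linearly independent SC copulas and (b) building three families of maps close to the identity that push each set out of the next smaller one.

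\emph{Step (a).} By Lemma~\ref{lem:schur.mixture}, $\frac12(M+D)\in\cdb^{sc}$ for every exchangeable $D$. I take $D:=F_{\gamma_a}$, $a\in(0,\frac12)$, the exchangeable biconic copulas from the proof of Lemma~\ref{lem:bic}. The diagonal of $\frac12(M+F_{\gamma_a})$ is $\frac12(\delta_M+\gamma_a)$. Suppose $\sum_i e_i\frac12(M+F_{\gamma_{a_i}})\equiv 0$. Restricting to the diagonal gives $\sum_i e_i(\mathrm{id}+\gamma_{a_i})=0$. On $[0,a_1]$ every $\gamma_{a_i}$ vanishes, so $\sum_i e_i=0$. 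Hence $\sum_i e_i\gamma_{a_i}=0$, and the argument in Lemma~\ref{lem:bic} forces all $e_i=0$. Lemma~\ref{lem:dimension} then gives $\operatorname{ind}\cdb^{sc}=\infty$.

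\emph{Step (b).} All three $Z$-set claims use the ordinal sum maps $f_b(C):=\mathcal{OS}_b(C,R)$ with $b\to1$. These are continuous and satisfy $d_\infty(f_b(C),C)\le 3(1-b)$ by the Lipschitz argument used earlier. The choice of $R$ depends on the pair:
\begin{itemize}
\item[(i)] For $\cdb^{sc}\subset\cdbs^{wsc}$, take $R:=C_a$ from Example~\ref{ex:wsc.not.sc}, which lies in $\cdbs^{wsc}\setminus\cdb^{sc}$. Lemma~\ref{lem:schur.os} shows that $f_b$ maps $\cdbs^{wsc}$ into $\cdbs^{wsc}\setminus\cdb^{sc}$.
\item[(ii)] For $\cdbs^{wsc}\subset\cdb^{wsc}$, take $R:=\frac12(M+D)$ with $D$ a non-exchangeable copula, e.g.\ a shuffle of $M$ that is not symmetric. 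By Lemma~\ref{lem:schur.mixture}, $R$ is WSC but not exchangeable. By Lemma~\ref{lem:schur.os}, $f_b$ maps $\cdb^{wsc}$ into $\cdb^{wsc}\setminus\cdbs^{wsc}$.
\item[(iii)] For $\cdb^{wsc}\subset\cdb$, take $R:=W$. This is not WSC, since $W(1,0)=0$ but $\delta_W(\frac12)=0$ only gives equality. So I will instead use a point like $(\frac34,\frac34)$ versus $(1,\frac12)$: $W(1,\frac12)=\frac12>\delta_W(\frac34)=\frac12$ fails as well. I will therefore choose $R$ to be a copula with $R(x,y)>\delta_R(\frac{x+y}2)$ somewhere, e.g.\ the shuffle $C(x,y)$ supported on $y=x+\frac12 \bmod 1$. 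It has $R(\frac12,1)=\frac12$ and $\delta_R(\frac34)=\frac14$, so it is not WSC.
\end{itemize}
The last part of Lemma~\ref{lem:schur.os} (a non-WSC $C_2$ gives a non-WSC ordinal sum) needs no WSC assumption on $C_1$ for that implication. Indeed, its proof only transports a violation inside $[\theta,1]^2$. So $f_b$ maps $\cdb$ into $\cdb\setminus\cdb^{wsc}$.

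\emph{Main obstacle.} I expect the hardest part to be (iii): checking that the "not WSC" transfer in Lemma~\ref{lem:schur.os} really holds for arbitrary $C_1\in\cdb$, not only for WSC $C_1$. The violating pair $(x,y)$ and its midpoint both lie in $[b,1]^2$, where $\mathcal{OS}_b(C_1,R)$ depends only on $R$, so this should go through. A secondary point is confirming the non-exchangeability of the chosen $D$ in (ii); a shuffle supported on the graph of a non-involutive $\lambda$-preserving map suffices.
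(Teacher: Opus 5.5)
\textbf{Z-set part.} Your plan for the three $Z$-set claims is the paper's own. You use $f_\theta(C)=\mathcal{OS}_\theta(C,C^\ast)$ with the same three choices of $C^\ast$: the copula $C_a$ of Example~\ref{ex:wsc.not.sc}, then $\frac12(M+D)$ with $D$ non-exchangeable, and finally the shuffle supported on $y=x+\frac12 \bmod 1$.

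There is, however, a computational error in (iii). For that shuffle,
$\delta_R(\frac34)=\lambda([0,\frac14])+\lambda([\frac12,\frac34])=\frac12$, not $\frac14$. So $R(\frac12,1)=\frac12=\delta_R(\frac34)$ exhibits no violation. In fact no point with a coordinate equal to $0$ or $1$ can ever witness failure of weak Schur-concavity, since for every copula $C(x,1)=x=2\cdot\frac{1+x}{2}-1\le\delta_C(\frac{1+x}{2})$.

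The fix is to use an interior point on or below the anti-diagonal. For $x\in(0,\frac12)$ and $y\in(\frac12,1-x]$ one has $R(x,y)=\min\{x,y-\frac12\}>0$, while $\delta_R(t)=0$ for all $t\le\frac12$. For example, $R(\frac14,\frac34)=\frac14>0=\delta_R(\frac12)$; the paper uses the point $(\frac15,\frac34)$.

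You should also delete the exploratory lines about $W$. Since $W(x,y)=\max\{x+y-1,0\}$ depends only on $x+y$, $W$ is even Schur-concave and can never serve as $C^\ast$ in (iii).

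Your observation about Lemma~\ref{lem:schur.os} is correct. That lemma is formally stated for $C_1,C_2\in\cdb^{wsc}$, so its ``not WSC'' clause is vacuous under its own hypotheses. In (iii) it must be applied with arbitrary $C_1\in\cdb$, and this is legitimate because the transfer argument is purely local to $[\theta,1]^2$. Here you are more careful than the paper.

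\textbf{The claim $\operatorname{ind}\cdb^{sc}=\infty$.} Here you take a genuinely different but valid route.

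The paper argues structurally. It observes that $D\mapsto\frac12(M+D)$ is an affine injection of the compact set $\cdbs$ into $\cdb^{sc}$, hence a homeomorphism onto its image. It then combines $\operatorname{ind}\cdbs=\infty$ (Lemma~\ref{lem:mini.infty} with $d=2$) with invariance and monotonicity of $\operatorname{ind}$. This needs no linear-independence computation and yields the stronger fact that a topological copy of $\cdbs$ sits inside $\cdb^{sc}$.

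You instead exhibit the explicit family $\frac12(M+F_{\gamma_a})$, $a\in(0,\frac12)$, and test linear independence on the diagonals $\frac12(\mathrm{id}+\gamma_a)$. The preliminary step of extracting $\sum_i e_i=0$ on $[0,a_1]$ is genuinely needed, because adding the common summand $M$ could a priori destroy linear independence; you handle it correctly. Your version is more elementary and self-contained: it only uses that $\gamma_a$ is convex, so $F_{\gamma_a}$ is an exchangeable biconic copula, together with Lemma~\ref{lem:dimension}. The paper's version is shorter given the lemmas already established.
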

\begin{proof}
Compactness and convexity have already been discussed above. According to 
Lemma \ref{lem:schur.mixture}, the mapping $D\mapsto\frac12(M+D)$ is an affine (hence continuous) 
injection of $\cdbs$ into $\cdb^{sc}$; by compactness it is a homeomorphism onto its image. 
Since $\operatorname{ind}\cdbs=\infty$ (see Lemma \ref{lem:mini.infty} with $d=2$, noting 
$\hat{\mathcal{C}}_2^{\Pi_1}=\cdbs$), Lemma \ref{lem:ind.mono} yields 
$\operatorname{ind}\cdb^{sc}=\infty$. \\
In order to prove the $Z$-set assertions we fix some $C^\ast \in \cdb$ and, for every 
$\theta \in (0,1)$, consider the continuous mapping $f_\theta: C \mapsto \mathcal{OS}_\theta(C,C^\ast)$. 
Using the Lipschitz continuity of copulas it is straightforward to verify that 
$\sup_{C \in \cdb} d_\infty(f_\theta(C),C) \leq 3(1-\theta)$, so $f_\theta$ is arbitrarily close 
to the identity for $\theta$ sufficiently close to $1$. \\
(i) Choosing $C^\ast := C_a$ from Example \ref{ex:wsc.not.sc}, Lemma \ref{lem:schur.os} shows 
that $f_\theta$ maps $\cdbs^{wsc}$ into $\cdbs^{wsc}\setminus\cdb^{sc}$, so $\cdb^{sc}$ is a 
$Z$-set of $\cdbs^{wsc}$. \\
(ii) Choosing $C^\ast := \frac12(M+D)$ with an arbitrary non-exchangeable copula $D$, 
Lemmas \ref{lem:schur.mixture} and \ref{lem:schur.os} show that $f_\theta$ maps $\cdb^{wsc}$ into 
$\cdb^{wsc}\setminus\cdbs^{wsc}$, so $\cdbs^{wsc}$ is a $Z$-set of $\cdb^{wsc}$. \\
(iii) Let $C^\ast$ be the completely dependent copula assigning all mass to the graph of 
$x\mapsto (x+\frac12)\,(\mathrm{mod}\, 1)$. Then 
$C^\ast(\frac15,\frac34)=\frac15>0=\delta_{C^\ast}(\frac{19}{40})$, so $C^\ast$ is not WSC, and by 
Lemma \ref{lem:schur.os} $f_\theta$ maps $\cdb$ into $\cdb\setminus\cdb^{wsc}$. Hence $\cdb^{wsc}$ 
is a $Z$-set of $\cdb$.
\end{proof}

Applying Theorem \ref{thm:main.tool} for the case $n=4$, $A_1=\cdb^{sc}$, $A_2=\cdbs^{wsc}$, 
$A_3=\cdb^{wsc}$, $A_4=\cdb$ and $N=C([0,1]^2)$ we obtain the following result:
\begin{thm}\label{thm:main.schur}
There exists a homeomorphism $h : [0,1]^3 \times \mathcal{H} \to \cdb$ with 
$$
h(\{0\}\times [0,1]^2 \times \mathcal{H})= \cdb^{wsc},\quad 
h(\{0\}^2\times [0,1] \times \mathcal{H})= \cdbs^{wsc},\quad
h(\{0\}^3 \times \mathcal{H}) = \cdb^{sc}.
$$
Moreover, each of the families $\cdb^{sc}$, $\cdbs^{wsc}$ and $\cdb^{wsc}$ has the DC and the FP 
property, $\cdb^{sc}$ is a retract of $\cdbs^{wsc}$, $\cdbs^{wsc}$ is a retract of $\cdb^{wsc}$, 
and $\cdb^{wsc}$ is a retract of $\cdb$.
\end{thm}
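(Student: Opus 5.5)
The plan is to obtain the statement as a direct instance of Theorem~\ref{thm:main.tool}. We take $n=4$, the normed space $N=C([0,1]^2)$ with $\Vert\cdot\Vert_\infty$, and the chain
$$A_1=\cdb^{sc}\subseteq A_2=\cdbs^{wsc}\subseteq A_3=\cdb^{wsc}\subseteq A_4=\cdb.$$
All hypotheses of Theorem~\ref{thm:main.tool} are supplied by Lemma~\ref{lem:schur}:
\begin{enumerate}
\item each $A_i$ is a compact, convex subset of $N$ (for $\cdb$ this is the well-known compactness and convexity of $(\cdb,d_\infty)$);
\item $\operatorname{ind}A_1=\infty$;
\item $A_{i-1}$ is a $Z$-set of $A_i$ for $i=2,3,4$.
\end{enumerate}
The only additional point to check is that the inclusions are strict, as the symbol $\subset$ in Theorem~\ref{thm:main.tool} suggests. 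This follows from the $Z$-set property itself: a $Z$-set is nowhere dense and $A_i\neq\emptyset$, so $A_{i-1}\neq A_i$. Concretely, Example~\ref{ex:wsc.not.sc} separates $\cdb^{sc}$ from $\cdbs^{wsc}$, the copula $\frac12(M+D)$ with $D$ non-exchangeable separates $\cdbs^{wsc}$ from $\cdb^{wsc}$, and the shift copula used in Lemma~\ref{lem:schur}(iii) separates $\cdb^{wsc}$ from $\cdb$.

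With the hypotheses verified, Theorem~\ref{thm:main.tool} yields a homeomorphism $h:[0,1]^3\times\mathcal{H}\to\cdb$ such that
\begin{itemize}
\item $\{0\}\times[0,1]^2\times\mathcal{H}$ is mapped onto $A_3=\cdb^{wsc}$,
\item $\{0\}^2\times[0,1]\times\mathcal{H}$ is mapped onto $A_2=\cdbs^{wsc}$,
\item $\{0\}^3\times\mathcal{H}$ is mapped onto $A_1=\cdb^{sc}$.
\end{itemize}
These are exactly the three face identities in the statement. The same theorem gives the DC and FP properties of every $A_i$, and shows that $A_j$ is a retract of $A_i$ whenever $j<i$. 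This in particular gives the three consecutive retractions listed in the statement.

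No new obstacle arises, since all the substantive work sits in Lemma~\ref{lem:schur}. The most delicate input there is $\operatorname{ind}\cdb^{sc}=\infty$. It rests on the affine embedding $D\mapsto\frac12(M+D)$ of $\cdbs$ into $\cdb^{sc}$ from Lemma~\ref{lem:schur.mixture}, combined with $\operatorname{ind}\cdbs=\infty$, which is taken from Lemma~\ref{lem:mini.infty} (stated later in the paper) with $d=2$. If one prefers to avoid that forward reference, $\operatorname{ind}\cdbs=\infty$ can be obtained directly from Lemma~\ref{lem:dimension}. Applying the embedding to an infinite linearly independent family in $\cdbs$ — for instance the LSL copulas $S_{\delta_a}$, $a\in(1,2)$, from Lemma~\ref{lem:lsl} — produces infinitely many linearly independent elements of $\cdb^{sc}$. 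Indeed, if $\sum_i e_i\frac12(M+S_{\delta_{a_i}})=0$, evaluating on the diagonal gives
$$\tfrac12\Big(\sum_i e_i\Big)x+\tfrac12\sum_i e_i x^{a_i}=0 \quad\text{for all } x\in[0,1],$$
and since $\{x,x^{a_1},\ldots,x^{a_n}\}$ is linearly independent, all $e_i=0$.
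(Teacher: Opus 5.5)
Your proposal is correct and matches the paper's proof: apply Theorem~\ref{thm:main.tool} with $n=4$, $N=C([0,1]^2)$ and the chain $\cdb^{sc}\subseteq\cdbs^{wsc}\subseteq\cdb^{wsc}\subseteq\cdb$, with all hypotheses supplied by Lemma~\ref{lem:schur}. Your two additions are valid but go beyond what the paper needs: the strictness of the inclusions, and getting $\operatorname{ind}\cdb^{sc}=\infty$ directly from the linearly independent family $\frac12(M+S_{\delta_a})$, $a\in(1,2)$. The latter avoids the forward reference to Lemma~\ref{lem:mini.infty} with $d=2$, which sits slightly awkwardly with the $d\ge3$ convention of Section~4.
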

\begin{rem}
As a by-product, Lemma \ref{lem:schur} together with Theorem \ref{thm:keller} provides a short 
alternative proof of the main result of \cite{Liu2024a}, namely that $(\cdb^{sc},d_\infty)$ is 
homeomorphic to the Hilbert cube $\mathcal{H}$.
\end{rem}

\section{Properties of $\cd$ and $\qd$ and their exchangeable subclasses}

It has already been shown in \cite{LiuYang2023} that $\cd$ is homeomorphic to the Hilbert
cube $\mathcal{H}$. We here provide an alternative simple proof building upon 
Theorem \ref{thm:main.tool} and Lemma \ref{lem:dimension}. 
\FD{Throughout this section (as far as the families $\cdu,\cdus$ are concerned) we assume $d\ge3$: for $d=2$ we have $\mathcal{C}_2^{\Pi_1}=\mathcal{C}_2$.} 
We start, however, with a seemingly very restrictive class, the family $\cdu$ of all 
$d$-dimensional copulas whose $(d-1)$-dimensional marginals are all equal to the $(d-1)$-dimensional 
product copula $\Pi_{d-1}$. In other words: we study the family of distributions of 
$d$-dimensional random vectors $\mathbf{X}=(X_1,\ldots,X_d)$ of $\mathcal{U}(0,1)$-random variables
such that each $(d-1)$-\FD{tuple} is independent. $\cdus$ will denote the subset of all $C \in \cdu$ which
are exchangeable (symmetric).
In \cite{KBST} it was shown in terms of the metric $d_\infty$ 
(and another more restrictive metric $D_1$) that $\cdu$ is convex and compact and larger than one might expect - 
the following result points in this very direction too. 
\begin{lem}\label{lem:mini.infty}
The set $\cdus$ is closed, convex, and contains infinitely many linearly independent elements. 
In particular, $\operatorname{ind}\cdus=\infty$.  
\end{lem}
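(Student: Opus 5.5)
The plan is to handle closedness and convexity directly from the definitions, and then to exhibit an explicit infinite family of absolutely continuous, exchangeable copulas whose $(d-1)$-dimensional marginals are all $\Pi_{d-1}$. The statement $\operatorname{ind}\cdus=\infty$ then follows from the second assertion of Lemma~\ref{lem:dimension}.

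\emph{Closedness and convexity.} For $C\in\cd$ and $j_0\in[d]$, the marginal $C^{-j_0}$ is obtained by setting the $j_0$-th argument of $C$ equal to $1$. Hence the condition $C\in\cdu$ is a family of pointwise linear identities of the form $C(u_1,\ldots,u_{j_0-1},1,u_{j_0+1},\ldots,u_d)=\prod_{i\neq j_0}u_i$. Exchangeability is likewise a family of pointwise linear identities $C(\mathbf{u})=C(u_{\kappa(1)},\ldots,u_{\kappa(d)})$. Pointwise linear identities are preserved under convex combinations and under uniform limits. Since $\cd$ itself is convex and $d_\infty$-closed, $\cdus$ is convex and closed, and therefore compact.

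\emph{Linearly independent elements.} For $k\in\mathbb{N}$ I would define $C_k$ as the copula with density
\[
c_k(\mathbf{x}) = 1+\prod_{i=1}^d \cos(2\pi k x_i), \qquad \mathbf{x}\in[0,1]^d .
\]
Each $c_k$ is non-negative because the product lies in $[-1,1]$, and $c_k$ is symmetric in its arguments. Because $\int_0^1\cos(2\pi k t)\,dt=0$, integrating out any single coordinate removes the product term. So every $(d-1)$-dimensional marginal density equals $1$, which means $C_k^{-j_0}=\Pi_{d-1}$ for every $j_0$. In particular the one-dimensional marginals are uniform, so $C_k$ is indeed a copula, and $C_k\in\cdus$. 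The same construction works verbatim for $d=2$, where it yields exchangeable copulas; this is the case needed in the proof of Lemma~\ref{lem:schur}.

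For linear independence, suppose $\sum_{k=1}^n e_kC_k=\mathbf{0}$. Taking the mixed derivative $\partial^d/\partial x_1\cdots\partial x_d$, which recovers densities of absolutely continuous copulas, gives
\[
\sum_k e_k\Bigl(1+\prod_i\cos(2\pi k x_i)\Bigr)=0 \quad \text{for } \lambda_d\text{-a.e. } \mathbf{x}.
\]
I would integrate this identity against $\prod_i\cos(2\pi m x_i)$ for fixed $m$. The functions $\prod_i\cos(2\pi k x_i)$ are orthogonal in $L^2([0,1]^d)$ for distinct $k$, and each is orthogonal to the constant $1$, while $\int_{[0,1]^d}\prod_i\cos^2(2\pi m x_i)\,d\lambda_d=2^{-d}$. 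Hence $e_m2^{-d}=0$, so every $e_m$ vanishes. Thus $\{C_k:k\in\mathbb{N}\}$ is an infinite linearly independent subset of $\cdus$.

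The final step is to apply Lemma~\ref{lem:dimension} to the convex set $\cdus$, which gives $\operatorname{ind}\cdus=\infty$. I do not expect a genuine obstacle here; the only points that need care are checking that the marginal conditions are linear and closed, and justifying that an identity of distribution functions passes to densities, which is immediate because all the $C_k$ are absolutely continuous.
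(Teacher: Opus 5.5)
Your proof is correct. It differs from the paper's in the choice of the witnessing family.

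The paper uses the singular, completely dependent copulas $C_a$, $a\in(0,1)$. Each puts all its mass on the graph of $T_a(x_1,\ldots,x_{d-1})=1-\big((a+x_1+\cdots+x_{d-1})\,(\mathrm{mod}\,1)\big)$. Exchangeability and independence of the $(d-1)$-marginals are read off from the modular-sum construction, with details deferred to the literature. Linear independence comes from mutual singularity: the paper evaluates the signed measure $\sum_i e_i\mu_{C_{a_i}}$ on each graph $\Gamma(T_{a_j})$.

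You instead use smooth perturbations of $\Pi_d$, namely $C_k(\mathbf{u})=\prod_i u_i+\prod_i\frac{\sin(2\pi k u_i)}{2\pi k}$. Membership in $\cdus$ is then a one-line computation, since the perturbation vanishes once any argument equals $1$. Independence follows from $L^2$-orthogonality of the densities. Your route is more elementary and fully self-contained.

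The paper's route buys an uncountable family of pairwise mutually singular elements, which shows that $\cdus$ even contains completely dependent copulas. That mutual singularity is reused in the $\alpha$-migrative section to see that the copulas $\varphi(C_b)$ are linearly independent. With your family you would need one extra observation there. The restriction of $\mu_{\varphi(C)}$ to $(\alpha,1]^d$ is a rescaled copy of $\mu_C$, so linear independence of the $C_k$ transfers through $\varphi$.
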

\begin{proof}
Obviously $\cdus$ is closed and convex, so it suffices to prove that $\cdus$ 
contains infinitely many linearly independent elements 
(linearly independent as elements of the Banach space $(C([0,1]^d),\Vert \cdot \Vert_\infty)$\FD{)}. 
Suppose that $a \in (0,1)$, let $X_1,\ldots,X_{d-1}$ be independent and uniformly distributed on 
$[0,1]$ and set 
$$
X_d:= 1- \left((a + X_1+X_2+\ldots+X_{d-1}) (\text{mod1}) \right).
$$  
Then $X_d$ is uniformly distributed on $[0,1]$, we have that $(a + \sum_{i=1}^d X_i) (\text{mod1})=0$, and that 
$$
X_j = 1- \left(\left( a + \sum_{i \neq j} X_i \right)(\text{mod1})\right)
$$
almost surely. 
Moreover, proceeding as in \cite{KBST} it is straightforward to verify that
the corresponding 
copula $C_a$ of $(X_1,\ldots,\FD{X_d})$ fulfills $C_a \in \cdus \subseteq \cdu$, 
and that $C_a$ is completely dependent and assigns all mass to the 
graph $\Gamma(T_a)$ of the function $T_a: [0,1]^{d-1} \rightarrow \FD{(0,1]}$, given by 
$T_a(x_1,\ldots,x_{d-1})= 1-(a + x_1+x_2+\ldots+x_{d-1})(\text{mod1})$. Figure \ref{fig:modulo}
depicts a sample of the copula $C_a \in \cdus$ for $d=3$ and $a=\frac{1}{4}$.\\
Suppose now that $a_1,\ldots,a_n$ are pairwise different elements in $(0,1)$. Then the copulas  
$C_{a_1},\ldots,C_{a_n}$ have pairwise disjoint supports 
$\operatorname{supp}(C_{a_1}),\ldots,\operatorname{supp}(C_{a_n})$  (consisting of segments of hyperplanes) \FD{and, in particular, $\mu_{C_{a_i}}(\Gamma(T_{a_j}))=\mathbf{1}_{\{i=j\}}$}. 
Assume that there are constants $e_1,\ldots,e_n \in \mathbb{R}$ such that we have 
$\sum_{i=1}^n e_i C_{a_i} = \mathbf{0} \in C([0,1]^d)$. Then the finite signed measure
$\vartheta$, given by 
$$
\vartheta(G):= \sum_{i=1}^n e_i \, \mu_{C_{a_i}}(G),\quad G \in \mathcal{B}([0,1]^d)
$$  
is the zero measure, i.e., $\vartheta(G)=0$ holds for every $G \in \mathcal{B}([0,1]^d)$. 
Considering $G=\Gamma(T_{a_1})$ and using disjointness of the supports it follows that
\begin{align*}
0=\vartheta(\Gamma(T_{a_1}))= \sum_{i=1}^n e_i \, \mu_{C_{a_i}}(\Gamma(T_{a_1})) = 
e_1 \mu_{C_{a_1}}(\Gamma(T_{a_1})) = e_1.
\end{align*}
Proceeding analogously eventually yields $e_1=e_2=\ldots=e_n=0$, implying that 
$C_{a_1},\ldots,C_{a_n}$ are linearly independent. Since $n$ was arbitrary it altogether follows that
$\cdus$ contains infinitely many linearly independent elements, so 
a direct application of Lemma \ref{lem:dimension} completes the proof. 
\end{proof}

\begin{figure}[htbp]
  \centering
  \includegraphics[width=0.5\textwidth]{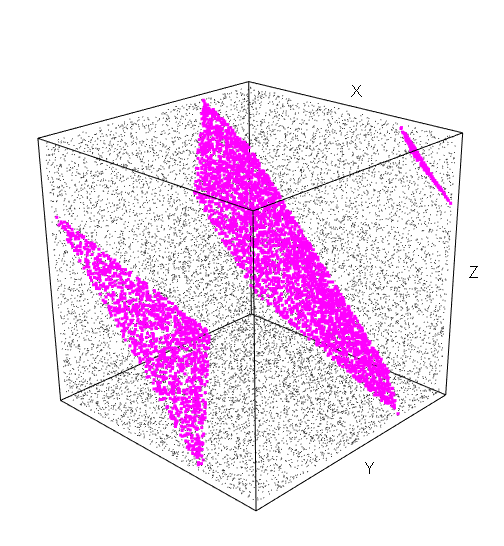}
  \caption{A sample of size $n=\FD{5{,}000}$ for the copula $C_a \in \hat{\mathcal{C}}_{3}^{\Pi_2}$ 
  with $a=\frac{1}{4}$ as considered in the proof of Lemma \ref{lem:mini.infty} (magenta), plus all bivariate marginal samples (gray).}
  \label{fig:modulo}
\end{figure}

\begin{thm}\FD{\label{thm:cdu}}
\FD{Let $d\ge3$.} $(\cdus,d_\infty)$ and $(\cdu,d_\infty)$ are homeomorphic to the Hilbert cube $(\mathcal{H},\rho)$. 
Moreover, $\cdus$ is a \FD{$Z$-set} of $\cdu$ and $\cdu$ is a $Z$-set \FD{of} $\cd$, and  
there exists some homeomorphism $h: [0,1]^2 \times \mathcal{H} \rightarrow \cd$ with 
$$
h(\{0\} \times [0,1] \times \mathcal{H})=\cdu, \quad h(\{0\} \times \{0\} \times \mathcal{H})=\cdus.
$$
Both, $\cdus$ and $\cdu$ have the DC and the FP property, $\cdus$ is a retract of $\cdu$ and 
$\cdu$ is a retract of $\cd$.
\end{thm}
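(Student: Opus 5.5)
We apply Theorem~\ref{thm:main.tool} with $n=3$, $A_1=\cdus$, $A_2=\cdu$, $A_3=\cd$ and $N=C([0,1]^d)$. All three sets are convex and compact subsets of $(C([0,1]^d),\Vert\cdot\Vert_\infty)$: for $\cd$ this is classical, and for $\cdu$ and $\cdus$ it holds because the defining constraints (all $(d-1)$-marginals equal $\Pi_{d-1}$, exchangeability) are linear in $C$ and stable under uniform limits. Lemma~\ref{lem:mini.infty} gives $\operatorname{ind}\cdus=\infty$. Once the two $Z$-set assertions are established, Theorem~\ref{thm:main.tool} yields the homeomorphism $h$, the DC and FP properties, and the retract statements. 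It also shows that $\cdu$ and $\cdus$ are homeomorphic to $\mathcal{H}$, via Theorem~\ref{thm:keller} and Lemma~\ref{lem:ind.mono}, since $\cdus\subseteq\cdu$.

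\emph{$\cdu$ is a $Z$-set of $\cd$.} For $b\in(0,1)$ set $f_b(C):=\mathcal{OS}_b(C,M)$. This map is continuous and satisfies $d_\infty(f_b(C),C)\le d(1-b)$ by the Lipschitz property. On $(b,1]^d$ the measure $\mu_{f_b(C)}$ puts mass $1-b$ on the diagonal segment. Hence every $(d-1)$-marginal of $f_b(C)$ gives positive mass to a segment of the diagonal in $[0,1]^{d-1}$, which is a $\lambda_{d-1}$-null set. Since $d\ge3$, so that $d-1\ge2$, this marginal cannot be $\Pi_{d-1}$. Therefore $f_b$ maps $\cd$ into $\cd\setminus\cdu$, and letting $b\to1$ gives the claim.

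\emph{$\cdus$ is a $Z$-set of $\cdu$.} This is the step I expect to be the main obstacle, because ordinal sums destroy the independence of the marginals and cannot be used here. Instead I would use convex mixtures. Fix one non-exchangeable copula $C^\ast\in\cdu$ and set $g_b(C):=bC+(1-b)C^\ast$. Then $g_b$ maps $\cdu$ into itself, is continuous, and satisfies $d_\infty(g_b(C),C)\le 1-b$. For exchangeable $C$ the copula $g_b(C)$ is non-exchangeable, since otherwise $C^\ast$ would be exchangeable. However, $g_b(C)$ might be exchangeable when $C$ itself is not.

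To handle this I would perturb more carefully. Let $\sigma$ be a transposition with $C^\ast\circ\sigma\neq C^\ast$. Consider the asymmetry functional $\alpha(C):=\Vert C-C\circ\sigma\Vert_\infty$ and set
\[
g_b(C):=C+(1-b)\,\phi(\alpha(C))\,(C^\ast-C),
\]
with $\phi$ continuous, $\phi(0)=1$, and $\phi$ vanishing for arguments at least some small $\eta>0$. Mixing in $C^\ast$ only when $C$ is nearly exchangeable keeps $g_b(C)$ non-exchangeable. Indeed, $g_b(C)-g_b(C)\circ\sigma=(1-t)(C-C\circ\sigma)+t(C^\ast-C^\ast\circ\sigma)$ with $t=(1-b)\phi(\alpha(C))$. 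When $\alpha(C)\ge\eta$ we have $t=0$, so the asymmetry is $\alpha(C)\ge\eta>0$. When $\alpha(C)<\eta$, choose $\eta$ small relative to $(1-b)\alpha(C^\ast)$ and $\phi$ close to $1$ on $[0,\eta/2]$; then the second term dominates, and a short case analysis is needed to keep the asymmetry nonzero in the transition region.

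A cleaner alternative I would try first is a map whose image is automatically non-exchangeable, analogous to the ordinal-sum trick but compatible with independent marginals. The idea is to patch a tiny block: replace $\mu_C$ on a small cube $[u,u+r]^d$ by a rescaled non-exchangeable element of $\cdu$ of the same mass. This preserves the $(d-1)$-marginals only if the restricted marginals agree, which is delicate.

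Therefore I would settle on the functional-mixing approach, making the constants explicit so that $\Vert g_b(C)-g_b(C)\circ\sigma\Vert_\infty>0$ for all $C$.
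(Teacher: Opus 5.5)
Your reduction to Theorem~\ref{thm:main.tool} with $A_1=\cdus\subset A_2=\cdu\subset A_3=\cd$ is correct and essentially the paper's. So are your use of Lemma~\ref{lem:mini.infty} and your proof that $\cdu$ is a $Z$-set of $\cd$ via $\mathcal{OS}_b(C,M)$. The gap is the step you flag yourself: that $\cdus$ is a $Z$-set of $\cdu$. It cannot be closed by tuning constants. Take any map $g(C)=(1-t(C))\,C+t(C)\,C^\ast$ with a fixed non-exchangeable $C^\ast\in\cdu$ and a continuous $t:\cdu\to[0,1]$. Once $g$ is close to the identity, it must hit $\cdus$.

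To see this, write $(C\circ\kappa)(\mathbf x)=C(x_{\kappa(1)},\ldots,x_{\kappa(d)})$ and note that $\cdu$ is invariant under coordinate permutations. Put $\bar C:=\frac1{d!}\sum_{\kappa}C^\ast\circ\kappa\in\cdus$ and, for $s\in[0,\frac1{d!}]$,
\[
C_s:=\frac{\bar C-sC^\ast}{1-s}=\frac{1}{1-s}\Big[\big(\tfrac1{d!}-s\big)C^\ast+\tfrac1{d!}\sum_{\kappa\neq\mathrm{id}}C^\ast\circ\kappa\Big]\in\cdu .
\]
Then $g(C_s)=\bar C$ whenever $t(C_s)=s$. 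We have $t(C_0)\ge 0$. On the other hand, $d_\infty(g(C),C)=t(C)\,d_\infty(C^\ast,C)$, so $\varepsilon$-closeness with $\varepsilon<\frac1{d!}\,d_\infty(C^\ast,C_{1/d!})$ forces $t(C_{1/d!})<\frac1{d!}$. This distance is positive because $C^\ast\neq\bar C$. In your construction the same holds for $b$ close to $1$. By the intermediate value theorem, some $C_s\in\cdu$ is mapped onto the exchangeable copula $\bar C$. Even more simply, along the segment from $C^\ast$ to $C^\ast\circ\sigma$ your image passes through $\frac12(C^\ast+C^\ast\circ\sigma)$. So your $\sigma$-asymmetry necessarily vanishes in the transition region, and the postponed case analysis cannot succeed.

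What is missing is a perturbation that is non-exchangeable for \emph{every} input, as in the ordinal-sum trick, but preserves independent $(d-1)$-marginals. This is your discarded block-patching idea, carried out on the whole cube rather than a small block. The paper splits each axis into $[0,a]$ and $[a,1]$ and uses weights on $\{1,2\}^d$ given by $\tau(\mathbf i)=a^{k}(1-a)^{d-k}+(-1)^{d-k}(1-a)^d$, with $k$ the number of ones in $\mathbf i$. For $a\in(\frac12,1)$ these weights are nonnegative and have product $(d-1)$-dimensional marginals, which is the uniformity condition of \cite{dstoch}. The paper places a rescaled copy of $C$ in $[0,a]^d$ and rescaled copies of a fixed $R\in\cdu\setminus\cdus$ in all other boxes, weighted by $\tau$. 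By \cite[Theorem 3.4]{dstoch} the result lies in $\cdu$. It is never exchangeable: take a transposition $(p\,q)$ not leaving $\mu_R$ invariant and a box with $i_p=i_q=2$ and positive weight. That box is $(p\,q)$-invariant and carries a copy of $R$. Finally, the result tends to $C$ uniformly as $a\to1$.
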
 
\begin{proof}
Using Lemma \ref{lem:mini.infty} and applying Theorem \ref{thm:keller} directly yields the 
first assertion. 
Considering Theorem \ref{thm:main.tool} it suffices to show that $\cdus$ is Z-set of $\cdu$ and 
that $\cdu$ is Z-set of $\cd$, which can be done as follows: \\
(i) We show that $\cdus$ is a $Z$-set of $\cdu$ and work with so-called generalized 
transformation matrices using the results from \cite{dstoch}. 
For every $\mathbf{i} \in  \{1,2\}^d $ and $l \in [d]$ define $\mathcal{O}_{l}: \{1,2\}^d \to \{0,\ldots,d\}$ by ($\mathcal{O}$ \FD{stands} for the number of ones)
$$
\mathcal{O}_{l}(\mathbf{i}) = \sum_{j=l}^d \mathbf{1}_{\{1\}}(i_j).
$$
For every $a \in (\frac{1}{2},1)$ define the probability distribution $\tau$ on $\{1,2\}^d$ by 
(we write $\tau(\mathbf{i})$ instead of $\tau(\{\mathbf{i}\})$)
\begin{equation}
\tau(\mathbf{i}) = a^{\mathcal{O}_{1}(\mathbf{i})}(1-a)^{d-\mathcal{O}_{1}(\mathbf{i})}\, + \, 
(-1)^{d-\mathcal{O}_{1}(\mathbf{i})}(1-a)^d. 
\end{equation}
Considering that for $a\in (\frac{1}{2},1)$ we have $1-a < \frac{1}{2} < a$ it is straightforward 
to verify that $\tau(\mathbf{i}) \in [0,1]$ for every $\mathbf{i} \in \{1,2\}^d$, that \FD{$\tau(\mathbf{i})=(1-a)^{d-k}\big(a^{k}+(-1)^{d-k}(1-a)^{k}\big)$ with $k=\mathcal{O}_1(\mathbf{i})$, so $\tau(\mathbf{i})=0$ if and only if $\mathbf{i}=(2,\ldots,2)$ and $d$ is odd; in particular $\tau((1,\ldots,1))=a^d+(1-a)^d>0$}, that  
$\tau$ is really a probability distribution and that 
$\tau$ is invariant to permutations. Moreover, a simple calculation shows that every 
univariate marginal distribution of $\tau$ assigns mass $a$ to $\{1\}$ and 
$(1-a)$ to $\{2\}$. For every $j \in [d]$ set $E^j_1=[0,a], E^j_2=[a,1]$ as well as 
$e^j_0=0,e^j_1=a,e^j_2=1$. 
Then for every $(i_2,\ldots,i_d) \in \{1,2\}^{d-1}$ we get
\begin{align*}
\tau(\{1,2\}\times \{i_2\}\times \cdots \times \{i_d\} ) &= 
\tau((1,i_2,\ldots,i_d)) + \tau((2,i_2,\ldots,i_d)) \\
&= a^{1+\mathcal{O}_2(\mathbf{i})}\,(1-a)^{d-1-\mathcal{O}_{2}(\mathbf{i})} + 
   (-1)^{d-1-\mathcal{O}_{2}(\mathbf{i})}(1-a)^d \\
   & \quad + a^{\mathcal{O}_2(\mathbf{i})}\,(1-a)^{d-\mathcal{O}_{2}(\mathbf{i})} + 
   (-1)^{d-\mathcal{O}_{2}(\mathbf{i})}(1-a)^d \\
   &= a^{\mathcal{O}_2(\mathbf{i})}\,(1-a)^{d-1-\mathcal{O}_{2}(\mathbf{i})} 
   = \prod_{j=2}^d \lambda(E^j_{i_j}),
\end{align*}
so $\tau$ fulfills the uniformity condition w.r.t. the first coordinate as stated in 
Definition 3.2 in \cite{dstoch}. Using permutation invariance of $\tau$ it follows that $\tau$ 
fulfills the uniformity condition w.r.t. every coordinate.  \\
For every $\mathbf{i} \in \{1,2\}^d$, the probability distribution $\tau$ induces 
an affine contraction $f_{\mathbf{i}} : [0,1]^d \to \bigtimes_{j=1}^d E^j_{i_j}$, given by
\[
f_{\mathbf{i}}(x_1, \ldots, x_d) :=
\begin{pmatrix}
e_{i_1-1}^1 \\
e_{i_2-1}^2 \\
\vdots \\
e_{i_d-1}^d
\end{pmatrix}
+
\begin{pmatrix}
(e_{i_1}^1 - e_{i_1-1}^1)\, x_1 \\
(e_{i_2}^2 - e_{i_2-1}^2)\, x_2 \\
\vdots \\
(e_{i_d}^d - e_{i_d-1}^d)\, x_d
\end{pmatrix}\FD{.}
\]
Letting $C_{\mathbf{i}}$ denote an arbitrary element of $\cdu$ for every 
$\mathbf{i} \in \{1,2\}^d$, it is straightforward to verify that $\FD{\nu_a}$, defined by 
(as before $\mu_{C_\mathbf{i}}^{f_{\mathbf{i}}}$ denotes the push-forward of $\mu_{C_\mathbf{i}}$ via $f_{\mathbf{i}}$)
$$
\nu_a(G)= \sum_{\mathbf{i} \in \{1,2\}^d: \tau(\mathbf{i})>0} \tau(\mathbf{i})\, 
\mu_{C_\mathbf{i}}^{f_{\mathbf{i}}}(G), \quad G \in \mathcal{B}([0,1]^d)
$$
is a $d$-stochastic measure. Moreover, according to \cite[Theorem 3.4]{dstoch}, the 
corresponding copula $C_{\nu_a}$ is an element of $\cdu$ since $\tau$ fulfills the uniformity condition w.r.t. \FD{every} coordinate. \\
Let $R \in \cdu \setminus  \cdus$ be arbitrary but fixed. 
For every $C \in \cdu$ set  
$$ 
C_{\mathbf{i}}:=
\begin{cases}
C & \text{if } \mathbf{i}=(1,1,\ldots,1), \\[3mm]
R & \text{otherwise.} 
\end{cases}
$$
Then writing $\varphi_{a,R}(C):=C_{\nu_a}$ defines a mapping $\varphi_{a,R}: \cdu \to \cdu$.  
Since $R$ is non\FD{-}exchangeable, the copula $\varphi_{a,R}(C)$ is not exchangeable for any 
$C \in \cdu$\FD{: choose a transposition $\kappa=(p\,q)$ under which $\mu_R$ is not invariant and $\mathbf{i}$ with $i_p=i_q=2$ and $\tau(\mathbf{i})>0$ (e.g.\ exactly one other coordinate equal to $1$); the box $\bigtimes_j E^j_{i_j}$ is invariant under $\kappa$ and there $\varphi_{a,R}(C)$ is a rescaled copy of $R$}.   
For every fixed $a \in (\frac{1}{2},1)$ this defines a mapping $\varphi_{a,R}$ from 
$\cdu$ to $\cdu \setminus \cdus$ and this mapping is easily seen to be continuous. \\
Finally, for every $\varepsilon>0$ we can obviously choose $a \in (\frac{1}{2},1)$ sufficiently close to 
$1$ so that 
$$
\sup_{C \in \cdu} d_\infty \left(\varphi_{a,R}(C),C \right) < \varepsilon
$$  
holds. This shows that $\cdus$ is a $Z$-set of $\cdu$. \\
(ii) Fix some $R \in \cd$ and let $\varepsilon>0$ be arbitrary but fixed. 
Choosing $a \in (0,1)$ sufficiently close to $1$ we obtain   
$$
\sup_{C \in \cd} d_\infty \left(\mathcal{OS}_a(C,R),C \right) < \varepsilon,
$$ 
which completes the proof since obviously $\mathcal{OS}_a(C,R) \in \cd \setminus \cdu$
holds for every copula $C \in \cd$.
\end{proof} 
We now focus on the full families $\cd$ and $\qd$. 
\begin{lem}\label{lem:cdqd}
$\cd$ is a $Z$-set of $\qd$, $\cds$ is a $Z$-set of $\cd$, and 
$\qds$ is a $Z$-set of $\qd$. 
\end{lem}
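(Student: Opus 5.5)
All three assertions follow the paper's standard pattern. Since $\cd$, $\qd$, $\cds$ and $\qds$ are compact, for each pair $A\subseteq\Omega$ it suffices to produce, for every $\varepsilon>0$, one continuous map $f:\Omega\to\Omega\setminus A$ with $\sup_{x\in\Omega}d_\infty(f(x),x)<\varepsilon$. All three maps will be ordinal sums $f_a(Q):=\mathcal{OS}_a(Q,R)$ with a fixed second argument $R$ and $a\in(0,1)$ close to $1$.

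\emph{Uniform closeness and continuity.} By the $1$-Lipschitz property in each coordinate one gets $\sup_{Q\in\qd}d_\infty(\mathcal{OS}_a(Q,R),Q)\le c_d(1-a)$ for a constant $c_d$ depending only on $d$, exactly as in the bivariate estimate $3(1-a)$ used earlier. Continuity holds because $d_\infty(\mathcal{OS}_a(Q,R),\mathcal{OS}_a(Q',R))\le a\,d_\infty(Q,Q')$.

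\emph{$\cd$ is a $Z$-set of $\qd$.} Choose $R\in\qd\setminus\cd$, a proper quasi-copula; one exists since $\cd\subsetneq\qd$ for every $d\ge2$. By the fact recalled after \eqref{eq:os}, $\mathcal{OS}_a(Q,R)$ is then a proper quasi-copula for every $Q\in\qd$. Hence $f_a$ maps $\qd$ into $\qd\setminus\cd$, and this case is done.

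\emph{$\cds$ in $\cd$ and $\qds$ in $\qd$.} Choose $R\in\cd$ non-exchangeable. For $d=2$ one can take a shuffle of $M$, e.g.\ the copula of the example in Lemma~\ref{lem:schur}(iii); for general $d$ the product $R:=C^\ast(u_1,u_2)\prod_{j\ge3}u_j$ works. We then show that $\mathcal{OS}_a(Q,R)$ is non-exchangeable for every $Q$ (in $\cd$, respectively $\qd$). On the cube $[a,1]^d$ the ordinal sum equals
\[
a+(1-a)R\Big(\tfrac{u_1-a}{1-a},\ldots,\tfrac{u_d-a}{1-a}\Big),
\]
because there $M(\mathbf u)>a$ (apart from the boundary). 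The coordinate-wise affine rescaling commutes with permutations of coordinates. So if $R(\mathbf v)\ne R(\mathbf v^\kappa)$ for some permutation $\kappa$, with $\mathbf v^\kappa=(v_{\kappa(1)},\ldots,v_{\kappa(d)})$ and where one may take $\mathbf v\in(0,1]^d$ by continuity, then $\mathcal{OS}_a(Q,R)$ differs at the corresponding pair of points in $(a,1]^d$. This is the same argument as in Lemma~\ref{lem:schur.os}. Moreover $\mathcal{OS}_a(Q,R)$ lies in $\cd$ if $Q\in\cd$ and in $\qd$ if $Q\in\qd$. Thus $f_a$ maps $\cd\to\cd\setminus\cds$ and $\qd\to\qd\setminus\qds$. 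Note that the statement asks for maps defined on the whole ambient space $\cd$ or $\qd$, not merely on the exchangeable subclass, and this is exactly what $f_a$ provides.

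\emph{Expected difficulty.} The only mildly delicate points are the explicit uniform bound $c_d(1-a)$ in dimension $d$ and the existence of a non-exchangeable copula in every dimension $d\ge2$. Both are routine: the bound follows from $|Q(\mathbf u)-Q(\mathbf u')|\le\sum_i|u_i-u_i'|$ applied separately on the two pieces of \eqref{eq:os}.
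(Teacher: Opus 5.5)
Your proof is correct and is essentially the paper's: you use ordinal sums $\mathcal{OS}_a(\cdot,R)$ with $a\to1$, taking $R$ to be a fixed proper quasi-copula for $\cd\subseteq\qd$ and a fixed non-exchangeable copula for $\cds\subseteq\cd$ and $\qds\subseteq\qd$. You add routine details the paper omits: a uniform bound such as $(d+1)(1-a)$, the $a$-Lipschitz continuity of $f_a$, and the transfer of non-exchangeability from $R$ to the block $(a,1]^d$. All of these are fine.

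There is one concrete slip. The copula you cite for $d=2$ is the one from Lemma~\ref{lem:schur}(iii), concentrated on the graph of $T(x)=(x+\frac12)\bmod 1$. It is \emph{exchangeable}, because $T\circ T=\id$; for instance $C^\ast(\frac15,\frac34)=C^\ast(\frac34,\frac15)=\frac15$. With it, $f_a$ would map $\cds$ into $\cds$. Use instead, for example, the completely dependent copula of $x\mapsto(x+\frac13)\bmod 1$, for which $C(\frac13,\frac23)=\frac13\neq0=C(\frac23,\frac13)$.

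Your product construction for $d\ge3$ survives even with the exchangeable $C^\ast$. Swapping coordinates $1$ and $3$ already breaks symmetry whenever $C^\ast\neq\Pi$: for example $R(\frac15,\frac34,1,\ldots,1)=\frac15\neq\frac{3}{20}=R(1,\frac34,\frac15,1,\ldots,1)$.
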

\begin{proof}
It has already \FD{been} mentioned that $\cd$ is a closed convex subset of $\qd$. Let $\varepsilon>0$
be arbitrary but fixed and consider some proper quasi-copula $R \in \qd\setminus \cd$. 
Then for every $a \in (0,1)$ the ordinal sum $\mathcal{OS}_a(Q,R)$ is a proper quasi-copula, i.e., 
$\mathcal{OS}_a(Q,R) \in \qd \setminus \cd$. 
Choosing $a \in (0,1)$ sufficiently close to $1$ we obtain   
$$
\sup_{Q \in \qd} d_\infty \left(\mathcal{OS}_a(Q,R),Q \right) < \varepsilon,
$$ 
which proves the first assertion. The other two statements can be proved in the same manner:  
In fact, letting $R \in \cd$ denote an arbitrary non-exchangeable copula, we have that 
$\mathcal{OS}_a(C,R) \in \cd$ is non-exchangeable for every $C \in \cd$; moreover,  
$\mathcal{OS}_a(Q,R) \in \qd \setminus \qds$ for every $Q \in \qd$. 
The results now follow from the fact that 
both, $\cds$ and $\qds$ are compact and hence closed.
\end{proof}

\FD{
\begin{rem}
Notice that, using an exchangeable $R$ one also gets that $\cdus$ is a $Z$-set of $\cds$ ($d\ge3$) and that $\cds$ is a $Z$-set of $\qds$ (exchangeable proper $R$), which would give the chain $\cdus\subset\cds\subset\qds\subset\qd$ and a homeomorphism $[0,1]^3\times\mathcal{H}\to\qd$.
\end{rem}}

\begin{thm}\label{thm:cdqd}
There exists a homeomorphism $h : [0,1] \times \mathcal{H} \to \qd$ with 
$h(\{0\} \times \mathcal{H})= \cd$. Both $\cd$ and $\qd$ have the DC and the FP property and 
$\cd$ is a retract of $\qd$. \\
Moreover, there exists a homeomorphism $g : [0,1] \times \mathcal{H} \to \qd$ with 
$g(\{0\} \times \mathcal{H})= \qds$. The family $\qds$ has the DC and the FP property and 
is a retract of $\qd$. \FD{Analogously for $\cds$ and $\cd$.} 
\end{thm}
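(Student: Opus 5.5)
The theorem follows from Theorem \ref{thm:main.tool} with $n=2$ and $N=C([0,1]^d)$, applied to three pairs: $A_1=\cd$, $A_2=\qd$; then $A_1=\qds$, $A_2=\qd$; then $A_1=\cds$, $A_2=\cd$.

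First I check the hypotheses on convexity and compactness. $\cd$ and $\qd$ are convex and compact in $(C([0,1]^d),\Vert\cdot\Vert_\infty)$, as recalled in Section 2. $\cds$ and $\qds$ are cut out by the linear, pointwise-closed conditions $Q(\xb)=Q(\xb_\kappa)$ for all permutations $\kappa$. They are therefore closed convex subsets of compact convex sets, hence compact and convex. The inclusions $\cd\subset\qd$, $\qds\subset\qd$ and $\cds\subset\cd$ are all proper, since proper quasi-copulas and non-exchangeable copulas exist.

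Next I need $\operatorname{ind}A_1=\infty$ for each smaller set. Here I use Lemma \ref{lem:mini.infty}. For $d\ge3$ it gives infinitely many linearly independent elements in $\cdus$. Since $\cdus\subseteq\cds\subseteq\cd$ and $\cds\subseteq\qds$, each of $\cds$, $\qds$ and $\cd$ contains infinitely many linearly independent elements. Lemma \ref{lem:dimension} then yields that their small inductive dimension is infinite.

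For $d=2$ I would use the remark in the proof of Lemma \ref{lem:schur}, namely $\hat{\mathcal{C}}_2^{\Pi_1}=\cdbs$. The linearly independent family constructed in Lemma \ref{lem:mini.infty} also makes sense for $d=2$: it consists of completely dependent exchangeable copulas on $x_2=1-((a+x_1)\bmod 1)$, and the disjoint-support argument carries over verbatim. Alternatively, the LSL copulas $S_{\delta_a}$ from Lemma \ref{lem:lsl} are exchangeable and linearly independent. Either way, $\operatorname{ind}\cdbs=\infty$, and monotonicity (Lemma \ref{lem:ind.mono}) transfers this to $\qds$ and $\cd$.

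The $Z$-set hypotheses are exactly Lemma \ref{lem:cdqd}.

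Theorem \ref{thm:main.tool} then gives, for each pair, a homeomorphism $[0,1]\times\mathcal{H}\to A_2$ mapping $\{0\}\times\mathcal{H}$ onto $A_1$. It also gives the DC and FP properties of all sets involved and the retraction of $A_1$ onto $A_2$. The assertions for $\qd$ itself follow because it is homeomorphic to $\mathcal{H}$.

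The only step needing care is the $d=2$ case of the dimension argument. Lemma \ref{lem:mini.infty} was formally stated under the standing assumption $d\ge3$, so I would explicitly point out that its construction, or the LSL family, covers $d=2$. Everything else is direct citation.
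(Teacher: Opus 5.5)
Your proposal is correct and matches the paper's argument: Lemma~\ref{lem:cdqd} supplies the $Z$-set hypotheses, Lemma~\ref{lem:mini.infty} with monotonicity of $\operatorname{ind}$ gives $\operatorname{ind}A_1=\infty$, and Theorem~\ref{thm:main.tool} with $n=2$ does the rest for each pair. Your explicit check of $d=2$ is a useful addition, since the paper tacitly uses Lemma~\ref{lem:mini.infty} for $d=2$ as in Lemma~\ref{lem:schur}; one wording slip is that Theorem~\ref{thm:main.tool} makes $A_1$ a retract of $A_2$, i.e., gives a retraction of $A_2$ onto $A_1$, not of $A_1$ onto $A_2$.
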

\begin{proof}
According to Lemma \ref{lem:cdqd}, $\cd$ is a $Z$-set of $\qd$. Moreover, considering 
$\cdus \subset \cd \subset \qd$ and using monotonicity of the small inductive dimension, 
Lemma \ref{lem:mini.infty} yields $\operatorname{ind}\cd= \infty$. Therefore, 
considering $n=2, A_1=\cd, A_2=\qd$ and $N=C([0,1]^d)$ the stated result now directly follows from 
Theorem \ref{thm:main.tool}. The second assertion follows in the same manner.
\end{proof}
Since $[0,1] \times \mathcal{H}, \{0\} \times \mathcal{H}$ and $\mathcal{H}^k$ (with 
arbitrary $k \in \mathbb{N}$) are homeomorphic to $\mathcal{H}$, Theorem \ref{thm:cdqd} 
has the following immediate corollary ($\rho_k$ denoting the standard maximum extension of 
$\rho$ to $\mathcal{H}^k$ and $d_{\infty,k}$ the maximum extension of $d_\infty$ from 
$\qd$ to $\qd^k$): 
\begin{cor}
For every $k \in \mathbb{N}$ all the following metric spaces are homeomorphic: 
\begin{itemize}
\item $(\mathcal{H},\rho),\,(\mathcal{H}^k,\rho_k)$,
\item $(\cd^k,d_{\infty,k}),(\cd,d_{\infty}),(\cds,d_{\infty}),(\cdus,d_{\infty}), 
(\cdu,d_{\infty})$,
\item $(\qd^k,d_{\infty,k}),(\qd,d_{\infty}),(\qds,d_{\infty})$. 
\end{itemize}
\end{cor}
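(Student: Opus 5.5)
The plan is to read off all the homeomorphisms from facts already established, and to use the Hilbert cube itself as the common reference space.

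First, $\mathcal{H}^k$ is homeomorphic to $\mathcal{H}$. Indeed, $[-1,1]^{\mathbb{N}}$ raised to the power $k$ is again a countable product of copies of $[-1,1]$. An explicit homeomorphism re-indexes the coordinates via a bijection $\mathbb{N}\times[k]\to\mathbb{N}$. Because $\rho$ and the maximum metric $\rho_k$ both metrize coordinatewise convergence, this map is continuous in both directions. Alternatively, $\mathcal{H}^k$ is a compact convex subset of a normed space with infinite small inductive dimension, so Theorem~\ref{thm:keller} applies.

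Second, the single spaces on the list. Each of $(\cd,d_\infty)$, $(\cds,d_\infty)$, $(\qd,d_\infty)$ and $(\qds,d_\infty)$ is homeomorphic to $\mathcal{H}$. For $\cd$ this comes from the proof of Theorem~\ref{thm:cdqd}, where $\operatorname{ind}\cd=\infty$ was shown and Theorem~\ref{thm:keller} applies. For the other three it comes from Theorem~\ref{thm:main.tool}, which makes every member of the chain homeomorphic to $\mathcal{H}$. The spaces $\cdus$ and $\cdu$ are covered by Theorem~\ref{thm:cdu}; here I would note that the statement implicitly needs $d\ge3$, and for $d=2$ they coincide with $\cds$ and $\cd$. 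As a direct check, $\cds$ and $\qds$ are compact and convex and contain $\cdus$, so they have infinite ind by Lemma~\ref{lem:mini.infty} and Lemma~\ref{lem:ind.mono}.

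Third, the product spaces. The maximum metric $d_{\infty,k}$ induces the product topology on $\cd^k$ and on $\qd^k$. Hence a homeomorphism $\phi:\cd\to\mathcal{H}$ yields the homeomorphism $\phi^{\times k}:(\cd^k,d_{\infty,k})\to(\mathcal{H}^k,\rho_k)$, and the same holds for $\qd$. Composing with the first step, every space on the list is homeomorphic to $\mathcal{H}$. Transitivity of the relation ``is homeomorphic to'' then finishes the proof.

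There is no genuine obstacle. The only points needing care are that the maximum metric really induces the product topology, which is standard, and the dimension caveat for $\cdu$ and $\cdus$.
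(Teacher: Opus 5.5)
Your proposal is correct and follows essentially the paper's route: every listed space is homeomorphic to $\mathcal{H}$ by Theorems~\ref{thm:cdu} and~\ref{thm:cdqd} (ultimately via Theorem~\ref{thm:keller}), $\mathcal{H}^k\cong\mathcal{H}$ holds, and products of homeomorphisms handle $\cd^k$ and $\qd^k$. The only cosmetic difference is that the paper reads off $\qd\cong[0,1]\times\mathcal{H}\cong\mathcal{H}$ and $\cd\cong\{0\}\times\mathcal{H}\cong\mathcal{H}$ from the homeomorphism in Theorem~\ref{thm:cdqd}, whereas you invoke Keller's theorem directly; your $d=2$ remark is also fine, since in that case $\cdu=\cd$ and $\cdus=\cds$.
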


\FD{\section{Some families of multivariate copulas}

Analogously to the previous section about bivariate classes, here we investigate two relevant subclasses of copulas in dimension $d\ge 2$.
}
\subsection{Multivariate spatially homogeneous copulas}

We quickly recall the definition of multivariate spatially homogeneous copulas as 
motivated by \cite{Brown1966} in the context of Markov operators and studied in the copula
setting in \cite{DuranteFernandezTrutschnig2020}. 
For every $a \in \FD{[0,1)}$ we will let $r_a: [0,1] \to [0,1)$ denote the mapping (rotation), given by
$r_a(x)= (x+a)\text{mod1}$. It is straightforward to verify that each $r_a$ is 
$\lambda$-preserving, i.e., that $\lambda^{r_a}=\lambda$ holds. 
For every $a \in [0,1)$ define the multivariate version $R_a: [0,1]^{d-1} \to \FD{[0,1)^{d-1}}$ by
$$
R_a(y_2,\ldots,y_d)=\left(r_a(y_2),\ldots,r_a(y_d) \right)
$$
Following \cite{DuranteFernandezTrutschnig2020} we will call a copula $C \in \cd$ 
spatially homogeneous if and only if there exists some $\vartheta \in \FD{\mathcal{P}([0,1]^{d-1})}$
such that 
\begin{align}
K(x,E) := \vartheta^{R_x}(E)= \vartheta\left(R_x^{-1}(E) \right), \quad x \in [0,1],\, E \in \mathcal{B}([0,1]^{d-1})
\end{align}
is a $1$-to-$(d-1)$-Markov kernel of $C$. Indicating the dependence on $\vartheta$ we will 
let $C_\vartheta \in \cd$ denote \FD{the} spatially homogeneous copula induced by $\vartheta$ and 
write $\cd^{sh}$ for the family of all $d$-dimensional spatially homogeneous copulas. 
It is straightforward to verify that considering $\vartheta=\lambda_{d-1}$ yields $C_\vartheta=\Pi_d$
and $\vartheta=\FD{\epsilon_{\mathbf{0}}}$ yields $C_\vartheta=M$.
Obviously the family $\cd^{sh}$ is convex - in order to show compactness we will use the 
subsequent lemma. 
\begin{lem}\label{lem:weak.to.sh}
Suppose that $(\vartheta_n)_{n \in \mathbb{N}}$ is a sequence of probability measures in 
$\mathcal{P}([0,1]^{d-1})$ that converges weakly to $\vartheta \in \mathcal{P}([0,1]^{d-1})$. 
Then the corresponding induced spatially homogeneous copulas $C_{\vartheta_n}$ converge to 
$C_{\vartheta}$ with respect to $D_1$ for $n \to \infty$. 
\end{lem}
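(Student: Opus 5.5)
The plan is to verify the sufficient criterion recalled in Section~2 from \cite{D1multi}. If $K_{C_{\vartheta_n}}(x,\cdot)$ converges weakly to $K_{C_\vartheta}(x,\cdot)$ for $\lambda$-almost every $x\in[0,1]$, then $D_1(C_{\vartheta_n},C_\vartheta)\to0$. So it suffices to show that, for every fixed $x$, the measures $\vartheta_n^{R_x}$ converge weakly to $\vartheta^{R_x}$.

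Fix $x\in[0,1)$. The map $R_x$ is not continuous on $[0,1]^{d-1}$, so the continuous mapping theorem cannot be applied directly. We use its version for maps that are continuous outside a $\vartheta$-null set. The discontinuity set of $R_x$ is
\[
\Delta_x=\{\mathbf{y}\in[0,1]^{d-1}:\ y_j=1-x \text{ for some } j\}.
\]
Here we use that $r_x$ is discontinuous only at $1-x$ (and, viewing the circle, possibly at the endpoints $0$ and $1$, which we add to the exceptional set). Hence the weak convergence $\vartheta_n^{R_x}\to\vartheta^{R_x}$ holds whenever $\vartheta(\Delta_x)=0$.

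The key step is to show that $\vartheta(\Delta_x)>0$ can hold for at most countably many $x$. Let $\vartheta_j$ denote the $j$-th one-dimensional marginal of $\vartheta$. Then
\[
\vartheta(\Delta_x)\le\sum_{j=2}^{d}\vartheta_j(\{1-x\}),
\]
and each $\vartheta_j$ has at most countably many atoms. Therefore $\vartheta(\Delta_x)=0$ for all but countably many $x$, and in particular for $\lambda$-almost every $x$. Combined with the previous step, this gives the required $\lambda$-a.e.\ weak convergence of the kernels, and the criterion from \cite{D1multi} yields $D_1$-convergence of $C_{\vartheta_n}$ to $C_\vartheta$.

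The main obstacle is the handling of the endpoint conventions. Since $r_a$ maps into $[0,1)$, the point $y=1$ is sent to $r_a(1)=a$, whereas points $y$ slightly below $1$ are sent close to $a$ as well; similarly one must check that $y\to(1-a)^-$ gives values near $1$ but $r_a(1-a)=0$. We would treat all of this by working on the circle $\mathbb{R}/\mathbb{Z}$, where each $r_a$ is a homeomorphism. Weak convergence transfers to the circle, and the only defect is the identification of $0$ and $1$ when mapping back to $[0,1]$. That defect is again confined to a set of the form $\{y_j\in\{1-x,0,1\}\}$, which has $\vartheta$-measure zero for all but countably many $x$ (those in the $\{0,1\}$ part are handled separately, since $\vartheta_j(\{0,1\})$ does not depend on $x$ after rotation).
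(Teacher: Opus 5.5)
Your argument is essentially the paper's proof: the $\lambda$-a.e.\ weak-convergence criterion for $D_1$, the continuous mapping theorem for $R_x$ with $\operatorname{Disc}(R_x)=\Delta_x$, and the bound $\vartheta(\Delta_x)\le\sum_{j=2}^{d}\vartheta^{\pi_j}(\{1-x\})$, which vanishes outside a countable set of $x$.

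Your final paragraph addresses a non-issue and should be dropped. For $x\in(0,1)$ one has $r_x(y)=y+x-1$ on $[1-x,1]$, so $r_x$ is continuous at $0$ and $1$, and $\Delta_x$ is already the full discontinuity set. Actually adding $\{y_j\in\{0,1\}\}$ to the exceptional set would break the argument whenever $\vartheta$ charges the boundary, because that mass does not depend on $x$; an example is $\vartheta=\epsilon_{\mathbf 0}$, which induces $M$.
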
 
\begin{proof}
We want to apply the Continuous Mapping theorem and proceed as follows: 
\FD{For every $i\in\{2,\ldots,d\}$ the one-dimensional marginal $\vartheta^{\pi_i}$ of $\vartheta$ has at most countably many atoms.} For every $x \in (0,1)$ the set $\operatorname{Disc}(R_x)$ of 
discontinuity points of $R_x$ obviously is of the form 
$$
\operatorname{Disc}(R_x) = \bigcup_{i=2}^d \left\{(x_2,\ldots,x_d) \in [0,1]^{d-1}: \, x_i=1-x \right\}.
$$ 
As a direct consequence \FD{$\vartheta(\operatorname{Disc}(R_x))\le\sum_{i=2}^d\vartheta^{\pi_i}(\{1-x\})=0$} \FD{holds} for all but at most 
countably many $x \in (0,1)$, and hence for $\lambda$-almost every $x \in (0,1)$.
Using Continuous Mapping theorem it therefore follows that for $\lambda$-almost every
$x \in (0,1)$ we have that $(\vartheta_n^{R_x})_{n \in \mathbb{N}}$ converges weakly to 
$\vartheta^{R_x}$. In other words: for $\lambda$-almost every $x \in (0,1)$ we have 
that $(K_{C_{\vartheta_n}}(x,\cdot))_{n \in \mathbb{N}}$ converges weakly to 
$K_{C_\vartheta}(x,\cdot)$. This shows $D_1$-convergence and completes the proof. 
\end{proof}
\begin{lem}\label{lem:main.cdsh}
$\cd^{sh}$ is a compact, convex subset of $(\cd,d_\infty)$ which contains infinitely many 
linearly independent elements. Moreover, $\cd^{sh}$ is a $Z$-set of $\cd$.
\end{lem}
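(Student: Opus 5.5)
Convexity of $\cd^{sh}$ has already been noted; it comes from the fact that $\vartheta\mapsto C_\vartheta$ preserves convex combinations. The kernel $\vartheta^{R_x}$ is affine in $\vartheta$, and so is $C_\vartheta(x,\mathbf{y})=\int_{[0,x]}\vartheta^{R_t}([\mathbf{0},\mathbf{y}])\,d\lambda(t)$.

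\emph{Compactness.} Take a sequence $(C_{\vartheta_n})$ in $\cd^{sh}$. Since $[0,1]^{d-1}$ is compact, $\mathcal{P}([0,1]^{d-1})$ is weakly sequentially compact, so some subsequence $\vartheta_{n_k}$ converges weakly to a $\vartheta$. By Lemma~\ref{lem:weak.to.sh}, $C_{\vartheta_{n_k}}\to C_\vartheta$ in $D_1$, hence in $d_\infty$. Thus $\cd^{sh}$ is a sequentially compact subset of $(\cd,d_\infty)$ and therefore compact.

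\emph{Linear independence.} I use Dirac measures. For $\mathbf{a}=(a,0,\ldots,0)\in[0,1)^{d-1}$ with $a\in(0,1)$, let $\vartheta_a=\epsilon_{\mathbf{a}}$. Then $C_{\vartheta_a}$ is completely dependent: its mass sits on the graph
\[
\{(x,r_x(a),r_x(0),\ldots,r_x(0)) : x\in[0,1]\}=\{(x,(x+a)\,\mathrm{mod}\,1,x,\ldots,x)\}.
\]
For pairwise different $a_1,\ldots,a_n$ these graphs $G_{a_i}$ are pairwise disjoint, except possibly for finitely many points, which carry no mass. So $\mu_{C_{\vartheta_{a_i}}}(G_{a_j})=\mathbf{1}_{\{i=j\}}$. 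Now suppose $\sum_i e_iC_{\vartheta_{a_i}}=0$. The associated signed measure vanishes, and evaluating it on $G_{a_j}$ gives $e_j=0$, exactly as in the proof of Lemma~\ref{lem:mini.infty}. (For $d=2$ the same works with $\vartheta_a=\epsilon_a$ on $[0,1]$.) Lemma~\ref{lem:dimension} then gives $\operatorname{ind}\cd^{sh}=\infty$.

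\emph{$Z$-set.} The ordinal sum map $C\mapsto \mathcal{OS}_b(C,W)$ is not available for $d\ge3$, so I would use a different perturbation. My first choice is $f_b(C):=\mathcal{OS}_b(C,M)$ with $b$ close to $1$. This map is continuous, and $d_\infty(f_b(C),C)\le d(1-b)$ by Lipschitz continuity. The task is to show $f_b(C)\notin\cd^{sh}$, which I expect to be the main obstacle, since it requires a structural property of spatially homogeneous copulas. Key observation: for $C_\vartheta$, the conditional law of $(Y_2,\ldots,Y_d)$ given $X=x$ is a rotation of the fixed law $\vartheta$. In particular the $\lambda$-a.e.\ defined map $x\mapsto K(x,\cdot)$ is ``translation-equivariant'', so the distribution of $\mathbf{Y}-(x,\ldots,x)\ (\mathrm{mod}\ 1)$ given $X=x$ does not depend on $x$. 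For $f_b(C)$ and $x\in(b,1)$, this shifted conditional law is $\epsilon_{\mathbf{0}}$. For $x<b$, however, $f_b(C)$ puts no mass on $\{y_2>b\}$, so the conditional law is supported in $[0,b]^{d-1}$. If it were $\epsilon_{\mathbf{0}}$ for a.e.\ $x$, then $f_b(C)=M$ would have to hold. So $f_b(C)$ fails to be spatially homogeneous unless $f_b(C)=M$, i.e.\ unless $C=M$. To handle the case $C=M$ as well, I would use $R=\Pi$ instead of $M$ in the ordinal sum. Then on $(b,1)$ the shifted conditional law is uniform on the rotated cube $[b,1]^{d-1}-(x,\ldots,x)$, and this set genuinely depends on $x$. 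Such $x$-dependence is impossible for $C_\vartheta$, where the shifted law is constant in $x$. This contradiction, written out carefully on a set of $x$ of positive measure, shows that $f_b(C)\in\cd\setminus\cd^{sh}$ for all $C$, and so $\cd^{sh}$ is a $Z$-set of $\cd$.
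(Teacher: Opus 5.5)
Your proposal is correct and follows the paper's proof essentially step by step. Compactness comes from weak sequential compactness of $\mathcal{P}([0,1]^{d-1})$ together with Lemma~\ref{lem:weak.to.sh}; linear independence comes from Dirac measures $\epsilon_{\mathbf a}$ whose induced copulas live on mutually disjoint graphs (you use the subfamily $\mathbf a=(a,0,\ldots,0)$); and the $Z$-set property comes from $C\mapsto\mathcal{OS}_b(C,\Pi)$, where the kernel on $(b,1)$ would force $\vartheta$ to be uniform on an $x$-dependent rotated cube. Your detour also correctly explains why the paper uses $\Pi$ rather than $M$: $\mathcal{OS}_b(M,M)=M\in\cd^{sh}$.
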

\begin{proof}
We show that every sequence in $(\cd^{sh},d_\infty)$ contains a convergent subsequence with 
limit in $\cd^{sh}$. For $C_1,C_2,\ldots \in \cd^{sh}$ there exist probability measures
$\vartheta_1,\vartheta_2\FD{,\ldots} \in \mathcal{P}([0,1]^{d-1})$ with $C_{\vartheta_i}=C_i$ for every 
$i \in \mathbb{N}$. Letting $m_W$ denote the Wasserstein (a.k.a. Hutchinson or 
Kantorovich-Rubinstein) metric on $\mathcal{P}([0,1]^{d-1})$, it is 
well-known that $m_W$ 
is a metrization of weak convergence and that $(\mathcal{P}([0,1]^{d-1}),m_W)$ is compact. 
As a direct consequence, $(\vartheta_n)_{n \in \mathbb{N}}$ has a weakly convergent subsequence 
$(\vartheta_{n_j})_{j \in \mathbb{N}}$ with limit $\vartheta \in \mathcal{P}([0,1]^{d-1})$.
Applying Lemma \ref{lem:weak.to.sh} directly yields 
$\lim_{\FD{j} \rightarrow \infty} D_1(C_{\vartheta_{n_j}},C_\vartheta)=0$, which implies 
$\lim_{\FD{j} \rightarrow \infty} d_\infty(C_{\vartheta_{n_j}},C_\vartheta)=0$. We have shown that every
sequence has a convergent subsequence, so $(\cd^{sh},d_\infty)$ is compact. \\
For every $\mathbf{b} \in [0,1)^{d-1}$ consider $C_{\delta_{\mathbf{b}}}$. Then 
for $\mathbf{a} \neq \mathbf{b}$ the corresponding $d$-stochastic measures 
$\mu_{\mathbf{a}}:=\mu_{C_{\delta_{\mathbf{a}}}}$ and 
$\mu_{\mathbf{b}}:=\mu_{C_{\delta_{\mathbf{b}}}}$ have disjoint support. 
Suppose now that 
$\mathbf{a}^1=(a^1_{\FD{2}},\ldots,a^1_d),\ldots,\mathbf{a}^n=(a^n_{\FD{2}},\ldots,a^n_d)$ are pairwise different elements in $[0,1)^{d-1}$ 
and assume that there are constants $e_1,\ldots,e_n \in \mathbb{R}$ such that we have 
$\sum_{i=1}^n e_i C_{\delta_{\mathbf{a}^i}} = \mathbf{0} \in C([0,1]^d)$. Then the 
finite signed measure $\nu$, given by 
$$
\nu(G):= \sum_{i=1}^n e_i \, \mu_{C_{\delta_{\mathbf{a}^i}}}(G),\quad G \in \mathcal{B}([0,1]^d)
$$  
is the zero measure, i.e., $\nu(G)=0$ holds for every $G \in \mathcal{B}([0,1]^d)$. 
Considering $G=\{(x,\FD{r_x}(a^1_2),\ldots,\FD{r_x}(a^1_d)): \, x \in [0,1]\}$ and using disjointness of the supports it follows that
\begin{align*}
0=\nu(G)= \sum_{i=1}^n e_i \, \mu_{\mathbf{a}^i}(G) = 
e_1 \mu_{\mathbf{a}^1}\FD{(G)} = e_1.
\end{align*}
Proceeding analogously eventually yields $e_1=e_2=\ldots=e_n=0$, implying that 
$C_{\delta_{\mathbf{a}^{\FD{1}}}},\ldots,C_{\delta_{\mathbf{a}^{\FD{n}}}}$ are linearly independent. 
Since $n$ was arbitrary, this shows that $\cd^{sh}$ contains infinitely many 
linearly independent elements and it remains to prove the last assertion.\\
We again work with ordinal sums and for every $a \in (0,1)$ and $C \in \cd$ consider 
$\mathcal{OS}_a(C,\Pi)$. Then $\mathcal{OS}_a(C,\Pi) \in \cd \setminus \cd^{sh}$ 
for every $C \in \cd$\FD{: for $\lambda$-a.e.\ $x\in(a,1)$ its kernel $K(x,\cdot)$ is the uniform distribution on $[a,1]^{d-1}$; if it were of the form $\vartheta^{R_x}$, then $\vartheta$ would be uniform on $R_x^{-1}([a,1]^{d-1})$, a set depending on $x$ --- a contradiction}.  
For arbitrary $\varepsilon > 0$, choosing $a \in (0,1)$ sufficiently close to $1$ we obtain   
$$
\sup_{C \in \cd} d_\infty \left(\mathcal{OS}_a(C,\Pi),C \right) < \varepsilon,
$$ 
which shows that $\cd^{sh}$ is a $Z$-set of $\cd$. 
\end{proof}

\begin{figure}[htbp]
  \centering
  \includegraphics[width=0.5\textwidth]{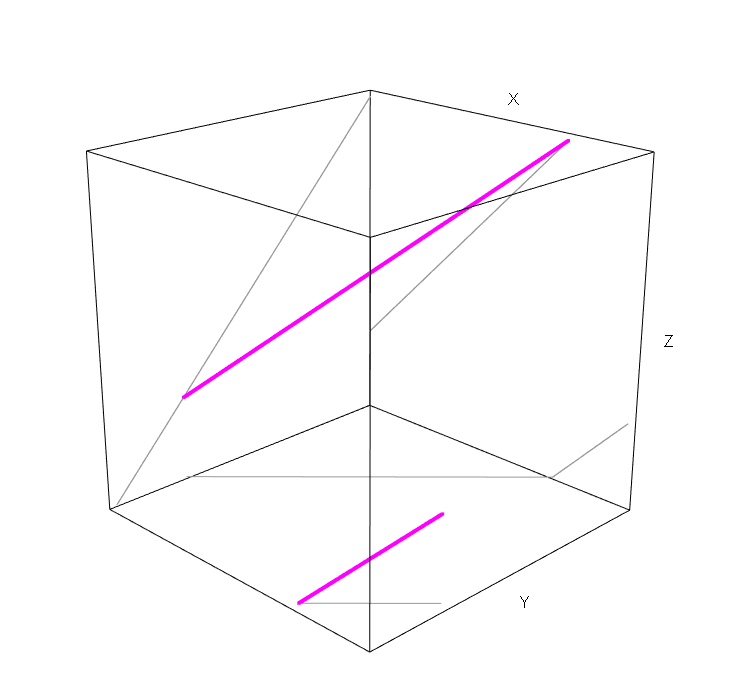}
  \caption{A sample of size $n=\FD{5{,}000}$ for the copula $C_{\delta_{\mathbf{a}}} \in \FD{\mathcal{C}_{3}^{sh}}$ 
  with $\mathbf{a}=(\frac{1}{4},\frac{1}{4})$ as considered in the proof of Lemma \ref{lem:main.cdsh} (magenta), plus all bivariate marginal samples (gray).}
  \label{fig:modulo2}
\end{figure}

Using Lemma \ref{lem:main.cdsh}, setting $n=2, A_1=\cd^{sh}, A_2=\cd$ and $N=C([0,1]^d)$, 
the following result directly follows from Theorem \ref{thm:main.tool}:
\begin{thm}
There exists a homeomorphism $h : [0,1] \times \mathcal{H} \to \cd$ with 
$h(\{0\} \times \mathcal{H})= \cd^{sh}$. Moreover, $\FD{\cd^{sh}}$ has the DC and the FP property and
 is a retract of $\cd$.
\end{thm}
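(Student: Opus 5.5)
The plan is to apply Theorem~\ref{thm:main.tool} with $n=2$, $A_1=\cd^{sh}$, $A_2=\cd$ and the normed space $N=C([0,1]^d)$ with $\Vert\cdot\Vert_\infty$. The work is therefore to check the hypotheses of that theorem, and each of them has been established (or nearly so) in Lemma~\ref{lem:main.cdsh}.

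First, $\cd$ is compact and convex, as recalled in Section~2. Lemma~\ref{lem:main.cdsh} says that $\cd^{sh}$ is a compact, convex subset of $\cd$ and that it contains infinitely many linearly independent elements, namely the copulas $C_{\epsilon_{\mathbf{a}}}$, $\mathbf{a}\in[0,1)^{d-1}$, which have pairwise disjoint supports. By the second assertion of Lemma~\ref{lem:dimension} this gives $\operatorname{ind}\cd^{sh}=\infty$. The same lemma also shows that $\cd^{sh}$ is a $Z$-set of $\cd$, via the ordinal sums $C\mapsto\mathcal{OS}_a(C,\Pi)$ with $a$ close to $1$. Two small points should be made explicit:
\begin{itemize}
\item The map $C\mapsto\mathcal{OS}_a(C,\Pi)$ is continuous with respect to $d_\infty$. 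This is clear from \eqref{eq:os}, since it is an affine rescaling of $C$ on $[0,a]^d$ and is constant in $C$ elsewhere.
\item The Lipschitz property of copulas gives the uniform bound $d_\infty(\mathcal{OS}_a(C,\Pi),C)\le d(1-a)$, or a bound of that order. Because $\cd$ is compact, a constant $\varepsilon$ suffices in the definition of a $Z$-set.
\end{itemize}

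With these hypotheses in place, Theorem~\ref{thm:main.tool} gives a homeomorphism $h:[0,1]\times\mathcal{H}\to\cd$ with $h(\{0\}\times\mathcal{H})=\cd^{sh}$. It also gives that $\cd^{sh}$ is homeomorphic to $\mathcal{H}$, which is Keller's Theorem~\ref{thm:keller} applied inside that theorem. From this, $\cd^{sh}$ has the DC and FP properties, and it is a retract of $\cd$ because $\{0\}\times\mathcal{H}$ is a retract of $[0,1]\times\mathcal{H}$.

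The only step that deserves care is the $Z$-set claim, specifically that $\mathcal{OS}_a(C,\Pi)$ is never spatially homogeneous. I would make that argument precise as follows.

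Suppose $\mathcal{OS}_a(C,\Pi)$ were equal to $C_\vartheta$. For $\lambda$-almost every $x\in(a,1)$ the kernels would agree, so
\[
\vartheta^{R_x}=\text{uniform distribution on }[a,1]^{d-1}.
\]
This means $\vartheta$ is the uniform distribution on $R_x^{-1}([a,1]^{d-1})$, which is the translate of $[a,1]^{d-1}$ by $-x$ modulo $1$ in each coordinate. Now take two such $x<x'$ with $x'-x$ small. The two translated boxes differ on a set of positive $\lambda_{d-1}$-measure, so the two uniform distributions cannot both equal $\vartheta$. This contradiction completes the argument, and with it the theorem.
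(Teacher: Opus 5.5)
Your proposal is correct and follows the paper's route: apply Theorem~\ref{thm:main.tool} with $n=2$, $A_1=\cd^{sh}$, $A_2=\cd$. Lemma~\ref{lem:main.cdsh} supplies compactness, convexity, $\operatorname{ind}\cd^{sh}=\infty$ and the $Z$-set property via $C\mapsto\mathcal{OS}_a(C,\Pi)$, and your comparison of the uniform laws on two distinct translated boxes $R_x^{-1}([a,1]^{d-1})$ and $R_{x'}^{-1}([a,1]^{d-1})$ is just a more explicit version of the paper's ``set depending on $x$'' argument.
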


\subsection{Multivariate $\alpha$-migrative copulas}

Following \cite{migrative,MesiarBustinceFernandez2010}, for given $\alpha \in (0,1)$ a 
copula $C \in \cd$ is called $\alpha$-migrative, if for all $i,j \in [d]$ and all 
$\mathbf{x}=(x_1,\ldots,x_d) \in [0,1]^d$ the following identity holds:
\begin{align}
C(x_1,\ldots,x_{i-1},\alpha x_i, x_{i+1},\ldots,x_d) = C(x_1,\ldots,x_{j-1},\alpha x_j, x_{j+1},\ldots,x_d)
\end{align}
We will let $\cd^{\alpha m}$ denote the family of all $d$-dimensional $\alpha$-migrative copulas. 
Obviously the product copula $\FD{\Pi_d}$ is $\alpha$-migrative for every $\alpha$, whereas for instance 
$M$ is easily verified to not be $\alpha$-migrative for any $\alpha \in (0,1)$.  
It is straightforward to verify that $\cd^{\alpha m}$ is a closed and convex subset of $\cd$. 
In this section we show that $\cd^{\alpha m}$ is a $Z$-set of $\cd$, that $\cd^{\alpha m}$ 
is homeomorphic to the Hilbert cube $\mathcal{H}$ and that there exists a homeomorphism 
$h: [0,1] \times \mathcal{H} \to \cd$ mapping $\{0\}\times \mathcal{H}$ to $\cd^{\alpha m}$. 
Doing so we first recall the handy characterization of $\cd^{\alpha m}$ going back to  
\cite{migrative}, formulate it in terms of the corresponding $d$-stochastic measures, and 
start with two notations: For every $i \in \{0,1,\ldots\}=\mathbb{N}_0$ set
$$
\Delta_i:=\left\{(k_1,\ldots,k_d) \in \mathbb{N}_0^d: \, \sum_{j=1}^d k_j=i\right\}
$$ 
and for every $\mathbf{k} \in \Delta_i$ define the affine (bijective) transformation 
$S_{\mathbf{k},i}: [0,1]^d \to \bigtimes_{j=1}^d [\alpha^{k_j+1},\alpha^{k_j}]$ 
coordinate-wise by
$$
\pi_j \circ S_{\mathbf{k},i}(x_1,\ldots,x_d)=\left(\alpha^{k_j+1} + x_j(\alpha^{k_j}-\alpha^{k_j+1}) \right),\quad j \in [d].
$$
According to \cite[Proposition 4.3]{migrative} the following two assertions are equivalent for
$C \in \cd$:
\begin{itemize}
\item $C \in \cd^{\alpha m}$
\item There are copulas $C_0,C_1,C_2,\ldots \in \cd$ such that the corresponding 
$d$-stochastic measures fulfill
\begin{align}\label{eq:formula.migrative}
\mu_C=(1-\alpha)^d \sum_{i=0}^\infty \alpha^i \sum_{\mathbf{k} \in \Delta_i} \mu_{C_i}^{S_{\mathbf{k},i}}.
\end{align}
\end{itemize}
In other words: considering the partition of $[0,1]^d$ into rectangles induced 
by $\{\alpha^l: l \in \mathbb{N}_0\}$ in each coordinate, a copula $C \in \cd$ is $\alpha$-migrative, 
if and only if $\mu_C$ is a patchwork-like construction consisting of affinely rescaled copies  
of $\mu_{C_i}$ in every rectangle of the form $\bigtimes_{j=1}^d [\alpha^{k_j+1},\alpha^{k_j}]$ with 
$\mathbf{k} \in \Delta_i$. In particular, for $i=0$ we have $\Delta_i=\{\mathbf{0}\}$, 
so for every Borel set $G \subseteq [\alpha,1]^d$ the measure $\mu_C$ fulfills 
$\mu_C(G)=(1-\alpha)^d \mu_{C_0}^{S_{\mathbf{0},0}}(G)$, implying that 
$C_0$ can be recovered from $C$ and every $\alpha$-migrative copula $C$ fulfills 
\begin{equation}\label{eq:upper.alpha.mig}
\mu_C\left([\alpha,1]^d\right)=(1-\alpha)^d\mu_{C_0}^{S_{\mathbf{0},0}}\left([\alpha,1]^d\right)=
(1-\alpha)^d\mu_{C_0}\left([0,1]^d\right)=(1-\alpha)^d
\end{equation}
Defining 
$\varphi: \cd \to \cd^{\alpha m}$ in terms of the corresponding $d$-stochastic measures by 
(all $C_i$ are equal to $C$) 
\begin{align}\label{eq:formula.migrative2}
\mu_{\varphi(C)}=(1-\alpha)^d \sum_{i=0}^\infty \alpha^i \sum_{\mathbf{k} \in \Delta_i} \mu_{C}^{S_{\mathbf{k},i}},
\end{align}  
it therefore follows that $\varphi$ is injective. Moreover, letting $C_b \in \cdus$ denote the copula
used in the proof of Lemma \ref{lem:mini.infty} it follows that for arbitrary 
$b_1,b_2 \in (0,1)$ we have that $d$-stochastic measures corresponding to 
the $\alpha$-migrative copulas $\varphi(C_{b_1}),\varphi(C_{b_2})$ 
have disjoint support \FD{(more precisely, they are mutually singular)}. Proceeding as in the proof of Lemma \ref{lem:mini.infty} and 
Lemma \ref{lem:main.cdsh} therefore yields that $\cd^{\alpha m}$ contains infinitely many 
linearly independent elements. Figure \ref{fig:migrative} depicts a sample of 
$\varphi(C_{\frac{1}{4}})$ for $\alpha=\frac{1}{2}$. \\
For every $a \in \FD{(\alpha,1)}$ and $C \in \cd$ consider 
$\mathcal{OS}_a(C,M) \in \cd$. Then it is straightforward to verify that 
$\mathcal{OS}_a(C,M) \in \cd \setminus \cd^{\alpha m}$ for every $C \in \cd$. \FD{In fact, $\mu_{\mathcal{OS}_a(C,M)}([a,1]^d)=1-a$, whereas for $D\in\cd^{\alpha m}$, by \eqref{eq:formula.migrative} with $b:=\frac{a-\alpha}{1-\alpha}$,
$$\mu_D([a,1]^d)=(1-\alpha)^d\mu_{D_0}([b,1]^d)\le(1-\alpha)^d(1-b)=(1-\alpha)^{d-1}(1-a)<1-a.$$
(The same argument works with $M$ replaced by any copula.)} \FD{Since for} 
arbitrary $\varepsilon > 0$, choosing $a \in (0,1)$ sufficiently close to $1$ we obtain   
$$
\sup_{C \in \cd} d_\infty \left(\mathcal{OS}_a(C,M),C \right) < \varepsilon,
$$ 
$\cd^{\alpha m}$ is a $Z$-set of $\cd$. Summing up, we have shown the following result:
\begin{lem}\label{lem:main.mig}
Let $\alpha \in (0,1)$ be arbitrary but fixed. Then $\cd^{\alpha m}$ is a compact, convex subset 
of $(\cd,d_\infty)$ which contains infinitely many 
linearly independent elements. Moreover, $\cd^{\alpha m}$ is a $Z$-set of $\cd$.
\end{lem}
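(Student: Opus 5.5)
The lemma collects facts that the preceding discussion only sketches, so I would verify them in order. The properties are closedness and convexity of $\cd^{\alpha m}$, infinitely many linearly independent elements, and the $Z$-set property.

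\emph{Closedness and convexity.} Convexity holds because the defining identities are linear in $C$: if $C,D$ satisfy $C(\ldots,\alpha x_i,\ldots)=C(\ldots,\alpha x_j,\ldots)$ for all $i,j$, so does every convex combination. Closedness holds because these identities are pointwise equalities, which survive uniform limits. Since $(\cd,d_\infty)$ is compact, $\cd^{\alpha m}$ is compact.

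\emph{Linear independence.} Take the copulas $C_b\in\cdus$ from Lemma \ref{lem:mini.infty}. Their measures are concentrated on the pairwise disjoint graphs $\Gamma(T_b)$, and $\mu_{C_b}(\Gamma(T_{b'}))=\mathbf{1}_{\{b=b'\}}$. Set
$$G_b:=\bigcup_{i\ge0}\bigcup_{\mathbf{k}\in\Delta_i}S_{\mathbf{k},i}(\Gamma(T_b)).$$
The maps $S_{\mathbf{k},i}$ are affine bijections onto boxes that overlap only in boundary sets. These boundary sets are null for every $\mu_{C_b}^{S_{\mathbf{k},i}}$, since the univariate marginals of $C_b$ are uniform. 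Hence \eqref{eq:formula.migrative2} gives $\mu_{\varphi(C_b)}(G_{b'})=\mathbf{1}_{\{b=b'\}}$. Now suppose $\sum_l e_l\varphi(C_{b_l})=0$. Then the signed measure $\sum_l e_l\mu_{\varphi(C_{b_l})}$ vanishes, because a function in $C([0,1]^d)$ determines its induced signed measure. Evaluating it on $G_{b_1},G_{b_2},\ldots$ yields $e_1=e_2=\cdots=0$. Lemma \ref{lem:dimension} then gives $\operatorname{ind}\cd^{\alpha m}=\infty$. One should also check that $\varphi(C)$ really is a copula. This follows from \cite[Proposition 4.3]{migrative}, or directly from the uniformity of the one-dimensional marginals:
$$(1-\alpha)^d\sum_i\alpha^i\,\#\{\mathbf{k}\in\Delta_i:k_1=m\}\cdot(\text{lengths})$$
sums correctly. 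I would cite rather than recompute.

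\emph{$Z$-set property.} Use $f_a(C)=\mathcal{OS}_a(C,M)$ with $a\in(\alpha,1)$. The map $f_a$ is continuous, since the ordinal sum is affine in $C$. It also satisfies $d_\infty(f_a(C),C)\le 3(1-a)$, by the Lipschitz argument used in Lemma \ref{lem:schur}. The key step, which I expect to be the main obstacle, is showing that $f_a(C)\notin\cd^{\alpha m}$. On one side, $\mu_{f_a(C)}([a,1]^d)=1-a$. On the other, for $D\in\cd^{\alpha m}$, the set $[a,1]^d$ lies in $[\alpha,1]^d$, where $\mu_D$ is $(1-\alpha)^d\mu_{D_0}^{S_{\mathbf{0},0}}$. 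Pulling back gives $(1-\alpha)^d\mu_{D_0}([b,1]^d)$ with $b=\frac{a-\alpha}{1-\alpha}$. This is at most $(1-\alpha)^d(1-b)=(1-\alpha)^{d-1}(1-a)$, which is $<1-a$ because $d\ge2$.

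Choosing $a$ close to $1$ makes $f_a$ arbitrarily close to the identity while mapping $\cd$ into $\cd\setminus\cd^{\alpha m}$. By compactness this suffices for the $Z$-set property.
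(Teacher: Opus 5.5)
Your proposal is correct and follows essentially the same route as the paper. Closedness and convexity come from the linear defining identities, linear independence comes from $\varphi(C_b)$ with $C_b$ from Lemma \ref{lem:mini.infty} together with mutual singularity (your sets $G_b$ make the paper's remark explicit), and the $Z$-set property comes from $\mathcal{OS}_a(C,M)$ and the mass comparison $1-a$ versus $(1-\alpha)^{d-1}(1-a)$ on $[a,1]^d$. One small correction: the bound $d_\infty(f_a(C),C)\le 3(1-a)$ is only the $d=2$ constant. For $C=\Pi_d$ at $(a,\ldots,a)$ the distance is $a-a^d\approx(d-1)(1-a)$, which exceeds $3(1-a)$ for $d\ge5$ and $a$ near $1$; the same Lipschitz argument gives $(d+1)(1-a)$ in general, which is all the argument needs.
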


Using Lemma \ref{lem:main.mig}, setting $n=2, A_1=\cd^{\alpha m}, A_2=\cd$ and $N=C([0,1]^d)$, 
the following result directly follows from Theorem \ref{thm:main.tool}:
\begin{thm}
Let $\alpha \in (0,1)$ be arbitrary but fixed. Then 
\FD{there} exists a homeomorphism $h : [0,1] \times \mathcal{H} \to \cd$ with 
$h(\{0\} \times \mathcal{H})= \cd^{\alpha m}$. Moreover, $\cd^{\alpha m}$ has the DC and the FP property and
 is a retract of $\cd$.
\end{thm}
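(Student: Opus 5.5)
The plan is to apply Theorem \ref{thm:main.tool} with $n=2$, $A_1=\cd^{\alpha m}$, $A_2=\cd$ and $N=C([0,1]^d)$. Only its hypotheses need checking, and they are exactly the content of Lemma \ref{lem:main.mig}.

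First, $\cd^{\alpha m}$ and $\cd$ are convex and compact subsets of the normed space $(C([0,1]^d),\Vert\cdot\Vert_\infty)$, with $\cd^{\alpha m}\subset\cd$.

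Second, $\operatorname{ind}\cd^{\alpha m}=\infty$. Lemma \ref{lem:main.mig} gives infinitely many linearly independent elements in the convex set $\cd^{\alpha m}$, namely $\varphi(C_b)$ for $b\in(0,1)$, which are pairwise mutually singular. Lemma \ref{lem:dimension} then yields infinite small inductive dimension.

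Third, $\cd^{\alpha m}$ is a $Z$-set of $\cd$, again by Lemma \ref{lem:main.mig}. The witnessing maps are $C\mapsto\mathcal{OS}_a(C,M)$ with $a$ close to $1$. They are continuous, lie uniformly within $3(1-a)$ of the identity, and avoid $\cd^{\alpha m}$ by the mass comparison on $[a,1]^d$ via \eqref{eq:upper.alpha.mig}.

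With all hypotheses in place, Theorem \ref{thm:main.tool} provides a homeomorphism $h:[0,1]\times\mathcal{H}\to\cd$ with $h(\{0\}\times\mathcal{H})=\cd^{\alpha m}$. The same theorem gives that $\cd^{\alpha m}$ has the DC and FP properties, since it is homeomorphic to $\mathcal{H}$. It also gives that $\cd^{\alpha m}$ is a retract of $\cd$: transport the retraction $[0,1]\times\mathcal{H}\to\{0\}\times\mathcal{H}$, $(t,\mathbf{x})\mapsto(0,\mathbf{x})$, through $h$.

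The only step with real content is the $Z$-set property, specifically the claim that $\mathcal{OS}_a(C,M)\notin\cd^{\alpha m}$. This rests on the bound $\mu_D([a,1]^d)\le(1-\alpha)^{d-1}(1-a)<1-a$ for every $D\in\cd^{\alpha m}$, which is taken from the structure formula \eqref{eq:formula.migrative}. Since the paper already settled this before the lemma, the theorem itself is a direct corollary.
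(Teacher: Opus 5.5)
Your proposal is correct and is exactly the paper's argument: you feed Lemma~\ref{lem:main.mig} (convexity, compactness, infinitely many linearly independent elements, and the $Z$-set property via $C\mapsto\mathcal{OS}_a(C,M)$) into Theorem~\ref{thm:main.tool} with $n=2$, $A_1=\cd^{\alpha m}$, $A_2=\cd$. One slip is cosmetic: $3(1-a)$ is the bivariate constant, and in dimension $d$ the uniform bound is of the form $(2d-1)(1-a)$ (for $C=\Pi_d$ at $(a,\ldots,a)$ the distance is $a-a^d\approx(d-1)(1-a)$), which is harmless since only the uniform vanishing as $a\to1$ is needed.
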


\begin{figure}[h!]
  \centering
  \includegraphics[width=0.8\textwidth]{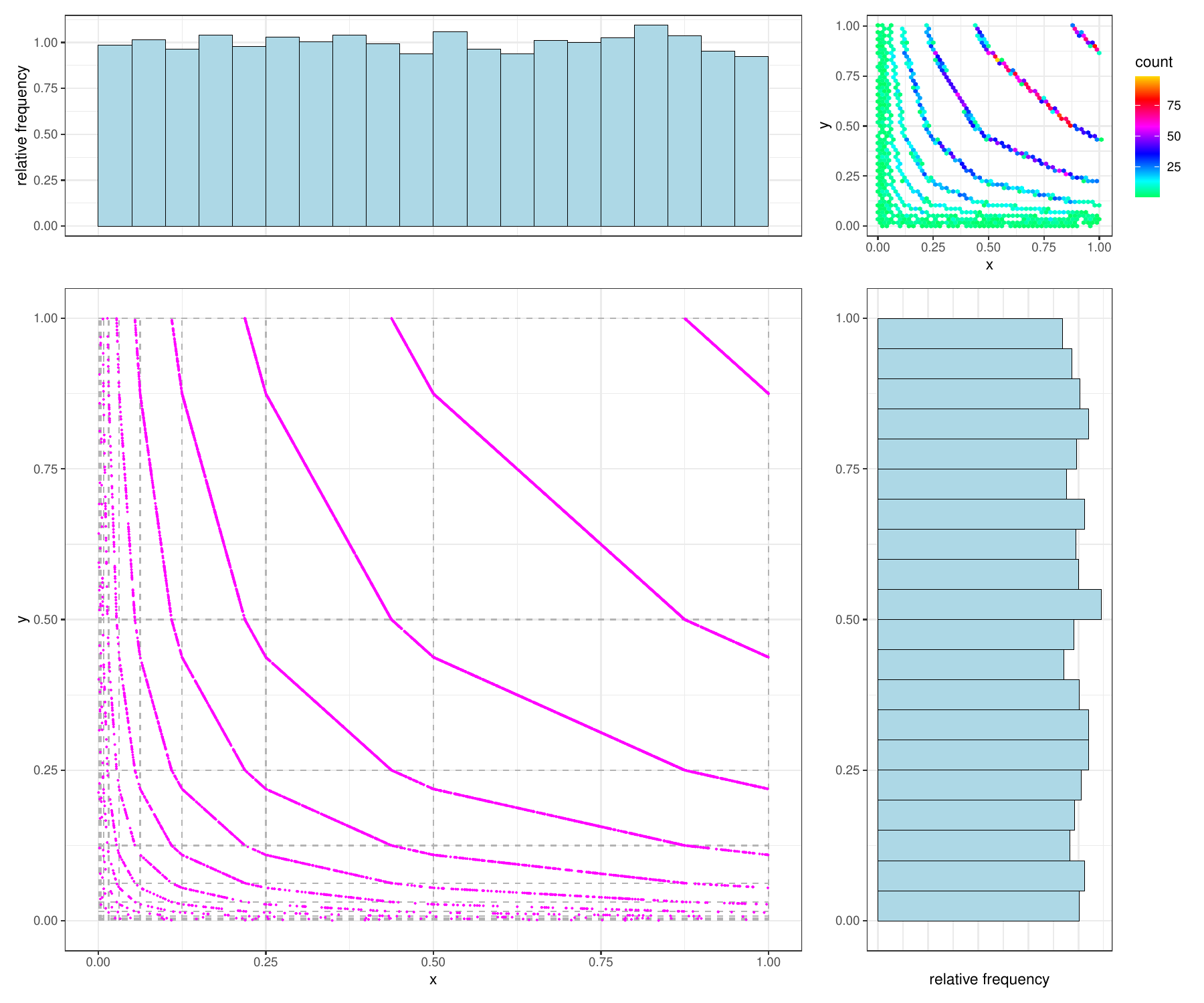}
  \caption{A sample of size $n=\FD{10{,}000}$ for the $\alpha$-migrative 
  copula $\varphi(C_{b})$ (upper left panel, magenta) with $C_b \in \FD{\hat{\mathcal{C}}_3^{\Pi_2}}$, 
  $b=\frac{1}{4}$, and $\alpha=\frac{1}{2}$, used for showing 
  that $\cd^{\alpha m}$ contains infinitely \FD{many linearly independent elements}; 
  bivariate marginal samples (blue) and hexagonal heatmap of the sample (upper right panel).}
  \label{fig:migrative}
\end{figure}

\section{Conclusions}
In this paper we have studied numerous wekk-known families of bivariate and multivariate copulas and 
quasi-copulas from the viewpoint of infinite-dimensional topology. Building upon the 
characterizations of the Hilbert cube due to Keller and Toru\'nczyk and on the $Z$-set 
unknotting theorem, Theorem~\ref{thm:main.tool} provides a simple and flexible tool which 
we applied to families defined via diagonal sections, lower semilinear, biconic, diagonal, 
Bertino and extreme-value copulas, (weakly) Schur-concave copulas, copulas with independent 
$(d-1)$-dimensional marginals, exchangeable, spatially homogeneous and $\alpha$-migrative copulas, 
as well as to the full families $\cd$ and $\qd$. In all cases the subfamily under consideration 
turned out to be homeomorphic to $\mathcal{H}$ and, at the same time, topologically negligible 
in the ambient family in the sense of $Z$-sets. In other words, from the topological point of 
view all these families are as rich as the whole class of copulas, whereas their position inside 
the ambient family is that of a face $\{0\}\times\mathcal{H}$ of $[0,1]\times\mathcal{H}$.

A direct consequence of the obtained homeomorphisms is worth mentioning: whenever 
$h:[0,1]\times\mathcal{H}\to\Omega$ is a homeomorphism with $h(\{0\}\times\mathcal{H})=A$, the 
complement $\Omega\setminus A$ is homeomorphic to $(0,1]\times\mathcal{H}$. For instance, the 
family $\qd\setminus\cd$ of all proper $d$-dimensional quasi-copulas and the family of all 
non-exchangeable copulas are homeomorphic to $(0,1]\times\mathcal{H}$, i.e., to the Hilbert cube 
with one point removed.

\section*{Acknowledgements}
\noindent The third author gratefully acknowledges the support of the WISS project 
‘IDA Lab Salzburg’ (20102/F2300464-KZP; 20204-WISS/225/348/3-2023).


\end{document}